\documentclass{amsart}

\usepackage{amsmath,amsfonts,amssymb,amscd,verbatim
}

\def\Q{{\mathbb Q}}
\def\Z{{\mathbb Z}}
\def\C{{\mathbb C}}
\def\R{{\mathbb R}}

\def\ib{{\mathbf i}}

                    \def\G{{\mathfrak G}}

\def\SB{{\mathbf S}}

            \def\mult{\mathrm{mult}}
            \def\multB{\mathbf{mult}}

\def\Hol{\mathrm{Aut}}
      
      \def\UU{\mathrm{U}}

\def\Aut{\mathrm{Aut}}
\def\Hom{\mathrm{Hom}}

                                                     \def\eb{\mathrm{e}}

\def\LL{\mathcal{L}}
\def\LB{\mathbb{L}}

\def\IM{\mathrm{Im}}
\def\UU{\mathrm{U}}

\def\Pic{\mathrm{Pic}}
                                 \def\Bim{\mathrm{Bim}}

\def\dim{\mathrm{dim}}

                                   \def\AA{{\mathcal A}}
                                   \def\BB{{\mathcal B}}

                                          \def\Pb{\mathbb{P}}

                                  \def\HH{{\mathbf H}}
                                  \def\EE{{\mathbf E}}

                           \def\uu{{\mathfrak u}}

\newtheorem{thm}{Theorem}[section]
\newtheorem{lem}[thm]{Lemma}
\newtheorem{cor}[thm]{Corollary}

\theoremstyle{definition}
\newtheorem{defn}[thm]{Definition}
\newtheorem{ex}[thm]{Example}

\newtheorem{sect}[thm]{}

           \newtheorem{rem}[thm]{Remark}

\newtheorem{rems}[thm]{Remarks}

\begin{document}\date{}

\title[Complex tori]{Complex tori, theta groups and their Jordan properties}
\author {Yuri G. Zarhin}

\address{Pennsylvania State University, Department of Mathematics, University Park, PA 16802, USA}

\email{zarhin@math.psu.edu}
\thanks{The  author  was partially supported by Simons Foundation Collaboration grant   \# 585711.
Part of this work was done in May-July 2018 when he visited
 the Max Planck Institut f\"ur Mathematik (Bonn, Germany), whose hospitality and support are gratefully acknowledged.}

\subjclass[2010]{14E07; 14K05, 32J18}

\keywords{complex tori, theta groups, Jordan properties}
\begin{abstract}
 We prove that an analogue of Jordan's theorem on finite subgroups of general linear groups does {\sl not} hold for the group of bimeromorphic
  automorphisms of a product of the complex projective line and a complex torus of {\sl positive} algebraic dimension.
\end{abstract}

\maketitle

\section{Introduction}
\begin{sect}
As usual, $\C\Pb^1$ denotes the {\sl complex projective line}. 
 Recall that a group $G$ is called  {\sl Jordan} (V.L. Popov  \cite{Popov}) if there exists a positive integer $J$ that enjoys the following properties. If $B$ is a finite subgroup of $G$ then there exists an abelian normal subgroup $\mathbb{A}$ of $B$ such that the index $[B:\mathbb{A}]\le J$. If this is the case then such a {\sl smallest} $J$ is called the {\sl Jordan constant} of $G$ and denoted by $J_G$; otherwise, we say that $G$ is {\sl not} Jordan and its Jordan constant is $\infty$. V.L. Popov \cite{Popov2} proved that every complex or real Lie group with finitely many connected components is Jordan. (His result also covers the case when the group of connected components is {\sl bounded} \cite{Popov2}.)

If $Z$ is a connected complex  manifold then we write $\Bim(Z)$ for its group of bimeromorphic automorphisms and $\Hol(Z)$ for its subgroup of all biholomorphic automorphisms of $Z$.
Jordan properties of $\Aut(Z)$ and  $\Bim(Z)$ when $Z$ is a compact complex manifolds have been studied recently by Sh. Meng and D.-Q. Zhang \cite{MZ} and Yu. Prokhorov and C. Shramov \cite{PS1,PS2}. In particular, Prokhorov and Shramov have classified all the surfaces with non-Jordan $\Bim$. (The case of projective surfaces was done earlier by V.L. Popov and the author in \cite{Popov,ZarhinE}. See also \cite{Popov2} where Jordan properties of the groups of biholomorphic automorphisms for certain compact and non-compact complex manifolds have been studied.)

The aim of this paper is to study Jordan properties of $\Hol(Y)$ and $\Bim(Y)$ where $Y$ are certain $\C\Pb^1$-bundles over {\sl complex tori}. Recall \cite{Mumford} that a complex compact manifold $X$ is
 a complex torus if it is (biholomorphic to)  a connected {\sl compact complex Lie group}. (Such a group is always {\sl commutative} \cite{Mumford}.) It is  known \cite[Ch. 2, Sect. 6]{BL} that the {\sl algebraic dimension} $\dim_a(X)$ of $X$ is positive if and only if $X$ admits as a quotient-torus a 
{\sl positive-dimensional} complex abelian variety. If $x \in X$ then we write  $T_x \in \Hol(X)$ for the {\sl translation map} 
\begin{equation}
T_x: X \to X, \ z \mapsto z+x \ \forall z \in X.
\end{equation}
Clearly, all $T_x$'s constitute a commutative subgroup  in $\Hol(X)$, because 
$$T_x\circ T_y=T_{x+y} \ \forall x,y \in X.$$
If $\mathcal{V}$ a holomorphic vector bundle over $X$ then for for each $\lambda\in \C^{*}$ we write
$$\mult(\lambda)=\mult_{\mathcal{V}}(\lambda) \in \Hol(\mathcal{V})$$ for the holomorphic automorphism of the total space of $\mathcal{V}$ that acts
as multiplication by $\lambda$ in every fiber. The map
\begin{equation}
\label{multV}
\mult=\mult_{\mathcal{V}}:\C^{*} \to  \Hol(\mathcal{V}), \ \lambda \mapsto \mult_{\mathcal{V}}(\lambda)
\end{equation}
is an {\sl injective group homomorphism}. We write $\Hol_0(\mathcal{V})$ for the centralizer of $\mult_{\mathcal{V}}(\C^{*})$ in  $\Hol(\mathcal{V})$.
Clearly, $\Hol_0(\mathcal{V})$ is a subgroup of $\Hol(\mathcal{V})$ that contains $\mult_{\mathcal{V}}(\C^{*})$.

We write $\mathbf{1}_X$ for the trivial holomorphic line bundle $X \times \C$ on $X$. If $\mathcal{L}$ is a holomorphic line bundle over $X$ then we write $\mathcal{L}_x$ for its fiber over $x\in X$ and $Y_{\mathcal{L}}$ for the  $\C\Pb^1$-bundle over $X$ that is the projectivization $\Pb( \mathcal{L}\oplus \mathbf{1}_X)$ of the rank $2$ vector bundle $\mathcal{L}\oplus \mathbf{1}_X$ over $X$. 
\end{sect}

\begin{ex}
\label{rP1}
If $\mathcal{L}=\mathbf{1}_X$ then 
$Y_{\mathcal{L}}=\Pb(\mathbf{1}_X\oplus \mathbf{1}_X)=X\times \C\Pb^1$.
\end{ex}

\begin{sect}
\label{SK}
Let $\LL$ be a holomorphic line bundle over $X$. We write $K(\LL)$ for the set of all $x \in X$ such that $\LL$ is isomorphic to the
{\sl induced} holomorphic line bundle $T_x^{*}\LL$ on $X$. It is  known \cite[pp. 7-8]{Kempf} that $K(\LL)$ is a subgroup of $X$
that coincides with the kernel of a certain holomorphic Lie group homomorphism from $X$ to the dual torus of $X$. This implies
that $K(\LL)$ is a closed (hence compact) complex commutative Lie subgroup in $X$ and therefore has  finitely many connected components. We write
$K(\LL)^{0}$  for the {\sl identity component} of $K(\LL)$; by definition, $K(\LL)^{0}$  is a complex subtorus in $X$,
$$K(\LL)^{0}\subset K(\LL)\subset X;$$
the compactness of $K(\LL)$ implies that the quotient $K(\LL)/K(\LL)^{0}$ is a finite commutative group.

 Let 
us consider the subgroup 
$\mathbf{S}(\LL) \subset \Hol(\LL)$ of all holomorphic automorphisms $\mathfrak{u}$ of the total space of $\LL$ that enjoy the following properties.

\begin{itemize}
\item[(i)]
There exists $x\in X$ such that $\mathfrak{u}: \LL \to \LL$ is a {\sl lifting} of $T_x:X \to X$, i.e., the following diagram is commutative.
\begin{equation}\label{diagram}
\begin{CD}
\LL@>{\uu}>>\LL \\
@V{}VV@V{}VV\\
X@>{T_x}>>X
\end{CD}\end{equation}

\item[(ii)]
For each $z \in X$ the map between the fibers of  $\LL$ over $z$ and $z+x$ induced by $\mathfrak{u}$ is a linear
isomorphism of one-dimensional $\C$-vector spaces.
\end{itemize}
By definition,
\begin{equation}
\label{multSBAut0}
\mult_{\LL}(\C^{*})\subset \mathbf{S}(\LL)\subset\Hol_0(\LL).
\end{equation}
There is a natural group homomorphism 
$$\rho=\rho_{\LL}: \mathbf{S}(\LL)\to X$$ that sends $\mathfrak{u}$ to $x$
if $\mathfrak{u}$ is a lifting of $T_x$. Clearly, $\ker(\rho_{\LL})=\mult_{\LL}(\C^{*})\cong \C^{*}$. This means that $\mathbf{S}(\LL)$ is included in an exact sequence of groups
\begin{equation}
\label{firstexact}
1 \to \C^{*}\overset{\mult_{\LL}}{\longrightarrow}\mathbf{S}(\LL)\overset{\rho_{\LL}}{\longrightarrow} X.
\end{equation}
\end{sect} 

\begin{rem}
\label{invariance}
Let $\psi: \LL_1 \cong \LL_2$ be an isomorphism of holomorphic line bundles $\LL_1$ and $\LL_2$ over $X$. Then 
$K(\LL_1)=K(\LL_2)$, and
all the isomorphisms between $\LL_1$ and $\LL_2$  are of the form 
$ \mult_{\LL_2}(c)\ \psi=\psi \ \mult_{\LL_1}(c)$ where $c$ runs through $\C^{*}$. This implies that the induced by $\psi$ group isomorphism
\begin{equation}
\label{isoSB}
\psi_{\SB}: \SB(\LL_1) \cong \SB(\LL_2), \ \uu \mapsto \psi \uu\psi^{-1}
\end{equation}
does {\sl not} depend on a choice of $\psi$. In addition, 
\begin{equation}
\label{SBcL1L2}
\psi_{\SB}(\mult_{\LL_1}(c))= \mult_{\LL_2}(c) \ \forall  c\in \C^{*}
\end{equation}
and 
$\psi_{\SB}$ may be extended to the commutative diagram
\begin{equation}\label{D}
\begin{aligned}
&&&\SB(\LL_1)                &\stackrel{\psi_{\SB}}\longrightarrow&     &{}           \SB(\LL_2)&&&\\
&&&\rho_{\LL_1}\searrow&                                                     &\swarrow  & \rho_{\LL_2}&&&\\
&&&                               &                    X        {}              &       &                                  &&&
\end{aligned}.
\end{equation}

\end{rem}

In what follows  we write $\#(C)$ for the number of  elements of a finite set $C$. 
By a short exact sequence of complex (resp. real) Lie groups we mean a short exact sequence of groups, each of which is a
 complex (resp. real) Lie group and all the homomorphisms involved are homomorphisms of corresponding complex (resp. real) Lie groups. We do not assume these groups to be connected or to have finitely many connected components.
 
The following assertion  
was inspired by results of D. Mumford \cite[Sect. 23]{Mumford}, who dealt with   abelian varieties.

\begin{thm}
\label{pi0}
If $\LL$ is any holomorphic line bundle over $X$ then
 the group $\SB(\LL)$ carries the natural  structure of a complex Lie group  that 
enjoys the following properties.
\begin{itemize}
\item[(0)] 
The action map
$\SB(\LL)\times \LL \to \LL, \ (\uu,\mathfrak{l})\mapsto \uu(\mathfrak{l}) \ \forall \uu\in \SB(\LL), \mathfrak{l}\in \LL$
of $\SB(\LL)$ on the total space of $\LL$
is holomorphic.
\item[(i)]   $\rho_{\LL}(\mathbf{S}(\LL))=K(\LL)$  and
the short exact sequence of groups induced by \eqref{firstexact}
\begin{equation}
\label{Sexact}
1 \to \C^{*} \overset{\mult_{\LL}}{\to} \mathbf{S}(\LL) \overset{\rho_{\LL}}{\longrightarrow} K(\LL) \to 0
\end{equation}
is a short exact sequence of  complex Lie groups.
\item[(ii)]
Let us consider  
 the preimage
$\mathbf{S}(\LL)^{0}:=\rho_{\LL}^{-1}(K(\LL)^{0})\subset \mathbf{S}(\LL)$, 
 which is a normal clopen complex Lie subgroup
of finite index $\#\left(K(\LL)/K(\LL)^{0}\right)$ in $\mathbf{S}(\LL)$. 
Then  $\mathbf{S}(\LL)^{0}$ is the identity component and the center of $\mathbf{S}(\LL)$.
In particular, 
$\mathbf{S}(\LL)$  is commutative if and only if $K(\LL)$ is connected.
\item[(iii)] If $\psi: \LL \to \LL^{\prime}$ is an isomorphism of holomorphic vector bundles over $X$ then
$\psi_{\SB}:\SB(\LL) \cong \SB(\LL^{\prime})$ defined in \eqref{isoSB} is an isomorphism of complex Lie groups.
\end{itemize}
\end{thm}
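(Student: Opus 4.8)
The plan is to realize $\SB(\LL)$ concretely as a holomorphic principal $\C^{*}$-bundle over $K(\LL)$ and to read off all four assertions from that structure. The starting observation is that a fibrewise-linear lifting $\uu$ of $T_x$ is exactly the datum of an isomorphism of holomorphic line bundles $\LL\cong T_x^{*}\LL$: an element $\uu\in\SB(\LL)$ over $T_x$ sends $\LL_z$ linearly onto $\LL_{z+x}$ for all $z$, which is precisely a global nowhere-vanishing section of $\Hom(\LL,T_x^{*}\LL)=\LL^{-1}\otimes T_x^{*}\LL$. Such a section exists iff $x\in K(\LL)$, giving $\rho_{\LL}(\SB(\LL))=K(\LL)$ at once. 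Moreover, since $X$ is compact and connected, the only line-bundle automorphisms of $\LL$ are the scalars $\C^{*}$ (a global holomorphic function on $X$ is constant and here nowhere vanishing), so two liftings of the same $T_x$ differ by a unique $c\in\C^{*}$; this identifies each fibre $\rho_{\LL}^{-1}(x)$ with a $\C^{*}$-torsor and shows $\mult_{\LL}(\C^{*})=\ker\rho_{\LL}$. A direct fibrewise check (linearity of the liftings) shows $\mult_{\LL}(\C^{*})$ is central.

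To manufacture the complex structure I would globalize the family of isomorphisms $\LL\cong T_x^{*}\LL$. Let $m\colon X\times X\to X$ be addition, $p_1,p_2$ the projections, and set $P:=m^{*}\LL\otimes p_1^{*}\LL^{-1}$, whose restriction to the slice $X\times\{x\}$ is $T_x^{*}\LL\otimes\LL^{-1}$, trivial exactly when $x\in K(\LL)$. Restricting $P$ to $X\times K(\LL)$ gives a holomorphic line bundle that is trivial on every slice. Here I invoke the analytic form of the seesaw principle via Grauert's base-change and semicontinuity theorems: $p_{2*}\bigl(P|_{X\times K(\LL)}\bigr)$ is a line bundle $\mathcal{M}$ on $K(\LL)$ (the slice cohomology has constant rank $h^{0}(X,\Oc_X)=1$) and the natural map identifies $P|_{X\times K(\LL)}\cong p_2^{*}\mathcal{M}$. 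Over a small $U\subset K(\LL)$ where $\mathcal{M}$ is trivial I thereby obtain a nowhere-vanishing section, i.e.\ a holomorphic family of isomorphisms, realized as a holomorphic map $s\colon U\times\LL\to\LL$, $(x,\mathfrak{l})\mapsto s(x)(\mathfrak{l})$, that is a local section of $\rho_{\LL}$. The charts $U\times\C^{*}\to\rho_{\LL}^{-1}(U)$, $(x,c)\mapsto \mult_{\LL}(c)\,s(x)$, have holomorphic transition functions (ratios of nowhere-vanishing sections), and they make $\rho_{\LL}$ a holomorphic principal $\C^{*}$-bundle. Holomorphy of the action map in (0) is then immediate from the formula $(x,c,\mathfrak{l})\mapsto c\cdot s(x)(\mathfrak{l})$; holomorphy of multiplication and inversion follows because $s(x)s(y)=\mult_{\LL}\!\bigl(f(x,y)\bigr)\,s'(x{+}y)$ with $f$ a holomorphic $\C^{*}$-valued cocycle, proving that (Sexact) is a short exact sequence of complex Lie groups, which is (i).

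For (ii), since $K(\LL)^{0}$ is clopen in $K(\LL)$, the preimage $\rho_{\LL}^{-1}(K(\LL)^{0})$ is clopen; being a $\C^{*}$-bundle over the connected $K(\LL)^{0}$ it is connected, hence equals the identity component, and $[\SB(\LL):\SB(\LL)^{0}]=\#\!\left(K(\LL)/K(\LL)^{0}\right)$. The centrality statement I would handle through the commutator pairing: for $\uu,\vv\in\SB(\LL)$ the commutator $[\uu,\vv]$ lifts $\mathrm{id}$, so it lies in $\C^{*}$ and, by centrality of $\mult_{\LL}(\C^{*})$, descends to a holomorphic bimultiplicative pairing $e\colon K(\LL)\times K(\LL)\to\C^{*}$. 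For fixed $y$ the map $e(\cdot,y)$ restricted to the \emph{compact} complex torus $K(\LL)^{0}$ is a holomorphic map into $\C^{*}\subset\C$, hence constant and equal to $1$; thus $K(\LL)^{0}$ lies in the radical of $e$, which is exactly the assertion that $\SB(\LL)^{0}$ is central. The reverse inclusion, that the center is no larger, amounts to the \emph{nondegeneracy} of the induced pairing on the finite group $K(\LL)/K(\LL)^{0}$, and this I expect to be the main obstacle. I would establish it from the Appell--Humbert description $X=V/\Lambda$ with $\LL$ attached to a Hermitian form $H$ ($E=\operatorname{Im}H$ integral on $\Lambda$): then $K(\LL)^{0}$ corresponds to the null space of $H$ and $e$ to $\exp(2\pi i E)$, and one checks that $E$ induces a nondegenerate alternating form on $K(\LL)/K(\LL)^{0}$. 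Given this, $Z(\SB(\LL))=\SB(\LL)^{0}$, and the equivalence ``$\SB(\LL)$ commutative $\iff K(\LL)$ connected'' follows formally from $\SB(\LL)^{0}=\rho_{\LL}^{-1}(K(\LL)^{0})$.

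Finally, (iii) is essentially bookkeeping on top of Remark \ref{invariance}: conjugation by a line-bundle isomorphism $\psi\colon\LL\to\LL'$ is a biholomorphism of total spaces, it already yields the group isomorphism $\psi_{\SB}$ independently of $\psi$, and it carries the local holomorphic sections $s$ for $\LL$ to local holomorphic sections $\psi s\psi^{-1}$ for $\LL'$ over the same open set of $K(\LL)=K(\LL')$; hence $\psi_{\SB}$ and its inverse are holomorphic, i.e.\ $\psi_{\SB}$ is an isomorphism of complex Lie groups. In summary, the two substantive points are the analytic seesaw step (Grauert) that produces the local holomorphic sections and hence the entire Lie-group structure, and the nondegeneracy of the commutator pairing on $K(\LL)/K(\LL)^{0}$ that pins down the center.
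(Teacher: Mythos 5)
Your proof is correct, but it reaches the complex Lie group structure by a genuinely different route than the paper. The paper never leaves the universal cover: it fixes an Appell--Humbert representative $\LL\cong\LL(H,\alpha)$, builds the explicit Heisenberg-type group $\tilde{\G}(H,L_E^{\perp})$ of automorphisms $\multB(\lambda)\circ\BB_{H,u}$ of $V\times\C$, checks by hand (Lemma \ref{real0complex}) that this real Lie group is complex because its identity component sits over $\ker(H)$, and then realizes $\SB(\LL)$ as the quotient $\G(H,\alpha)=\tilde{\G}(H,L_E^{\perp})/\tilde{L}$ (Theorems \ref{quotient} and \ref{thetaS}); the identification $\SB(\LL(H,\alpha))=\G(H,\alpha)$ rests on the same two observations you open with, namely that a fibrewise-linear lifting of $T_x$ is an isomorphism $\LL\cong T_x^{*}\LL$ and that fibre-preserving linear automorphisms are scalars by compactness. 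You instead stay on $X$ and manufacture local holomorphic sections of $\rho_{\LL}$ via the seesaw principle applied to $m^{*}\LL\otimes p_1^{*}\LL^{-1}$ over $X\times K(\LL)$, turning $\rho_{\LL}$ into a holomorphic principal $\C^{*}$-bundle; this makes the complex structure manifestly independent of the Appell--Humbert normal form and delivers (0), (i), (iii) and the clopen-identity-component half of (ii) with little computation, at the price of invoking Grauert's direct image and base-change theorems, which the paper avoids entirely. Your observation that the holomorphic commutator pairing must be trivial on the \emph{compact} torus $K(\LL)^{0}$ is also cleaner than the paper's direct check that $E(v,w)=0$ for $v\in\ker(H)$. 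The two arguments necessarily reconverge at the one arithmetic point you correctly isolate as the crux: proving the center is no larger than $\SB(\LL)^{0}$ is exactly the nondegeneracy of the induced pairing on the finite group $K(\LL)/K(\LL)^{0}$, and there you fall back, as the paper does via the elementary-divisor bases $e_i,f_i$ of Subsection \ref{normalForm1} and the second half of the proof of Theorem \ref{theta0}, on the lattice computation with $E$. That step is left as a one-line appeal in your write-up, so it is the only place where you sketch rather than prove, but the fact is standard and your plan for it is the right one.
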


\begin{cor}
\label{commutativeTheta}
If $\LL \in \Pic^0(X)$ then $\mathbf{S}(\LL)$  is commutative.
\end{cor}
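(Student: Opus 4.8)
The plan is to deduce the corollary directly from Theorem~\ref{pi0}(ii), which asserts that $\SB(\LL)$ is commutative precisely when $K(\LL)$ is connected. Thus the whole task reduces to checking that $K(\LL)$ is connected whenever $\LL \in \Pic^0(X)$; in fact I expect to prove the stronger statement $K(\LL)=X$.

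First I would recall the defining feature of $\Pic^0(X)$ for a complex torus: it consists exactly of the \emph{translation-invariant} holomorphic line bundles, i.e.\ those $\LL$ with $T_x^{*}\LL \cong \LL$ for every $x \in X$. Concretely, writing $X=V/\Lambda$ and using the Appell--Humbert description, a holomorphic line bundle on $X$ is given by a pair $(H,\chi)$ with $H$ a Hermitian form on $V$ (representing the first Chern class) and $\chi$ a semicharacter; the class lies in $\Pic^0(X)$ exactly when $H=0$. The homomorphism $\phi_{\LL}\colon X \to \hat X$ into the dual torus introduced in Section~\ref{SK}, whose kernel is $K(\LL)$, depends only on $H$, so $H=0$ forces $\phi_{\LL}$ to be the trivial homomorphism.

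Combining these facts, for $\LL \in \Pic^0(X)$ the isomorphism $T_x^{*}\LL \cong \LL$ holds for all $x \in X$, which is precisely the statement that every $x$ lies in $K(\LL)$; equivalently $K(\LL)=\ker(\phi_{\LL})=X$. Since $X$ is a connected complex torus, $K(\LL)$ is connected, and Theorem~\ref{pi0}(ii) then yields that $\SB(\LL)$ is commutative, as claimed.

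The only point requiring care --- and the sole genuine obstacle --- is the identification of $\Pic^0(X)$ with the translation-invariant line bundles (equivalently, with the kernel of the assignment $\LL \mapsto \phi_{\LL}$). This is classical Appell--Humbert theory and is already available through the reference to Kempf cited in Section~\ref{SK}; once it is invoked the corollary is immediate, with no computation needed beyond quoting Theorem~\ref{pi0}.
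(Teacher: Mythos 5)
Your proposal is correct and follows essentially the same route as the paper: the paper's proof also quotes the classical fact (Kempf, Corollary 1.9) that $\LL \in \Pic^0(X)$ implies $K(\LL)=X$, hence connected, and then applies Theorem~\ref{pi0}(ii). Your additional explanation of why $K(\LL)=X$ via the Appell--Humbert description ($H=0$ forces the homomorphism to the dual torus to vanish) is a correct unpacking of the cited reference, not a different argument.
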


\begin{proof}[Proof of Corollary \ref{commutativeTheta}]
It is known \cite[Corollary 1.9 on p. 7]{Kempf}  that if $\LL\in \Pic^0(X)$ then $K(\LL)=X$ and therefore is connected.
Now the desired result follows from Theorem \ref{pi0}(ii).
\end{proof}

The following assertion was actually proven in \cite{ZarhinTG} in the case when $\dim(X)=1$. (See also \cite[Cor. 3.6]{BandmanZarhinAG}.)

\begin{thm}
\label{embedP1}
Let $\LL$ be a holomorphic line bundle over $X$. Then
there is a  group embedding 
$$\Upsilon_{\LL}:\mathbf{S}(\LL)\hookrightarrow \Hol(Y_{\mathcal{L}})$$
of  $\mathbf{S}(\LL)$ into the group $\Hol(Y_{\mathcal{L}})$
of holomorphic automorphisms of $Y_{\mathcal{L}}=\Pb( \mathcal{L}\oplus \mathbf{1}_X)$ 
such that the
action map 
$$\mathbf{S}(\LL) \times Y_{\mathcal{L}}\to Y_{\mathcal{L}}, \  (\uu, \mathfrak{y})\mapsto \Upsilon_{\LL}(\uu)( \mathfrak{y})
\ \forall \uu\in \mathbf{S}(\LL) , \mathfrak{y}\in Y_{\mathcal{L}}$$ is holomorphic. In addition,  the action of every $\uu\in \mathbf{S}(\LL)$
on $Y_{\mathcal{L}}$ is a lifting of $T_x: X \to X$ where $x=\rho_{\LL}(\uu)$. In other words, the following diagram is commutative.
\begin{equation}\label{diagram3}
\begin{CD}
Y_{\mathcal{L}}@>{\Upsilon_{\LL}(\uu)}>>Y_{\mathcal{L}} \\
@V{}VV@V{}VV\\
X@>{T_x}>>X
\end{CD}\end{equation}

\end{thm}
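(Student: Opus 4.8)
The plan is to make each $\uu\in\SB(\LL)$ act on the auxiliary summand $\mathbf{1}_X$ through the tautological lift of its base translation, to combine this with the given action on $\LL$, and then to pass to the projectivization $Y_{\LL}=\Pb(\LL\oplus\mathbf{1}_X)$. For $x\in X$ let $\tau_x\colon\mathbf{1}_X\to\mathbf{1}_X$, $(z,c)\mapsto(z+x,c)$, be the canonical fiberwise-linear lift of $T_x$ to the trivial bundle $\mathbf{1}_X=X\times\C$; these satisfy $\tau_x\circ\tau_y=\tau_{x+y}$ and depend holomorphically on $x$. Given $\uu\in\SB(\LL)$ with $x=\rho_{\LL}(\uu)$, I would form the direct sum
$$\Phi(\uu):=\uu\oplus\tau_x\colon\ \LL\oplus\mathbf{1}_X\longrightarrow\LL\oplus\mathbf{1}_X,$$
a fiberwise-linear holomorphic automorphism of the rank $2$ bundle $\LL\oplus\mathbf{1}_X$ lifting $T_x$. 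Since $\Phi(\uu)$ is linear on fibers it commutes with the fiberwise scaling by $\C^{*}$, so it descends to a holomorphic automorphism $\Upsilon_{\LL}(\uu)$ of $Y_{\LL}$ that again lifts $T_x$; this yields the commutative diagram \eqref{diagram3}.

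Next I would check that $\Upsilon_{\LL}$ is an injective homomorphism. Because $\rho_{\LL}$ is a homomorphism, if $\uu_i$ lifts $T_{x_i}$ then $\uu_1\uu_2$ lifts $T_{x_1+x_2}$, and $\tau_{x_1}\circ\tau_{x_2}=\tau_{x_1+x_2}$; hence $\Phi(\uu_1\uu_2)=\Phi(\uu_1)\circ\Phi(\uu_2)$ and therefore $\Upsilon_{\LL}(\uu_1\uu_2)=\Upsilon_{\LL}(\uu_1)\circ\Upsilon_{\LL}(\uu_2)$. For injectivity, suppose $\Upsilon_{\LL}(\uu)=\II$. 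Its base map is then $T_x=\II$, forcing $x=0$, so $\uu\in\ker\rho_{\LL}=\mult_{\LL}(\C^{*})$, say $\uu=\mult_{\LL}(\lambda)$. On the fiber $\Pb(\LL_z\oplus\C)\cong\C\Pb^1$ the map $\Upsilon_{\LL}(\uu)$ then acts by $[\ell:c]\mapsto[\lambda\ell:c]$, which is the identity for all $z$ exactly when $\lambda=1$; thus $\uu=\II$ and $\Upsilon_{\LL}$ is an embedding.

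Finally, for the holomorphy of the action map I would argue as follows. The action of $\SB(\LL)$ on the total space of $\LL$ is holomorphic by Theorem \ref{pi0}(0), while the action on $\mathbf{1}_X$ is $(\uu,(z,c))\mapsto(z+\rho_{\LL}(\uu),c)$, which is holomorphic because $\rho_{\LL}$ is a homomorphism of complex Lie groups (Theorem \ref{pi0}(i)) and the group law of $X$ is holomorphic. Hence the joint action $\SB(\LL)\times(\LL\oplus\mathbf{1}_X)\to\LL\oplus\mathbf{1}_X$ is holomorphic. I expect the only genuinely technical point to be the descent of this holomorphy through the projectivization: since the projection from $(\LL\oplus\mathbf{1}_X)$ minus its zero section onto $Y_{\LL}$ is a holomorphic $\C^{*}$-bundle, hence a holomorphic submersion admitting local holomorphic sections, the $\SB(\LL)$-action downstairs is holomorphic because locally it is the composite of the (holomorphic) upstairs action with such a section. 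All other steps are formal, so this submersion/descent argument is where I would concentrate the care.
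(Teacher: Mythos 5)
Your proposal is correct and follows essentially the same route as the paper: act on the trivial summand $\mathbf{1}_X$ by the tautological lift of $T_{\rho_{\LL}(\uu)}$, take the fiberwise direct sum with the given action on $\LL$, and descend to the projectivization. Your explicit check that $[\ell:c]\mapsto[\lambda\ell:c]$ is the identity only for $\lambda=1$, and the local-section argument for holomorphy of the descended action, merely spell out steps the paper leaves implicit.
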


The following assertion  
was actually proven in \cite{ZarhinE} in the case when $X$ is an abelian variety and $\LL$ is ample.

\begin{thm}
\label{pi01}
The Jordan constant of $\mathbf{S}(\LL)$   is $\sqrt{\#\left(K(\LL)/K(\LL)^{0}\right)}$.
\end{thm}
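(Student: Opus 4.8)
The plan is to recognize $\SB(\LL)$ as a Heisenberg-type central extension and to read off its Jordan constant from the symplectic geometry of the finite group $\bar K := K(\LL)/K(\LL)^{0}$, whose order I abbreviate by $N := \#\left(K(\LL)/K(\LL)^{0}\right)$. First I would build the commutator pairing. Since $\rho_{\LL}:\SB(\LL)\to K(\LL)$ is a homomorphism into an abelian group, every commutator $uvu^{-1}v^{-1}$ lies in $\ker(\rho_{\LL})=\mult_{\LL}(\C^{*})$, so
\[
e(u,v):=uvu^{-1}v^{-1}\in\C^{*}
\]
is an alternating bimultiplicative pairing on $\SB(\LL)$. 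By Theorem \ref{pi0}(ii) the subgroup $\SB(\LL)^{0}=\rho_{\LL}^{-1}(K(\LL)^{0})$ is central, hence $e(u,v)=1$ whenever $u$ or $v$ lies in $\SB(\LL)^{0}$, and therefore $e$ descends to an alternating pairing $\bar e:\bar K\times\bar K\to\C^{*}$. Crucially, the same part of Theorem \ref{pi0} says that $\SB(\LL)^{0}$ is \emph{exactly} the center, which is precisely the statement that $\bar e$ is \emph{nondegenerate}. A nondegenerate alternating pairing on a finite abelian group forces $N$ to be a perfect square and $\sqrt N$ a positive integer, yields $\#H\cdot\#H^{\perp}=N$ for every subgroup $H\subseteq\bar K$, and hence makes every $\bar e$-isotropic subgroup have order at most $\sqrt N$; these are the standard linear-algebra facts I would either cite or derive from the structure theorem for such bicharacters.

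Next I would reduce everything about a finite subgroup $B\subseteq\SB(\LL)$ to its image $\bar B\subseteq\bar K$. Because a commutator in $B$ equals $\bar e$ evaluated on the images, a subgroup of $B$ is abelian iff its image in $\bar K$ is $\bar e$-isotropic; moreover, since $\bar B$ is abelian, the preimage in $B$ of \emph{any} subgroup of $\bar B$ is automatically normal in $B$. For the upper bound $J_{\SB(\LL)}\le\sqrt N$, let $\bar B^{\perp}$ be the radical of $\bar e|_{\bar B}$, so that $\bar B/\bar B^{\perp}$ is nondegenerate symplectic of square order $m^{2}\le\#\bar B\le N$. Choosing a Lagrangian in $\bar B/\bar B^{\perp}$ and pulling it back to an isotropic subgroup $\bar M\subseteq\bar B$ of index $m$, its preimage $M\subseteq B$ is abelian, normal, and of index $[B:M]=[\bar B:\bar M]=m\le\sqrt N$. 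Thus every finite $B$ has an abelian normal subgroup of index at most $\sqrt N$.

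For the lower bound $J_{\SB(\LL)}\ge\sqrt N$ I must produce a single finite subgroup $B$ whose image is all of $\bar K$, and I expect the \emph{construction of this $B$} to be the main obstacle: an arbitrary set of lifts of $\bar K$ generically generates an infinite subgroup in the $\C^{*}$-direction. The remedy is that $\SB(\LL)^{0}$ is a connected commutative complex Lie group, hence divisible. Given $\beta\in\bar K$ of order $d$ and any lift $g_{0}\in\SB(\LL)$, the element $g_{0}^{d}$ lies in the central subgroup $\SB(\LL)^{0}$, so multiplying $g_{0}$ by a central $d$-th root of $g_{0}^{d}$ produces a lift of order exactly $d$. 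Taking such finite-order lifts $g_{1},\dots,g_{r}$ of generators of $\bar K$ and setting $B:=\langle g_{1},\dots,g_{r}\rangle$, the commutators $[g_{i},g_{j}]=\bar e(\beta_{i},\beta_{j})$ are roots of unity, so $[B,B]$ is finite and $B^{\ab}$ is a finitely generated torsion group; hence $B$ is finite and surjects onto $\bar K$.

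Finally, for this $B$ any abelian normal subgroup $\A$ has $\bar e$-isotropic image $\bar\A$, so $\#\bar\A\le\sqrt N$; writing $\#B=\#(B\cap\SB(\LL)^{0})\cdot N$ and $\#\A=\#(\A\cap\SB(\LL)^{0})\cdot\#\bar\A$, and using $\A\cap\SB(\LL)^{0}\subseteq B\cap\SB(\LL)^{0}$, I get
\[
[B:\A]=\frac{\#(B\cap\SB(\LL)^{0})\cdot N}{\#(\A\cap\SB(\LL)^{0})\cdot\#\bar\A}\ \ge\ \frac{N}{\#\bar\A}\ \ge\ \sqrt N .
\]
Hence no abelian normal subgroup of this particular $B$ has index below $\sqrt N$, which forces $J_{\SB(\LL)}\ge\sqrt N$; combined with the upper bound this gives $J_{\SB(\LL)}=\sqrt N=\sqrt{\#\left(K(\LL)/K(\LL)^{0}\right)}$, as claimed.
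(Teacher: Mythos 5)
Your proof is correct, and its skeleton --- commutator pairing on $K(\LL)/K(\LL)^{0}$, ``abelian iff isotropic image,'' upper bound from maximal isotropic subgroups, lower bound from a finite subgroup surjecting onto the component group --- is the same as the paper's. But two of your ingredients are obtained by genuinely different means, and the comparison is instructive. First, the paper establishes the nondegeneracy of the pairing on $K(\LL)/K(\LL)^{0}$ concretely: it passes to the Appel--Humbert normal form $\LL\cong\LL(H,\alpha)$, splits off the case $H=0$, and for $H\neq 0$ exhibits a symplectic basis of $L_1$ so that the pairing $\mathbf{e}_E$ on $L_{1,E}^{\perp}/L_1$ is visibly nondegenerate (Subsection \ref{finiteGroupPairing}, Remark \ref{restrict}). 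You instead deduce nondegeneracy purely formally from Theorem \ref{pi0}(ii) (``the identity component is exactly the center''), which lets you treat all cases uniformly and avoid the normal form entirely; the price is that you lean on Theorem \ref{pi0}(ii), whose proof in the paper itself goes through that same symplectic computation (Theorem \ref{theta0}), so the explicit work has not disappeared, only been relocated. Second, your construction of a finite subgroup surjecting onto $\bar K$ replaces the paper's Lemma \ref{liftFinite} (order-preserving lifts of \emph{every} element of $\Delta$, saturated by $\bmu_{d^2}$, with an explicit verification of closure) by the cleaner route of taking finite-order lifts of \emph{generators} --- using divisibility of the central identity component exactly as the paper does --- and then observing that a group generated by finite-order elements whose pairwise commutators are central roots of unity is finite. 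Both arguments rest on the same two facts (centrality and divisibility of $\SB(\LL)^{0}$); yours is shorter to state, though you should make explicit that $[B,B]$ is generated by the commutators $[g_i,g_j]$ of the generators, which holds precisely because those commutators are central. Your final index estimate for the lower bound is a mild streamlining of the paper's count via Theorem \ref{abelianSUB}(0) and is correct.
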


We use Theorem \ref{embedP1} and ideas related to Theorem \ref{pi01} in the proof of the following  main result of this paper.

\begin{thm}
\label{manP1}
Let $X$ be a complex torus of positive algebraic dimension. Then the group $\Bim(X\times \C\Pb^1)$ 
is  not Jordan.
\end{thm}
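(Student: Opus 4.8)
The plan is to reduce everything to the theta groups $\SB(\LL)$ and the two results just stated, exploiting the hypothesis of positive algebraic dimension only through the cited fact that $X$ then admits a surjective homomorphism $\pi: X \twoheadrightarrow A$ of complex tori onto a positive-dimensional \emph{abelian variety} $A$. Fix an ample line bundle $M$ on $A$ and put $\LL_n := \pi^{*}(M^{\otimes n})$ for $n=1,2,\dots$. The strategy is to exhibit, inside $\Bim(X \times \C\Pb^1)$, the family of subgroups $\SB(\LL_n)$ and to show that their Jordan constants tend to infinity; since the Jordan constant is monotone under passage to subgroups (a finite subgroup of a subgroup is a finite subgroup of the ambient group), this forces $\Bim(X \times \C\Pb^1)$ to be non-Jordan.

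First I would produce the chain of embeddings $\SB(\LL_n) \hookrightarrow \Hol(Y_{\LL_n}) \subset \Bim(Y_{\LL_n}) \cong \Bim(X \times \C\Pb^1)$. The first arrow is $\Upsilon_{\LL_n}$ from Theorem \ref{embedP1}, and $\Hol \subset \Bim$ is automatic. The last isomorphism is the step I expect to be the main obstacle: it requires that the $\C\Pb^1$-bundle $Y_{\LL_n}=\Pb(\LL_n \oplus \mathbf{1}_X)$ be \emph{bimeromorphic} to $X \times \C\Pb^1 = \Pb(\mathbf{1}_X \oplus \mathbf{1}_X)$. This is exactly where ampleness enters: $M$ is ample on the abelian variety $A$, hence effective, so $M^{\otimes n}$ has a nonzero holomorphic section, whose pullback is a nonzero holomorphic section $s$ of $\LL_n$ (pullback along the surjective $\pi$ is injective on sections). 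Such an $s$ trivializes the projective bundle away from its zero locus: the assignment $(x,[a:b]) \mapsto [\,s(x)\,a : b\,]$ defines a meromorphic map $X \times \C\Pb^1 \to Y_{\LL_n}$ that is a bimeromorphic equivalence (its inverse is defined on the dense open set $\{s\neq 0\}$ via the trivialization of $\LL_n$ by $s$), and a bimeromorphic equivalence induces $\Bim(Y_{\LL_n}) \cong \Bim(X \times \C\Pb^1)$ by conjugation.

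It then remains to show $J_{\SB(\LL_n)} \to \infty$. By Theorem \ref{pi01} this constant equals $\sqrt{\#\left(K(\LL_n)/K(\LL_n)^{0}\right)}$, so I would estimate this finite group from below. From $\pi \circ T_x = T_{\pi(x)} \circ \pi$ one gets $T_x^{*}\LL_n = \pi^{*}T_{\pi(x)}^{*}M^{\otimes n}$, whence $\pi^{-1}\!\left(K(M^{\otimes n})\right) \subseteq K(\LL_n)$; in particular $\pi$ maps $K(\LL_n)$ onto the finite group $K(M^{\otimes n})$. As $K(\LL_n)^{0}$ is connected it lands in the identity of $K(M^{\otimes n})$, so $\pi$ descends to a surjection $K(\LL_n)/K(\LL_n)^{0} \twoheadrightarrow K(M^{\otimes n})$ and, using $\phi_{M^{\otimes n}}=n\,\phi_M$,
\[
\#\left(K(\LL_n)/K(\LL_n)^{0}\right) \ge \#K(M^{\otimes n}) = n^{2g}\,\#K(M), \qquad g := \dim(A) \ge 1,
\]
which tends to infinity with $n$. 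Consequently $J_{\SB(\LL_n)} \ge n^{g}\sqrt{\#K(M)} \to \infty$, and by the monotonicity noted above $\Bim(X \times \C\Pb^1)$ cannot be Jordan. The only genuinely delicate point is the bimeromorphic triviality of $Y_{\LL_n}$; everything else assembles Theorems \ref{embedP1} and \ref{pi01} with standard facts about ample line bundles on $A$.
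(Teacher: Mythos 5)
Your proposal is correct, and its overall architecture coincides with the paper's: pull back powers of an ample line bundle along a surjection $\pi:X\to A$ onto a positive-dimensional abelian variety, embed the theta group into $\Hol(Y_{\LL_n})$ via Theorem~\ref{embedP1}, use a nonzero holomorphic section to identify $\Bim(Y_{\LL_n})$ with $\Bim(X\times \C\Pb^1)$, and let $n\to\infty$. The step you single out as the main obstacle is in fact Remark~\ref{section} of the paper, proved by exactly the map you write down. Where you genuinely diverge is the lower bound on the Jordan constant. The paper stays on $X$: it writes $\pi^{*}\mathcal{M}\cong\LL(H,\alpha)$, checks $H\ne 0$ by counting sections, observes that $\IM(nH)$ takes values in $n\Z$ on $L\times L$, and invokes Corollary~\ref{JordanN} (resting on the divisibility Lemma~\ref{divn}) to get $J\ge n$ --- no analysis of $K(\LL^{\otimes n})$ beyond that divisibility is needed. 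You instead combine Theorem~\ref{pi01} with a pushforward of $K(\LL_n)$ down to $A$, which is more geometric and yields the sharper bound $n^{g}\sqrt{\#K(M)}$, at the cost of an extra argument about the image $\pi(K(\LL_n))$.

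That extra argument contains one imprecision worth fixing. From $T_x^{*}\LL_n\cong\pi^{*}T_{\pi(x)}^{*}M^{\otimes n}$ you may only conclude that $T_{\pi(x)}^{*}M^{\otimes n}\otimes \bigl(M^{\otimes n}\bigr)^{-1}$ lies in the kernel of $\pi^{*}:\Pic^{0}(A)\to\Pic^{0}(X)$; this kernel is finite but need not be trivial, so $\pi(K(\LL_n))$ is contained in a finite group that contains $K(M^{\otimes n})$ but may be strictly larger --- the assertion that $\pi$ maps $K(\LL_n)$ \emph{onto} $K(M^{\otimes n})$ is not what your computation shows. This does not damage the proof: finiteness of $\pi(K(\LL_n))$ still forces $\pi\bigl(K(\LL_n)^{0}\bigr)=0$ (alternatively, by Remark~\ref{pi0K} and the ampleness of $M$ one sees directly that $K(\LL_n)^{0}=\ker(\bar\pi)/(L\cap\ker\bar\pi)\subset\ker\pi$), and the containment $\pi(K(\LL_n))\supseteq K(M^{\otimes n})$, which does follow from $\pi^{-1}\bigl(K(M^{\otimes n})\bigr)\subseteq K(\LL_n)$ and the surjectivity of $\pi$, already gives $\#\bigl(K(\LL_n)/K(\LL_n)^{0}\bigr)\ge \#K(M^{\otimes n})=n^{2g}\,\#K(M)\to\infty$, which is all you need.
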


The special case of Theorem \ref{manP1} when $X$ is a complex abelian variety was done in \cite{ZarhinE}.
We also prove the following generalizations of Theorem \ref{manP1}.

\begin{thm}
\label{manP2}
Let $\psi:X\to A$ be a surjective holomorphic group homomorphism from a complex torus $X$ onto a complex abelian variety $A$ of positive dimension.
 Let $\mathcal{F}$ be a holomorphic line bundle on $X$ that enjoys the following property:

 there exist a holomorphic line bundle $\mathcal{M}$ on $A$ and a holomorphic line bundle $\mathcal{F}_0\in \Pic^{0}(X)$ such that $\mathcal{F}$ is isomorphic to the tensor product $\psi^{*}M\otimes \mathcal{F}_0$.

 Then the group $\Bim(Y_{\mathcal{F}})$ is not Jordan.
\end{thm}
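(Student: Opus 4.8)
The plan is to exhibit, inside $\Bim(Y_{\mathcal F})$, a sequence of finite subgroups whose minimal index of an abelian normal subgroup is unbounded; this is exactly the failure of the Jordan property. Since $A$ is an abelian variety it is projective, so I first fix an \emph{ample} holomorphic line bundle $\mathcal N$ on $A$ and set $\LL_n:=\mathcal F\otimes\psi^{*}\mathcal N^{\otimes n}$ for $n=1,2,\dots$. By Theorem \ref{embedP1} each theta group $\SB(\LL_n)$ embeds into $\Hol(Y_{\LL_n})\subset\Bim(Y_{\LL_n})$ via $\Upsilon_{\LL_n}$, so it suffices to (a) identify each $Y_{\LL_n}$ bimeromorphically with the \emph{fixed} manifold $Y_{\mathcal F}$, which transports the embedding into $\Bim(Y_{\mathcal F})$, and (b) show that the Jordan constant of $\SB(\LL_n)$ tends to infinity.

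For (a) the hypothesis on $\mathcal F$ is decisive: the ``difference'' $\LL_n\otimes\mathcal F^{-1}=\psi^{*}\mathcal N^{\otimes n}$ is pulled back from $A$. A line bundle on the projective variety $A$ has a nonzero rational (hence meromorphic) section, and its pullback under $\psi$ is a nonzero meromorphic section $\sigma_n$ of $\psi^{*}\mathcal N^{\otimes n}$ on $X$. Multiplication by $\sigma_n$ is a meromorphic bundle map $\mathcal F\dashrightarrow\LL_n$ that is an isomorphism away from $|\div(\sigma_n)|$; acting as $\sigma_n$ on the first summand and as the identity on the second gives a meromorphic isomorphism $\mathcal F\oplus\mathbf{1}_X\dashrightarrow\LL_n\oplus\mathbf{1}_X$ over the dense open complement of $|\div(\sigma_n)|$. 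Projectivizing yields a bimeromorphism $\Phi_n\colon Y_{\mathcal F}\dashrightarrow Y_{\LL_n}$, and conjugation by $\Phi_n$ produces an injective homomorphism $\SB(\LL_n)\hookrightarrow\Bim(Y_{\mathcal F})$, $\uu\mapsto\Phi_n^{-1}\circ\Upsilon_{\LL_n}(\uu)\circ\Phi_n$. Note that I compare $Y_{\LL_n}$ with $Y_{\mathcal F}$ itself, keeping the $\mathcal F_0$-twist fixed; one cannot in general trivialize $Y_{\mathcal F}$ to $X\times\C\Pb^1$, since $\mathcal F_0\in\Pic^0(X)$ may carry no nonzero meromorphic section.

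For (b), since $\mathcal F_0\in\Pic^0(X)$ contributes trivially to the associated homomorphism $X\to\hat X$, we have $K(\LL_n)=K(\psi^{*}(\mathcal M\otimes\mathcal N^{\otimes n}))=\ker\big(\hat\psi\circ(\phi_{\mathcal M}+n\,\phi_{\mathcal N})\circ\psi\big)$. Because $\mathcal N$ is ample, $\phi_{\mathcal N}$ is positive definite, so for large $n$ the map $\phi_{\mathcal M}+n\,\phi_{\mathcal N}$ is an isogeny of degree $\chi(\mathcal M\otimes\mathcal N^{\otimes n})^{2}\sim n^{2\dim A}$; hence $K(\LL_n)^{0}=(\ker\psi)^{0}$ is independent of $n$ while the finite quotient $K(\LL_n)/K(\LL_n)^{0}$ has order growing like $n^{2\dim A}\to\infty$. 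By Theorem \ref{pi01} the Jordan constant $J_{\SB(\LL_n)}=\sqrt{\#\!\left(K(\LL_n)/K(\LL_n)^{0}\right)}$ therefore tends to infinity. By the very definition of the Jordan constant as a supremum over finite subgroups, for each $n$ I may choose a finite subgroup $B_n\subseteq\SB(\LL_n)$ whose minimal abelian-normal index is at least $\tfrac12 J_{\SB(\LL_n)}$; their images under the embeddings of (a) give finite subgroups of $\Bim(Y_{\mathcal F})$ with arbitrarily large minimal abelian-normal index, so $\Bim(Y_{\mathcal F})$ is not Jordan.

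The main obstacle is step (a): constructing $\Phi_n$ and checking that conjugation by it carries the \emph{biholomorphic} theta-group action into honest \emph{bimeromorphic} automorphisms of the single fixed manifold $Y_{\mathcal F}$ (rather than merely meromorphic self-maps). The only genuine use of the abelian-variety hypothesis is the existence of the ample $\mathcal N$, needed both to produce the meromorphic sections $\sigma_n$ and to force the isogeny degrees in (b) to grow; the shape $\mathcal F=\psi^{*}\mathcal M\otimes\mathcal F_0$ enters precisely to guarantee that $\LL_n\otimes\mathcal F^{-1}$ remains a pullback from $A$. Specializing to $\mathcal F=\mathbf{1}_X$ recovers Theorem \ref{manP1}.
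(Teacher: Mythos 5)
Your argument is correct and has the same overall architecture as the paper's: twist $\mathcal F$ by $n$-th powers of the pullback of an ample bundle on $A$, use a section to identify all the resulting $\C\Pb^1$-bundles bimeromorphically with the fixed $Y_{\mathcal F}$, embed the theta groups via Theorem \ref{embedP1}, and let their Jordan constants blow up. Where you genuinely diverge is in the quantitative core. The paper routes Theorem \ref{manP2} through Theorem \ref{JordanP} and the ``pencils of Hermitian forms'' machinery of Section \ref{pencils}: it pulls back a polarization $H_A$ to a semi-positive form $H$ dominating the form $\HH$ of $\psi^{*}\mathcal M$, forms $H_n=\HH+nH$, and shows via the determinant polynomial $f_{H,\HH}(n)=\det(\tilde E_n)$ (Theorem \ref{pencilH}, Remark \ref{quotientPerp}) that $\#(L_{1,E_n}^{\perp}/L_1)\to\infty$, feeding this into Theorem \ref{JordanT}. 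You instead compute $K(\LL_n)=\psi^{-1}(\ker(\phi_{\mathcal M}+n\phi_{\mathcal N}))$ and invoke $\deg\phi_{L}=\chi(L)^2$ together with Riemann--Roch on $A$ to get $\#\bigl(K(\LL_n)/K(\LL_n)^{0}\bigr)\sim n^{2\dim A}$, then apply Theorem \ref{pi01} directly. Your route is shorter and bypasses the pencil analysis entirely (in particular you never need the stabilization $\ker H_n=\ker H$, since Theorem \ref{pi01} is stated unconditionally), at the cost of importing classical abelian-variety theory ($\phi_L$, isogeny degrees, Riemann--Roch) that the paper deliberately re-derives in its lattice-theoretic framework; the paper's determinant computation is the lattice-level shadow of your $\chi(\mathcal M\otimes\mathcal N^{\otimes n})^2$. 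Two cosmetic remarks: since $\mathcal N$ is ample, $\mathcal N^{\otimes n}$ already has nonzero \emph{holomorphic} sections, so the detour through meromorphic sections in step (a) is unnecessary (this is exactly Remark \ref{section}(i)); and your observation that one must compare with $Y_{\mathcal F}$ itself rather than $X\times\C\Pb^1$, because $\mathcal F_0\in\Pic^{0}(X)$ may admit no nonzero meromorphic section, correctly identifies why Theorem \ref{manP2} does not reduce to Theorem \ref{manP1}.
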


\begin{ex}
Taking $X=A$ and $\psi$ the identity map, we obtain from Theorem \ref{manP2} that if $X$ is a positive-dimensional {\sl complex abelian variety}
 then the group $\Bim(Y_{\mathcal{F}})$ is {\sl not} Jordan for every holomorphic line bundle over $X$. (Actually, this assertion follows from results
 of \cite{ZarhinE}.)
\end{ex}

\begin{thm}
\label{manP3}
Let   $X$ be a complex torus and  $\mathcal{F}$ be a holomorphic line bundle on $X$.
Let $X_0$ be a complex subtorus in $X$  that enjoys the following properties.

\begin{itemize}
\item[(i)]
$X_0$ has positive dimension.
\item[(ii)]
The quotient $A:=X/X_0$ is a complex abelian variety of positive dimension.
\item[(iii)]
The restriction of $\mathcal{F}$ to $X_0$ lies in $\Pic^{0}(X_0)$.
\item[(iv)] $\Hom(X_0,A)=\{0\}$.
\end{itemize}
 
 Then the group $\Bim(Y_{\mathcal{F}})$ is not Jordan.
\end{thm}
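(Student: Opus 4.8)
The plan is to deduce Theorem~\ref{manP3} from Theorem~\ref{manP2} applied to the quotient homomorphism $\psi\colon X\to A=X/X_0$, which by hypothesis (ii) is a surjective holomorphic group homomorphism onto a positive-dimensional complex abelian variety. Everything then reduces to checking that $\mathcal{F}$ has the shape required by Theorem~\ref{manP2}, i.e.\ that there exist a holomorphic line bundle $\mathcal{M}$ on $A$ and an $\mathcal{F}_0\in\Pic^{0}(X)$ with $\mathcal{F}\cong\psi^{*}\mathcal{M}\otimes\mathcal{F}_0$. I would attack this through the Appell--Humbert description. Writing $X=V/\Lambda$ with $V=\Lie(X)$, the first Chern class of $\mathcal{F}$ is encoded by a Hermitian form $H$ on $V$ whose imaginary part $E=\IM H$ is $\Z$-valued on $\Lambda$; the image of $\mathcal{F}$ in $\NS(X)$ is determined by $H$, and $\mathcal{F}\cong\psi^{*}\mathcal{M}\otimes\mathcal{F}_0$ holds for suitable $\mathcal{M},\mathcal{F}_0$ precisely when $H$ is pulled back from the quotient $W=V/V_0$, where $V_0=\Lie(X_0)\subset V$. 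Concretely, this amounts to requiring that $V_0$ lie in the radical $\ker H$.

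The core step is to promote hypothesis (iii) to this radical statement using hypothesis (iv). Condition (iii), that $\mathcal{F}|_{X_0}\in\Pic^{0}(X_0)$, says exactly that $H$ vanishes on $V_0\times V_0$. Hence for each $v_0\in V_0$ the antilinear functional $H(v_0,\cdot)$ kills $V_0$ and descends to an antilinear functional on $W$, and the integrality of $E$ on $\Lambda$ guarantees that $\Lambda_0:=\Lambda\cap V_0$ is carried into the lattice of the dual torus $\widehat{A}$, so that the $\C$-linear assignment $v_0\mapsto H(v_0,\cdot)$ descends to a holomorphic homomorphism of complex tori $X_0\to\widehat{A}$. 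Since $A$ is an abelian variety, so is $\widehat{A}$, and the two are isogenous; as the image of a torus under a homomorphism is a subtorus, any nonzero homomorphism $X_0\to\widehat{A}$ would, after composition with an isogeny $\widehat{A}\to A$, yield a nonzero homomorphism $X_0\to A$, contradicting (iv). Therefore the homomorphism $X_0\to\widehat{A}$ is zero, that is $H(v_0,\cdot)\equiv 0$, and so $V_0\subset\ker H$.

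Once $V_0$ lies in the radical of $H$, the form $H$ descends to a Hermitian form $\bar H$ on $W=V/V_0$ whose imaginary part is $\Z$-valued on $\Lambda_A:=\psi(\Lambda)$, the lattice of $A$, since any two lattice vectors of $A$ lift to $\Lambda$ where $E$ is already integral. Hence $\bar H$ is the Hermitian form attached to some holomorphic line bundle $\mathcal{M}$ on $A$, and by construction $\psi^{*}\mathcal{M}$ has the same Hermitian form $H$ as $\mathcal{F}$. Consequently $\mathcal{F}_0:=\mathcal{F}\otimes(\psi^{*}\mathcal{M})^{-1}$ has trivial first Chern class, i.e.\ $\mathcal{F}_0\in\Pic^{0}(X)$, and $\mathcal{F}\cong\psi^{*}\mathcal{M}\otimes\mathcal{F}_0$. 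This verifies the hypothesis of Theorem~\ref{manP2}, which then gives that $\Bim(Y_{\mathcal{F}})$ is not Jordan. I expect the only genuine obstacle to be the argument of the middle paragraph---extracting from (iv) that the auxiliary homomorphism $X_0\to\widehat{A}$ vanishes, and thereby upgrading the diagonal vanishing in (iii) to full vanishing of $H$ along $V_0$; the remaining descent and the appeal to Theorem~\ref{manP2} are then formal.
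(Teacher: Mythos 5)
Your proposal is correct and follows essentially the same route as the paper: use (iii) to get $H|_{V_0\times V_0}=0$, package $H(v_0,\cdot)$ into a holomorphic homomorphism $X_0\to\widehat{A}$ which hypothesis (iv) (via the isogeny $A\sim\widehat{A}$) forces to vanish, conclude $V_0\subset\ker H$, descend $H$ to $A$, and invoke Theorem~\ref{manP2}. The only cosmetic difference is that the paper names the intermediate sesquilinear form $S$ and spells out that the zero homomorphism of tori forces the linear map into the dual lattice (hence into a discrete set) to vanish, a small step you elide but which is immediate.
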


\begin{ex}
Suppose that $X$ is a two-dimensional complex torus that contains a one-dimensional subtorus $X_0$.
Then $X_0$ is  a one-dimensional abelian variety (elliptic curve) and the quotient $X_1=X/X_0$ is also a
one-dimensional torus and therefore is also an elliptic curve. Now the condition $\Hom(X_0,X_1)=\{0\}$ means that
$X_0$ and $X_1$ are not isogenous.  It follows from Theorem \ref{manP3} that if $X_0$ and $X_1$ are {\sl not} isogenous
and  $\mathcal{F}$ is a holomorphic line bundle on $X$, whose restriction to $X_0$ lies in $\Pic^0(X_0)$ (i.e., has degree $0$)
then $\Bim(Y_{\mathcal{F}})$ is {\sl not} Jordan.
\end{ex}

The paper is organized as follows. In Section \ref{prel} we discuss certain natural nonlinear transformation groups that act in complex vector spaces.
Sections \ref{AppelHumbert}  deals mostly with linear algebra (Hermitian forms,  lattices, discriminant groups) related to holomorphic bundles on complex tori
via {\sl Appel - Humbert theorem}.  In Section  \ref{nonzeroH}  we discuss in detail {\sl theta groups}, which are pretty well known in the case of abelian varieties \cite{Mumford,Kempf}. Theorems  \ref{pi0} and  \ref{embedP1} are proven in Section \ref{proofpi0}. Jordan properties of theta groups are discussed
 in Section \ref{JordanTheta}; they are used 
 in the proof of Theorem  \ref{pi01}
in Section \ref{zeroproof}.  Theorem \ref{manP1} is proven in  Section \ref{firstproof}.
Section \ref{pencils} deals with {\sl pencils} (one-dimensional families) of Hermitian forms; its results are used in Section  \ref{secondproof} in the proofs of Theorems
\ref{manP3} and \ref{manP2}.  In Section \ref{Pic0} we discuss theta groups that correspond to line bundles from $\Pic^0$ and identify them with the complement
of the total space of the line bundle to the zero section. (In particular, we give another proof of their commutativity)

{\bf Acknowledgements}.
This paper is a result of an attempt to  answer  questions of Constantin Shramov. I am grateful to him for interesting stimulating  questions and  discussions. My special thanks go to Vladimir L. Popov, whose thoughtful comments helped to improve the exposition.

\section{Preliminaries}
\label{prel}
\begin{sect}
\label{groupCommutator}
Throughout the paper we will freely use the following well known {\bf commutator pairing} 
\begin{equation}
\label{epairing}
\mathbf{e}: C \times C \to \mathbf{A}
\end{equation}
that arises from  a short exact sequence of groups ({\sl central extension} of $C$ by $\mathbf{A}$) 
\begin{equation}
\label{eCentral}
1 \to \mathbf{A} \to \mathbf{B} \overset{q}{\to}  C \to 1
\end{equation}
where $\mathbf{A}$ is a  central subgroup of $\mathbf{B}$ and $C$ is a commutative group. Recall that in order
to find $\mathbf{e}(c_1,c_2)\in \mathbf{A}$ for $c_1,c_2\in C$ one has to choose {\sl preimages} 
$b_1, b_2 \in \mathbf{B}$ 
with  respect to {\sl surjection} $q:\mathbf{B} \to C$, i.e., 
$$q(b_1)=c_1, q(b_2)=c_2,$$
 and put
\begin{equation}
\label{eDef}
\mathbf{e}(c_1,c_2):=b_1 b_2 b_1^{-1} b_2^{-1} \in \mathbf{A};
\end{equation}
$\mathbf{e}(c_1,c_2)$ does {\sl not} depend on a choice of $b_1,b_2$.
It is well known that $\mathbf{e}$ is bimultiplicative and alternating. It follows from the very definition of $\mathbf{e}$ that a subgroup $\mathbf{K}\subset \mathbf{B}$
is commutative if and only if its image $q(\mathbf{K})$ is an isotropic subgroup of $C$ with respect to $\mathbf{e}$.
\end{sect}
\begin{sect}
Let $V \cong \C^g$ be a finite-dimensional complex vector space of finite positive dimension $g$.
Let $\LB\cong \C$ be a one-dimensional complex vector space and $V_{\LB}:=V \times \LB$ viewed as a complex manifold. We write $\Hol(V_{\LB})$ for the group of holomorphic automorphisms of $V_{\LB}$.  For each $\lambda\in \C^{*}$ we write $\multB(\lambda)$ for the holomorphic automorphism of 
$V_{\LB}$ defined by
$$\multB(\lambda): (v,c) \mapsto (v,\lambda c) \ \forall v \in V, c \in \LB.$$
The map 
$$\multB: \C^{*} \to \Hol(V_{\LB}), \ \lambda \mapsto \multB(\lambda)$$
is an injective group homomorphism with image $\multB(\C^{*})$.  We write  $\Hol_0(V_{\LB})$ for the centralizer of $\multB(\C^{*})$ in $\Hol(V_{\LB})$. 
Clearly, $\Hol_0(V_{\LB})$ is a subgroup of $\Hol(V_{\LB})$  containing $\multB(\C^{*})$.  In what follows we discuss certain subgroups of 
$\Hol_0(V_{\LB})$ related to line bundles on complex tori of the form $V/L$ where $L$ is a  {\sl lattice of maximal rank} in $V$, i.e., a discrete subgroup of rank $2\dim_{\C}(V)=2g$.
\end{sect}
\begin{sect}
\label{BH}
Let $H: V \times V \to \C$ be a Hermitian form on $V$ and
$$E: V \times V \to \R,  \ u,v \mapsto \IM(H(u,v))$$
its imaginary part. Then $E$
is an alternating $\R$-bilinear form such that
\begin{equation}
\label{ib}
E(\ib u,\ib v)=E(u,v), \  H(u,v)= E(u,\ib v)+\ib E(u,v)\ \forall u,v \in V\}.
\end{equation}
As usual, consider the {\sl kernel} of $H$
\begin{equation}
\label{kerHCR}
\ker(H):=\{u\in V\mid H(u,v)=0 \ \forall v\in V\},
\end{equation}
which is a $\C$-vector subspace in $V$.

For each $u \in V$ let us consider $\BB_{H,u} \in \Hol_0(V_{\LB})$ defined as follows.
$$\BB_{H,u}((v,c))=(v+u, \eb^{\pi H(v,u)}c) \ \forall v \in V, c \in \LB.$$
Clearly, $\BB_{H,0}$ is the {\sl identity automorphism} of $V_{\LB}$.
If $u_1, u_2 \in V$ then
$$\BB_{H,u_2}\circ \BB_{H,u_1}((v,c))=\left(v+u_1+u_2, \eb^{\pi H(v+u_1,u_2)} \eb^{\pi H(v,u_1)}c\right)=$$
\begin{equation}
\label{multGcirc}
\left(v+u_1+u_2,  \eb^{\pi H(u_1,u_2)} \eb^{\pi H(v,u_1+u_2)} c\right)=
\multB\left(\eb^{\pi H(u_1,u_2)}\right)\circ \BB_{H,u_1+u_2}((v,c)).
\end{equation}
This implies that in  $\Hol_0(V_{\LB})$ we have
$$\BB_{H,u_1} \BB_{H,u_2}=\multB\left(\eb^{\pi H(u_1,u_2)}\right)\circ \BB_{H,u_1+u_2}$$
and therefore 
$$\BB_{H,u_2}\BB_{H,u_1} \BB_{H,u_2}^{-1}\circ \BB_{H,u_1}^{-1}=\multB\left(\eb^{\pi H(u_1,u_2)}\right)
\left(\multB(\eb^{\pi H(u_2,u_1)})\right)^{-1}=\multB\left(\eb^{2\pi \ib E(u_1,u_2)}\right).$$
In particular, {\sl $\BB_{H,u_1}$ and  $\BB_{H,u_2}$ commute if and only if $E(u_1,u_2)\in \Z$.} 
In addition,  it follows from \eqref{multGcirc} applied to $u_1=u, u_2=-u$ that
\begin{equation}
\label{inverseBB}
\BB_{H,u}^{-1}=\multB(\eb^{-\pi H(u,u)})\circ \BB_{H,-u}, \ 
\left(\multB(\lambda)\circ\BB_{H,u}\right)^{-1}=\multB\left(\frac{\eb^{-\pi H(u,u)}}{\lambda}\right)\circ \BB_{H,-u}.
\end{equation}
We write $\tilde{\G}(H,V)$ for the subset
$$\{\multB(\lambda)\circ\BB_{H,u}\mid \lambda\in \C^{*}, u \in V\}\subset \Hol_0(V_{\LB}).$$
It follows from \eqref{multGcirc} that $\tilde{\G}(H,V)$  is the
subgroup of $\Hol_0(V_{\LB})$ generated by $\multB(\C^{*})$ and all $\BB_{H,u}$ ($u\in V$). Clearly  $\tilde{\G}(H)$ is included in the short exact sequence
\begin{equation}
\label{GHV}
1 \to \C^{*} \overset{\multB}{\longrightarrow} \tilde{\G}(H,V) \overset{\kappa}{\to} V \to 0
\end{equation}
where $\multB(\C^{*})$ is a central subgroup of  $\tilde{\G}(H,V)$ and
the surjective group homomorphism $\kappa: \tilde{\G}(H,V) \to V$ kills 
 $\multB(\C^{*})$ while
  $$\kappa(\BB_{H,u})=u \ \forall u\in V.$$
   In other words, each  $u \in V$ lifts to
$\BB_{H,u}\in \tilde{\G}(H,V)$. This implies that the commutator pairing
$$V \times V \to \multB(\C^{*})$$
attached to central extension \eqref{GHV}
 coincides with
$$u_1, u_2 \mapsto \multB\left(\eb^{2\pi \ib E(u_1,u_2)}\right) \ \forall u_1,u_2 \in V.$$
In particular, if $H=0$ then $E=0$ and $\tilde{\G}(H,V)$ is a {\sl commutative} group.
More generally, if $\Pi \subset V$ is an {\sl additive subgroup} in $V$ then we may define
$\tilde{\G}(H,\Pi)$ as the subgroup of $\Hol_0(V_{\LB})$ generated by $\multB(\C^{*})$ and all $\BB_{H,u}$ ($u\in \Pi$). 
Clearly  $\tilde{\G}(H,\Pi)=\kappa^{-1}(\Pi)$ is included in the short exact sequence
\begin{equation}
\label{GHPi}
1 \to\C^{*} \overset{\multB}{\longrightarrow} \tilde{\G}(H,\Pi) \overset{\kappa}{\to} \Pi \to 0
\end{equation}
where $\multB(\C^{*})$ is a central subgroup of  $\tilde{\G}(H,\Pi) $. Each $u \in \Pi$ lifts to
$\BB_{H,u}\in \tilde{\G}(H,\Pi)$ and the {\sl commutator pairing}
$$\Pi \times \Pi  \to \multB(\C^{*})$$
attached to central extension \eqref{GHPi}
coincides with
$$u_1, u_2 \mapsto \multB\left(\eb^{2\pi \ib E(u_1,u_2)}\right) \ \forall u_1,u_2 \in \Pi.$$
In particular, {\sl if the restriction of $H$ to $\Pi$ is identically $0$  then $\tilde{\G}(H,\Pi)$ is a commutative group.}  

It follows from \eqref{multGcirc} that $\tilde{\G}(H,\ker(H))$ is a {\sl central subgroup} in  $\tilde{\G}(H,V)$ 
\end{sect}
\begin{sect}
The aim of this  subsection and Subsection \ref{realcomplexLie} is to endow  $\tilde{\G}(H,V)$ with the natural structure of a connected {\sl real} Lie group and its certain subgroups $\tilde{\G}(H,\Pi)$ (including $\Pi=\ker(H)$) with the natural structure of a complex Lie group.  Let us consider the complex manifold
$V \times^{H} \C^{*}:=V \times \C^{*}$ endowed with the {\sl composition law}
$$\left(V \times^{H} \C^{*}\right)\times \left(V \times^{H} \C^{*}\right) \to V \times^{H} \C^{*},$$ 
\begin{equation}
\label{groupLAW}
 (u,\lambda), (v,\mu) \mapsto 
(u,\lambda)\circ (v,\mu):=
\left(u+v, \lambda \mu \eb^{\pi H(u,v)}\right).
\end{equation}
The bijectivity of the map
$$\Psi_H: V \times^H \C^{*} \to \tilde{\G}(H,V), \ (u,\lambda)\mapsto \multB(\lambda)\circ \BB_u$$
combined with \eqref{multGcirc} and \eqref{inverseBB} proves that the composition law \eqref{groupLAW} makes 
$V \times^{H} \C^{*}$ a group with identity element $(0,1)$ and the operation of taking the inverse defined by 
 the map
\begin{equation}
\label{inverseLAW}
V \times^{H} \C^{*} \to V \times^{H} \C^{*},
 (v,\lambda)\mapsto  (v,\lambda)^{-1}:= (-u, \eb^{-\pi H(u,u)}/\lambda).
\end{equation}
In addition, $\Psi_H$ is a group isomorphism. Formulas  \eqref{multGcirc} and \eqref{inverseBB} tell us that the group 
$V \times^H \C^{*}$ is actually a real Lie group with real structure induced by the natural complex structure on $V \times^{H} \C^{*}$.
 (However,  if $H=0$ then the real Lie group $V \times^H \C^{*}$ is actually a commutative {\sl complex} Lie group.) Clearly, $V \times^H \C^{*}$
 is included in the short exact sequence of real Lie groups
 \begin{equation}
 \label{GHVLie}
 1 \to \C^{*}\overset{\lambda\mapsto (0,\lambda)}{\longrightarrow} V \times^H \C^{*}\overset{(v,\lambda)\mapsto v}{\longrightarrow}V \to 0.
 \end{equation}
 Applying  $\Psi_H$ to \eqref{GHVLie}, we obtain  that \eqref{GHV} is actually a short exact sequence of real Lie groups.

Let $\Pi$ be a {\sl closed} additive subgroup of $V$. The third theorem of Cartan tells us that $\Pi$ is a real Lie subgroup of $V$. Clearly, 
$$\Pi\times^H \C^{*}:=\Pi\times\C^{*}\subset V\times\C^{*}=V\times^H\C^{*}$$ is a closed subgroup of $V\times^H\C^{*}$
and therefore is its real Lie subgroup. Notice that $\Psi_H(\Pi\times^H \C^{*})=\tilde{\G}(H,\Pi)$, which implies that
$\Psi_H: \Pi\times^H \C^{*} \to \tilde{\G}(H,\Pi)$ is a group isomorphism that provides  $\tilde{\G}(H,\Pi)$ with the structure of a real Lie group. 
Clearly, $\Pi \times^H \C^{*}$
 is included in the short exact sequence of real Lie groups
 \begin{equation}
 \label{GHPiLie}
 1 \to \C^{*}\overset{\lambda\mapsto (0,\lambda)}{\longrightarrow} \Pi \times^H \C^{*}\overset{(v,\lambda)\mapsto v}{\longrightarrow}\Pi \to 0.
 \end{equation}
 Applying  $\Psi_H$ to   \eqref{GHPiLie}, we obtain  that \eqref{GHPi} is actually a short exact sequence of real Lie groups.

\begin{rem}
\label{Pi0real}
Let $\Pi^0$ be the identity component of $\Pi$, which is a connected real Lie subgroup of $V$, i.e., is an $\R$-vector subspace of $V$. 
Then $\Pi^{0}\times^H \C^{*}$  is the connected component of $\Pi \times^H \C^{*}$ that contains the identity element   $(0,1)$ of the group law, i.e.,
the {\sl identity component} of   $\Pi \times^H \C^{*}$. It follows that $\tilde{\G}(H,\Pi^0)$ is the {\sl identity component} of $\tilde{\G}(H,\Pi)$.
This implies that $\Pi/\Pi^0$ is canonically isomorphic to the group $\tilde{\G}(H,\Pi)/\tilde{\G}(H,\Pi^{0})$ of connected components of $\tilde{\G}(H,\Pi)$.
\end{rem}
\end{sect}

\begin{ex}
\label{kerHnice}
If $\Pi=\ker(H)$ then the group law on $\ker(H)\times^H\C^{*}$ is 
$$(u,\lambda), (v,\mu) \mapsto 
(u,\lambda)\circ (v,\mu):=
\left(u+v, \lambda \mu \eb^{\pi H(u,v)}\right)=\left(u+v, \lambda \mu \eb^{\pi \cdot 0}\right)=(u+v,\lambda \mu),$$
since $H(u,v)=0$ for all $u,v\in \ker(H)$.
This means that $\ker(H)\times^H\C^{*}$ is
actually the {\sl direct product} $\ker(H)\times\C^{*}$
of complex Lie groups  $\ker(H)$ and $\C^{*}$; in particular, it is {\sl connected commutative}.
\end{ex}

\begin{sect}
\label{realcomplexLie}
{\sl Now and till the rest of this section let us assume that  $\Pi^{0}=\ker(H)$.}
Since $\ker(H)$ is a complex vector subspace in $V$, it is a complex Lie subgroup in $V$ and therefore $\Pi$ is also a closed complex Lie 
subgroup in $V$. This implies that $\Pi\times^H \C^{*}$ is a closed complex submanifold of $V\times^H\C^{*}$. Recall that  $\Pi\times^H \C^{*}$ is a 
 closed real Lie subgroup of the {\sl real} Lie group $V\times^H\C^{*}$. 
We claim that $\Pi\times^H \C^{*}$ is actually a {\sl complex} Lie group. 

\begin{lem}
\label{real0complex}
If  $\Pi^{0}=\ker(H)$ then the real Lie group $\Pi\times^H \C^{*}$ 
is the complex Lie group with respect to the structure of the complex manifold on $\Pi\times^H \C^{*}$ described above. In addition, \eqref{GHPiLie}  is a short exact sequence of complex Lie groups.
\end{lem}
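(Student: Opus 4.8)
The plan is to take the real Lie group structure on $\Pi\times^{H}\C^{*}$ that has already been constructed and to verify that, under the hypothesis $\Pi^{0}=\ker(H)$, the group law \eqref{groupLAW} and the inversion \eqref{inverseLAW} are holomorphic with respect to the complex manifold structure on $\Pi\times\C^{*}$. Since $\ker(H)$ is a $\C$-linear subspace of $V$ and equals $\Pi^{0}$, the group $\Pi$ is a closed complex Lie subgroup of $V$, so the addition $(u,v)\mapsto u+v$ on $\Pi$ and the multiplication $(\lambda,\mu)\mapsto\lambda\mu$ on $\C^{*}$ are automatically holomorphic; the only term that could fail to be holomorphic is the exponential factor $\eb^{\pi H(u,v)}$ appearing in \eqref{groupLAW} (and $\eb^{-\pi H(v,v)}$ in \eqref{inverseLAW}). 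The genuine obstacle is that $H$, being Hermitian, is antilinear in one of its arguments and hence is \emph{not} a holomorphic function on $V\times V$; this is exactly why the ambient group $V\times^{H}\C^{*}$ is in general only a real Lie group.

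The heart of the argument will be to show that the restriction of $H$ to $\Pi\times\Pi$ is \emph{locally constant}, so that its non-holomorphic nature becomes invisible. To see this I would decompose $\Pi$ into cosets of $\Pi^{0}=\ker(H)$ and write any $u,v\in\Pi$ as $u=w+k$, $v=w^{\prime}+k^{\prime}$ with $k,k^{\prime}\in\ker(H)$ and $w,w^{\prime}$ fixed representatives of the connected components containing $u$ and $v$. Expanding
\begin{equation*}
H(u,v)=H(w,w^{\prime})+H(w,k^{\prime})+H(k,w^{\prime})+H(k,k^{\prime}),
\end{equation*}
the last three terms vanish: $H(k,w^{\prime})=H(k,k^{\prime})=0$ directly from the definition \eqref{kerHCR} of $\ker(H)$, while $H(w,k^{\prime})=\overline{H(k^{\prime},w)}=0$ because $H$ is Hermitian and $k^{\prime}\in\ker(H)$. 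Thus $H(u,v)=H(w,w^{\prime})$ depends only on the connected components of $u$ and $v$, i.e. $H$ is constant on each connected component of $\Pi\times\Pi$.

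Granting this, the factor $\eb^{\pi H(u,v)}$ is locally constant, hence holomorphic, on $\Pi\times\Pi$, so the group law \eqref{groupLAW} is holomorphic; taking $v=u$ shows that $H(v,v)=H(w,w)$ is locally constant as well, so the inversion \eqref{inverseLAW} is holomorphic too, and therefore $\Pi\times^{H}\C^{*}$ is a complex Lie group. (On the identity component $\ker(H)\times\C^{*}$ this recovers the direct product computed in Example \ref{kerHnice}.) Finally, in \eqref{GHPiLie} the inclusion $\lambda\mapsto(0,\lambda)$ and the projection $(v,\lambda)\mapsto v$ are manifestly holomorphic group homomorphisms of complex Lie groups, so \eqref{GHPiLie} is a short exact sequence of complex Lie groups. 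I expect the only delicate point to be the vanishing of the cross terms above, which is precisely where the hypothesis $\Pi^{0}=\ker(H)$ is essential.
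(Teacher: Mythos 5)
Your proposal is correct and follows essentially the same route as the paper: the paper also checks holomorphy locally on the connected components $(u_0+\ker(H))\times\C^{*}$, observing that $H(u_0+u,v_0+v)=H(u_0,v_0)$ for $u,v\in\ker(H)$ so that the exponential factor is locally constant, and likewise for the inversion. Your explicit justification of the cross-term $H(w,k^{\prime})=\overline{H(k^{\prime},w)}=0$ via Hermitian symmetry is a point the paper leaves implicit, but the argument is the same.
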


\begin{proof}
We need to check that the maps \eqref{multGcirc} and \eqref{inverseBB}, being restricted to 
$\left(\Pi\times^H \C^{*}\right)\times \left(\Pi\times^H \C^{*}\right)$ and $\Pi\times^H \C^{*}$ respectively are holomorphic
(not just real analytic).  The complex analyticity condition could be checked locally, in the open neighborhoods 
$$(u_0+\ker(H))\times \C^{*} , (v_0+\ker(H))\times \C^{*}\subseteq \Pi\times^H \C^{*}\times\Pi\times^H \C^{*}$$
of points $(u_0,\lambda_0), (v_0,\mu_0)\in \Pi\times^H \C^{*}\times\Pi\times^H \C^{*}$. Then \eqref{multGcirc} gives us the composition law
$$(u_0+u,\lambda)\circ (v_0+v,\mu) = (u_0+v_0+u+v, \eb^{\pi H(u_0+u,v_0+w)}\lambda\mu)=$$
$$(u_0+v_0+u+v, \eb^{\pi(H(u_0,v_0)}\lambda\mu),$$
which is obviously holomorphic in $u,v, \in \ker(H)$, $\lambda,\mu\in \C^{*}$. Similarly, \eqref{inverseBB} gives us
the operation of taking the inverse
$$(u_0+u, \lambda) \mapsto (u_0+u, \lambda)^{-1}=(-u_0-u, \eb^{-\pi H(u_0+u,u_0+u)}/\lambda)=
(-u_0-u, \eb^{-\pi H(u_0,u_0)}/\lambda),$$
which is obviously holomorphic in $u \in \ker(H)$, $\lambda\in \C^{*}$. The second assertion of Lemma is also obvious.
\end{proof}

\begin{rem}
\label{realcomplexLie1} Let $\Pi^{0}=\ker(H)$.
\begin{itemize}
\item[(i)]
 Lemma \ref{real0complex} and the bijectivity of $\Psi_H$  allow us to endow the {\sl real} Lie group
  $\tilde{\G}(H,\Pi)=\Psi_H(\Pi\times^H\C^{*})$ with the compatible natural structure of a {\sl complex} Lie group, whose {\sl identity component} 
  $$\tilde{\G}(H,\Pi^{0})=\tilde{\G}(H,\ker(H))=\Psi_H(\ker(H)\times^H\C^{*})$$
   is a {\sl central subgroup} of $\tilde{\G}(H,\Pi)$ (and even of $\tilde{\G}(H,V)$). 
   \item[(ii)]
 The {\sl action map}
$$\tilde{\G}(H,\Pi)\times V_{\LB} \to V_{\LB},  \ \Psi_H(u,\lambda), (v,c)\mapsto \mult(\lambda)\circ \BB_u((v,c))=
(v+u, \lambda \eb^{\pi H(v,u)}c)$$
is {\sl holomorphic}. Indeed, it suffices to check that it is holomorphic at all $\Psi_H(u,c)$ from the open subgroup $\tilde{\G}(H,\ker(H))$. In this case $H(v,u)=0$ and we get the map
$\left(\ker(H)\times \C^{*}\right)\times  V_{\LB} \to V_{\LB}, \ (u,\lambda), (v,c)\mapsto (v+u,\lambda c)$, 
which is obviously holomorphic. Clearly,  
\eqref{GHPi} is a short exact sequence of {\sl complex} Lie groups.
\end{itemize}
\end{rem}
\end{sect}

\section{Hermitian forms, lattices, line bundles}
\label{AppelHumbert}
In what follows, a {\sl lattice} is an additive discrete subgroup in a finite-dimensional complex (or real) vector space.

Let $X$ be a positive-dimensional complex torus, i.e., $X=V/L$ where $V\cong \C^g$ a finite-dimensional
complex vector space of positive dimension $g$ and $L\subset V$ a  lattice of maximal possible rank $2g$.
We view $X$ as a connected complex commutative compact Lie group. 
 By Appel-Humbert theorem \cite{Mumford,Kempf}, holomorphic line bundles $\LL$ on $X$ are 
classified (up to an isomorphism) by {\sl A.-H. data} $(H,\alpha)$ where $H: V \times V \to \C$ is an Hermitian form on $V$ and $\alpha: L \to \UU(1)$ is a map
from $L$ to the {\sl unit circle} $\UU(1)$ that enjoy the following properties:
\begin{equation}
\label{integrality}
E(l_1,l_2):=\IM (H(l_1,l_2))\in \Z, \ 
\alpha(l_1+l_2)=(-1)^{E(l_1,l_2)} \alpha(l_1) \alpha(l_2) \ \forall
l_1, l_2 \in L. 
\end{equation}
We denote by $\LL(H,\alpha)$ the corresponding line bundle on $X$, whose definition is recalled in  Subsection \ref{AHline}.

\begin{sect}
\label{actions}
Let us consider the following  discrete action of the group $L$ on 
 $V_{\LB}$ by holomorphic automorphisms. An element $l \in L$ acts as
$$\AA_{H,l}: V_{\LB} \to V_{\LB}, \ (v,c) \mapsto \left(v+l, \alpha(l)\eb^{\pi H(v,l)+\frac{1}{2}\pi H(l,l)}\cdot c\right) \  \forall v\in V, c\in \C.$$
In other words,
$$\AA_{H,l}=\multB\left(\alpha(l)\eb^{\frac{1}{2}\pi H(l,l)}\right)\BB_{H,l}\in \Hol_0(V_{\LB}).$$
In particular,
$$\AA_{H,l}\in  \tilde{\G}(H,L) \ \forall l \in L.$$
It is well known (and could be easily checked by direct computations) that 
$$\AA_{H,l_1}\circ \AA_{H,l_2}=\AA_{H, l_1+l_2} \ \forall l_1,l_2 \in L.$$
In particular, the map
\begin{equation}
\label{AAL}
\AA^{L}: L \to \Hol_0(V_{\LB}),  \ l \mapsto \AA_{H,l}
\end{equation}
in an {\sl injective} group homomorphism, whose image we denote by 
\begin{equation}
\label{tildeL}
\tilde{L}=\tilde{L}(H,\alpha):=\AA^{L}(L)\subset \tilde{\G}(H,V)\subset \Hol_0(V_{\LB}).
\end{equation}
 Clearly,
$\tilde{L}$ is a subgroup of $\Hol_0(V_{\LB})$ that meets $\multB(\C^{*})$ only at the identity element. In addition,
\begin{equation}
\label{GHL}
\tilde{L}=\tilde{L}(H,\alpha)\subset \tilde{\G}(H,L)\subset \Hol_0(V_{\LB}).
\end{equation}
It is also clear that for each additive subgroup $\Pi\subset V$ we have
\begin{equation}
\label{GHLPi}
\tilde{L}\bigcap  \tilde{\G}(H,\Pi)= \AA^{L}(\Pi\bigcap L)=\{\AA_{H,l}\mid l\in \Pi\bigcap L\}.
\end{equation}
\end{sect}

\begin{sect}
\label{AHline}

The holomorphic line bundle $\LL(H,\alpha) \to X$ is defined \cite[Sect. 2]{Mumford} as the quotient 
$$V_{\LB}/\tilde{L}=V_{\LB}/L \to V/L=X.$$
 In particular, $V_{\LB}/\tilde{L}$ is the total space of the holomorphic line bundle $\LL(H,\alpha)$.
 
 In the obvious notation,
 \begin{equation}
 \label{tensorL}
 \LL(H,\alpha)\otimes \LL(H^{\prime},\alpha^{\prime})\cong 
 \LL(H+H^{\prime},\alpha \alpha^{\prime}).
 \end{equation}
 In particular, $\mathbf{1}_X$ is isomorphic to $\LL(0,\alpha_0)$ where
 \begin{equation}
 \label{alpha0}
 \alpha_0: L \to \{1\}\subset \UU(1)
 \end{equation}
 is the trivial character of $L$.
 \begin{defn}
 One says \cite{Mumford,Kempf,BL} that a holomorphic line bundle on $X$ lies in $\Pic^0(X)$ if it is isomorphic
 to $\LL(0,\alpha)$ for some $\alpha$, i.e., the corresponding Hermitian form is $0$. 
 \end{defn}
\end{sect}

\begin{sect}
\label{thetaL}
We keep the notation and assumptions of Section \ref{AppelHumbert}.
 Since $L$ spans the $\R$-vector space $V$, we have
$$\ker(H)=\{\tilde{x}\in V\mid H(\tilde{x},l)=0 \ \forall l\in L\}.$$
Let us put 
\begin{equation}
\label{LEper}
L_E^{\perp}:= \{\tilde{x}\in V\mid E(\tilde{x},l)\in \Z \ \forall l\in L\}.
\end{equation}
Clearly, $L_E^{\perp}$ is a closed (not necessarily connected)  real Lie subgroup of $V$ that contains $L$ as a discrete subgroup. In particular, the {\sl identity component} $\left(L_E^{\perp}\right)^{0}$ of 
$L_E^{\perp}$ is an $\R$-vector subspace of $V$.
\end{sect}
\begin{lem}
\label{identity}
\begin{itemize}
\item[(i)]
$\left(L_E^{\perp}\right)^{0}=\ker(H)$.
In particular, $\left(L_E^{\perp}\right)^{0}$ is a $\C$-vector subspace of $V$
and $L_E^{\perp}$ is a closed complex Lie subgroup of $V$. 
\item[(ii)]
$\tilde{\G}(H,L_E^{\perp})$ is a complex Lie group that is included in the short exact sequence of complex Lie groups
\begin{equation}
\label{GHLEperp}
1 \to\C^{*} \overset{\multB}{\to} \tilde{\G}(H,L_E^{\perp}) \overset{\kappa}{\to} L_E^{\perp} \to 0
\end{equation}
defined in \eqref{GHPi} for $\Pi=L_E^{\perp}$.

\item[(iii)]
$\tilde{\G}(H,\ker(H))=\kappa^{-1}(\ker(H))$ is the identity component of $\tilde{\G}(H,L_E^{\perp})$,
which is a central clopen subgroup of $\tilde{\G}(H,L_E^{\perp})$ containing $\multB(\C^{*})$ and included
 in the short exact sequence of complex Lie groups
\begin{equation}
\label{GHLieKerH}
1 \to\C^{*} \overset{\multB}{\to} \tilde{\G}(H,\ker(H)) \overset{\kappa}{\to} \ker(H) \to 0 
\end{equation}
defined in \eqref{GHPi} for $\Pi=\ker(H)$
that is induced from \eqref{GHLEperp} by 
 $\ker(H)\subset L_E^{\perp}$.
\item[(iv)]
The action map
$\tilde{\G}(H,L_E^{\perp}) \times V_{\LB} \to V_{\LB}$
is holomorphic.
\end{itemize}
\end{lem}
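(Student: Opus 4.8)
The plan is to reduce everything to the general framework for $\tilde{\G}(H,\Pi)$ developed in Section \ref{prel}, applied to the two closed additive subgroups $\Pi = L_E^{\perp}$ and $\Pi = \ker(H)$. The only genuinely new computation is part (i); parts (ii)--(iv) will then follow by quoting Lemma \ref{real0complex} and Remarks \ref{Pi0real} and \ref{realcomplexLie1} almost verbatim, so I treat (i) as the substance of the lemma.

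For (i), I would identify the identity component $W := (L_E^{\perp})^{0}$, which by Subsection \ref{thetaL} is already known to be an $\R$-vector subspace of $V$, and show that $W = \ker(H)$ by two inclusions. First, $\ker(H) \subseteq W$: for $\tilde{x} \in \ker(H)$ one has $E(\tilde{x},l) = \IM(H(\tilde{x},l)) = 0 \in \Z$ for every $l \in L$, so $\ker(H) \subseteq L_E^{\perp}$; being a connected subgroup through $0$, it lies in the identity component $W$. Conversely, for $\tilde{x} \in W$ I would exploit that $W$ is a vector space: $t\tilde{x} \in W \subseteq L_E^{\perp}$ for all $t \in \R$, so $t\,E(\tilde{x},l) = E(t\tilde{x},l) \in \Z$ for all real $t$ and all $l \in L$, which forces $E(\tilde{x},l) = 0$. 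Since $L$ spans $V$ over $\R$ and $E$ is $\R$-bilinear, this gives $E(\tilde{x},v) = 0$ for all $v \in V$, and then \eqref{ib} yields $H(\tilde{x},v) = E(\tilde{x},\ib v) + \ib\,E(\tilde{x},v) = 0$ for all $v$, i.e. $\tilde{x} \in \ker(H)$. This proves $W = \ker(H)$, so $(L_E^{\perp})^{0}$ is the $\C$-subspace $\ker(H)$. Since $L_E^{\perp}/(L_E^{\perp})^{0}$ is discrete, $L_E^{\perp}$ is the disjoint union of the cosets $\tilde{x} + \ker(H)$, each a complex affine subspace, whence $L_E^{\perp}$ is a closed complex Lie subgroup of $V$.

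With (i) established, the rest is bookkeeping. Part (ii): $L_E^{\perp}$ is a closed additive subgroup with $(L_E^{\perp})^{0} = \ker(H)$, so Lemma \ref{real0complex} with $\Pi = L_E^{\perp}$ endows $\tilde{\G}(H,L_E^{\perp})$ with the structure of a complex Lie group and turns \eqref{GHLEperp} into a short exact sequence of complex Lie groups. Part (iii): apply the same framework to $\Pi = \ker(H)$, which is its own identity component, to get that $\tilde{\G}(H,\ker(H))$ is a complex Lie group fitting into \eqref{GHLieKerH}; by Remark \ref{Pi0real} it is exactly the identity component $\tilde{\G}(H,(L_E^{\perp})^{0})$ of $\tilde{\G}(H,L_E^{\perp})$, hence clopen, and it contains $\multB(\C^{*}) = \kappa^{-1}(0)$ because $0 \in \ker(H)$; its centrality in $\tilde{\G}(H,V)$ (a fortiori in $\tilde{\G}(H,L_E^{\perp})$) is the commutator computation at the end of Subsection \ref{BH}. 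Part (iv) is precisely Remark \ref{realcomplexLie1}(ii) for $\Pi = L_E^{\perp}$, whose standing hypothesis $(L_E^{\perp})^{0} = \ker(H)$ is supplied by (i).

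The only real obstacle is the reverse inclusion $W \subseteq \ker(H)$ in (i). The two ideas making it work are that stability of the subspace $W$ under real scaling upgrades the integrality condition $E(\tilde{x},l)\in\Z$ to the vanishing condition $E(\tilde{x},l)=0$, and that $\R$-spanning of $V$ by $L$ combined with the identity \eqref{ib} converts the vanishing of $E(\tilde{x},\cdot)$ into the vanishing of $H(\tilde{x},\cdot)$. Everything else is a direct citation of the Lie-group structure results of Section \ref{prel}.
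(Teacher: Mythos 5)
Your proof is correct and follows essentially the same route as the paper's: part (i) is the only substantive step, established via the scaling argument that upgrades $E(\tilde{x},l)\in\Z$ to $E(\tilde{x},l)=0$ on the identity component, the $\R$-spanning of $V$ by $L$, and formula \eqref{ib}, while (ii)--(iv) are quoted from Lemma \ref{real0complex} and Remarks \ref{Pi0real} and \ref{realcomplexLie1} exactly as in the paper. The only cosmetic difference is that the paper routes (i) through the auxiliary subspace $V_0=\{v\in V\mid E(l,v)=0\ \forall l\in L\}$, first showing $V_0=\ib V_0=\ker(H)$ and then sandwiching $\left(L_E^{\perp}\right)^{0}$ between $\ker(H)$ and $V_0$, whereas you prove the two inclusions directly (and in doing so make explicit the scaling step the paper labels ``Clearly'').
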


\begin{proof}
Clearly,
\begin{equation}
\label{V0}
\left(L_E^{\perp}\right)^{0}\subset  \{v\in V\mid E(l,v)=0 \ \forall l\in L\}=:V_0.
\end{equation}
Since $E$ is $\R$-bilinear, $V_0$ is a real vector subspace of $V$.  
In light of first formula of \eqref{ib}, $V_0=\ib V_0$, i.e., $V_0$ is a {\sl complex} vector subspace of $V$.
Since $L$ spans $V$ over $\R$ and $E$ is $\R$-bilinear,
\begin{equation}
\label{V0E}
V_0=\{v\in V\mid E(u,v)=0 \ \forall u \in V\}.
\end{equation}
Since $V_0$ is a $\C$-vector subspace, \eqref{V0E} and second formula of \eqref{ib} imply that
\begin{equation}
\label{V0H}
V_0=\{v\in V\mid H(u,v)=0 \ \forall u \in V\}=\ker(H).
\end{equation}
Now \eqref{V0E} and \eqref{V0H} imply that
$$V_0=\ker(H)\subset \left(L_E^{\perp}\right)^{0},$$
because $\ker(H)$ is connected. In light of \eqref{V0},
$$\ker(H)=V_0\supset \left(L_E^{\perp}\right)^{0}.$$
This implies that $\ker(H)=\left(L_E^{\perp}\right)^{0}$, which 
 proves (i). Now assertions (ii), (iii) and (iv) follow from Remark \eqref{realcomplexLie1}.
\end{proof}

\begin{sect}
\label{normalForm}
Let us put
$$L_0:=L\bigcap \ker(H)=\{l\in L\mid E(l,v)=0 \ \forall v \in V\}=
\{l\in L\mid E(l,m)=0 \ \forall m \in L\}.$$
Then $L_0$ is a free {\sl saturated} $\Z$-submodule  of $L$ and $E$ induces a {\sl nondegenerate} alternating bilinear form on $L/L_0$. In particular, the rank of the free $\Z$-module $L/L_0$ is even. Since the rank of $L$ is even, the rank of $L_0$ is also even. Let $2 g_0$ be the rank of $L_0$. Then the rank of $L/L_0$ is $2(g-g_0)$. Notice also that since $L_0$ is a  lattice in $\ker(H)$,
$$2g_0 \le \dim_{\R}(\ker(H)).$$
Since $L_0$ is saturated in $L$, there exists a saturated free $\Z$-submodule $L_1\subset L$
of rank $2g-2g_0$ such that
$$L=L_0\oplus L_1.$$
This implies that
$$V=L\otimes\R=(L_0\otimes\R)\oplus (L_1\otimes\R).$$
Clearly, the restriction
$$E\bigm|_{L_1}: L_1 \times L_1 \to \Z$$
of $E$ to $L_1$ is a {\sl nondegenerate} alternating bilinear form. This implies that the restriction 
 $$E\bigm|_{L_1\otimes\R}: (L_1\otimes\R)\times  (L_1\otimes\R) \to \R$$
  is a {\sl nondegenerate} alternating $\R$-bilinear form. It follows that
 $$ (L_1\otimes\R)\bigcap \ker(H)=\{0\}$$
 and therefore
 $$2g=\dim_{\R}(V) \ge \dim_{\R}( (L_1\otimes\R))+\dim_{\R}(\ker(H))=$$
 $$ (2g-2g_0)+\dim_{\R}(\ker(H)) \ge   (2g-2g_0)+2g_0 =2g.$$
 This implies that $\dim_{R}(\ker(H))=2g_0$, i.e., $L_0$ is a  lattice of maximal rank in the real vector space $V$ and
 $$\ker(H)=L_0\otimes \R.$$
 \end{sect}
 
 \begin{rem}
 \label{alphaL0}
 It follows from \eqref{integrality} that the restriction of $\alpha$ to $L_0$ is a group homomorphism, i.e.,
 \begin{equation}
 \label{AlphaL0}
 \alpha(l_1+l_2)=\alpha(l_1)\alpha(l_2) \ \forall l_1,l_2\in L_0.
 \end{equation}
 \end{rem}
 
 \section{Theta groups}
 \label{nonzeroH}
 We keep the notation and assumptions of Section \ref{AppelHumbert}.
 \begin{sect}
 \label{normalForm1}
 Suppose that $L\ne L_0$, i.e., $E \ne 0$, i.e., $H\ne 0$. This means that $L_1$ is a free $\Z$-module of {\sl positive} rank $2(g-g_0)$.  Let us  choose once and for all a {\sl basis}
 $\{\bar{l}_1, \dots , \bar{l}_{2g-2g_0}\}$ of $L_1$ and consider the {\sl skew symmetric nondegenerate}  square matrix $\tilde{E}$ of $E\bigm|_{L_1}$ attached to this basis, whose order is  $2g-2g_0$  and entries are
 \begin{equation}
 \label{tildeE}
 \tilde{E}_{ij}:=E\bigm|_{L_1}(\bar{l}_i,\bar{l}_j)=E(\bar{l}_i,\bar{l}_j)\in \Z
 \end{equation}
 The  determinant $\det(E\bigm|_{L_1})$ of $\tilde{E}$  is a nonzero integer that does {\sl not depend} on a choice of the basis. Since $\tilde{E}$  is skew symmetric, $\det(E\bigm|_{L_1})$ is the square 
 of the {\sl pfaffian} of  $\tilde{E}$. Since all the entries of $\tilde{E}$ are integers, its pfaffian is also an integer and therefore $\det(E\bigm|_{L_1})$ is a square in $\Z$; in particular, it is a {\sl positive integer}. Its square root $\sqrt{\det(E\bigm|_{L_1})}$ will play a prominent role in Section \ref{JordanTheta}. On the other hand, $\det(E\bigm|_{L_1})$ 
  admits the following well known  interpretation.
 Let us put
 $$L_{1,E}^{\perp}=\{\tilde{x}\in  L_1\otimes\R\mid E(\tilde{x},l)\in \Z \ \forall l\in L\}.$$
 The nondegeneracy of $E\bigm|_{L_1}$ implies that
 $L_{1,E}^{\perp}$ is a free $\Z$-module of rank $2g-2g_0$ that is contained in $L_1\otimes \Q$ and contains $L_1$ as a subgroup of finite index.  It is well known that 
 \begin{equation}
 \label{indexL1L1}
 [L_{1,E}^{\perp}:L_1]=\#(L_{1,E}^{\perp}/L_1)=\det(E\bigm|_{L_1}).
 \end{equation}
Let us also point out the following obvious but useful equality:
\begin{equation}
\label{LEperp}
 L_{E}^{\perp}=\ker(H)\oplus L_{1,E}^{\perp}=(L_0\otimes\R)\oplus L_{1,E}^{\perp}.
 \end{equation}
In order to get an explicit description of the finite  {\sl discriminant group} $L_{1,E}^{\perp}/L_1$, notice that
  the structure theorem for alternating nongenerate  bilinear forms over $\Z$ implies the existence of a {\sl basis}  
$$e_1,f_1, \dots , e_{g-g_0},f_{g-g_0} \in L_1$$
of the free $\Z$-module $L_1$ 
and positive integers $d_1(E), \dots, d_{g-g_0}(E)$ that enjoy the following properties.
Each $d_i(E)$ divides $d_{i+1}(E)$ (if $1\le i<g_0$),
$$L_1=\oplus_{i=1}^{g-g_0}\left(\Z\cdot e_i\oplus\Z\cdot f_i\right);$$
$$E(e_i,f_j)=0 \ \text{ if } i\ne j; E(e_i,f_i)=d_i(E) \ \forall i.$$
(See  also \cite[pp. 7-8]{BL}.)
It follows that
     $$\det(\tilde{E}) =\left(\prod_{i=1}^{g-g_0}d_i(E)\right)^2,$$
$$L_{1,E}^{\perp}=\oplus _{i=1}^{g-g_0}\frac{1}{d_i(E)}\left(\Z\cdot e_i\oplus\Z\cdot f_i\right).$$
If we define free rank two $\Z$-submodules
$$U_i:= \Z\cdot e_i\oplus\Z\cdot f_i \subset L_1$$
then we get a direct {\sl orthogonal} (with respect to $E$)  splittings
\begin{equation}
\label{splitUs}
L=L_0\oplus L_1, \ L_1= \oplus_{i=1}^{g-g_0}U_i, 
\ L_{1,E}^{\perp}= \oplus_{i=1}^{g-g_0}\frac{1}{d_i(E)} U_i.
\end{equation}

In addition,
\begin{equation}
\label{splitUi}
E\left(\frac{1}{d_i(E)}e_i, \frac{1}{d_i(E)}f_i\right)=\frac{1}{d_i(E)} \ \forall i;  E\left(\frac{1}{d_i(E)}U_i,\frac{1}{d_j(E)}U_j\right)=\{0\} \ \forall i\ne j.
\end{equation}

It follows from \eqref{splitUs}   that
\begin{equation}
\label{splitU}
L_{1,E}^{\perp}/L_1=\oplus _{i=1}^{g-g_0}\left(\frac{1}{d_i(E)}U_i\right)/U_i \cong 
\oplus _{i=1}^{g-g_0}\left(\Z/d_i(E)\Z\right)^2, \ \#(L_{1,E}^{\perp}/L_1)=\left(\prod_{i=1}^{g-g_0}d_i(E)\right)^2=\det(\tilde{E}).
\end{equation}
It also follows from \eqref{splitUs} that
\begin{equation}
\label{XL12}
X=V/L\supset  L_E^{\perp}/L=(\ker(H)/L_0)\oplus \left(L_{1,E}^{\perp}/L_1\right)=
(\ker(H)/L_0)\oplus\left( \oplus _{i=1}^{g-g_0}\left(\frac{1}{d_i(E)}U_i\right)/U_i\right).
\end{equation}
(See  also \cite[pp. 7-8]{BL}.)
\end{sect}
\begin{rem}
\label{pi0K}
Suppose that $H \ne 0$.
It follows from \cite[pp. 7-8]{Kempf} that
\begin{equation}
\label{KLalpha}
K(\LL(H,\alpha))=L_E^{\perp}/L\subset X.
\end{equation}
  Now \eqref{XL12} implies that $\ker(H)/L_0$ is the {\sl identity component}  $K(\LL(H,\alpha))^{0}$ of the complex Lie group $K(\LL(H,\alpha))$ while
the group $K(\LL(H,\alpha))/K(\LL(H,\alpha))^{0}$ 
 is isomorphic to $L_{1,E}^{\perp}/L_1$
and therefore 
\begin{equation}
\label{KLalpha3}
\#\left(K(\LL(H,\alpha))/K(\LL(H,\alpha))^{0}\right)=\#\left(L_{1,E}^{\perp}/L_1\right)=\det(E\bigm|_{L_1})=\left(\prod_{i=1}^{g-g_0}d_i(E)\right)^2.
\end{equation}
\end{rem}

\begin{sect}
\label{finiteGroupPairing}
Let us consider the alternating bilinear pairing
\begin{equation}
\label{pairingL1}
\mathbf{e}_E: L_{1,E}^{\perp}/L_1\times L_{1,E}^{\perp}/L_1 \to \C^{*},
\  (v_1+L_1, v_2+L_1)\mapsto \eb^{2\pi\mathbf{i}E(v_1,v_2)} \ \forall v_1+L_1,v_2+L_1 \in L_{1,E}^{\perp}/L_1.
\end{equation}
It follows from \eqref{splitUi} and \eqref{splitUs} that $\mathbf{e}_E$ is a {\sl nondegenerate pairing}.
\end{sect}

\begin{lem}
\label{divn}
Suppose that $H \ne 0$. Let $n$ be a positive integer such that
\begin{equation}
\label{nE}
E(l_1,l_2)\in n\Z \ \forall l_1,l_2 \in L.
\end{equation}
Then  $\#(L_{1,E}^{\perp}/L_1)$ is divisible by $n^2$.
\end{lem}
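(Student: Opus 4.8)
The plan is to exploit the explicit orthogonal decomposition of the discriminant group established in \eqref{splitU}, namely
$$
L_{1,E}^{\perp}/L_1 \cong \bigoplus_{i=1}^{g-g_0}\left(\Z/d_i(E)\Z\right)^2,
\qquad \#(L_{1,E}^{\perp}/L_1)=\left(\prod_{i=1}^{g-g_0}d_i(E)\right)^2,
$$
together with the description of the elementary divisors $d_i(E)$ via the symplectic basis $e_1,f_1,\dots,e_{g-g_0},f_{g-g_0}$ of $L_1$ satisfying $E(e_i,f_i)=d_i(E)$. Since the stated quantity $\#(L_{1,E}^{\perp}/L_1)$ is the \emph{square} $\left(\prod_i d_i(E)\right)^2$, in order to prove divisibility by $n^2$ it suffices to prove that the product $\prod_{i=1}^{g-g_0}d_i(E)$ is divisible by $n$; in fact I expect the stronger and cleaner statement that \emph{each} $d_i(E)$ is divisible by $n$, which immediately gives divisibility of the product (hence of its square) by $n^{g-g_0}$, and in particular by $n$.

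First I would observe that the hypothesis \eqref{nE} says that $E$ takes values in $n\Z$ on all of $L$, and in particular on the submodule $L_1 \subset L$. Applying this to the chosen symplectic basis vectors $e_i,f_i \in L_1$ gives
$$
d_i(E)=E(e_i,f_i)\in n\Z \quad \forall i,
$$
so that $n \mid d_i(E)$ for every $i$. This is the heart of the argument and is essentially immediate once the symplectic normal form of Subsection \ref{normalForm1} is in hand.

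From $n \mid d_i(E)$ for all $i$ I would then conclude
$$
n^2 \mid d_i(E)^2 \mid \left(\prod_{j=1}^{g-g_0}d_j(E)\right)^2 = \#(L_{1,E}^{\perp}/L_1),
$$
using that $g-g_0\ge 1$ (guaranteed by the standing assumption $H\ne 0$, equivalently $L\ne L_0$, so that $L_1$ has positive rank $2(g-g_0)$). This gives the desired divisibility. I do not anticipate any genuine obstacle here: the only point requiring mild care is to make sure the symplectic basis of $L_1$ really consists of elements of $L$ (so that \eqref{nE} applies to them), which is clear because $L_1 \subset L$ by the splitting $L=L_0\oplus L_1$, and that the normal-form data $d_i(E)$ do not depend on choices — but this is already recorded in Subsection \ref{normalForm1}. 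Thus the proof reduces to a one-line application of the integrality hypothesis to the standard symplectic basis, followed by squaring.
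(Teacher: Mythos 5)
Your proof is correct. It differs from the paper's argument in which piece of Subsection \ref{normalForm1} it invokes: the paper works with the Gram matrix $\tilde{E}$ of $E\bigm|_{L_1}$ in an \emph{arbitrary} basis of $L_1$, notes via \eqref{tildeE} and the hypothesis \eqref{nE} that every entry of this order-$2(g-g_0)$ matrix lies in $n\Z$, and concludes directly that $\det(\tilde{E})=\#(L_{1,E}^{\perp}/L_1)$ is divisible by $n^{2(g-g_0)}$, hence by $n^2$; no symplectic normal form is needed, only the index formula \eqref{indexL1L1}. You instead pass through the elementary divisors, observing that $d_i(E)=E(e_i,f_i)\in n\Z$ because $e_i,f_i\in L_1\subset L$, and then square. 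Both routes yield the same (stronger) divisibility by $n^{2(g-g_0)}$; yours additionally records the finer structural fact that $n$ divides each elementary divisor $d_i(E)$, at the cost of relying on the normal-form decomposition \eqref{splitU}, while the paper's determinant computation is marginally more self-contained. There is no gap in your argument: the basis vectors $e_i,f_i$ do lie in $L$, so \eqref{nE} applies to them, and $g-g_0\ge 1$ is exactly the content of $H\ne 0$.
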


\begin{proof}
Since $H \ne 0$, we have $g_0 <g$, i.e. $g-g_0 \ge 1$. It follows from \eqref{tildeE} that all the entries of the order $2(g-g_0)$ square matrix $\tilde{E}$  
are divisible by $n$ in $\Z$ and therefore $\det(\tilde{E})$ is divisible by $n^{2(g-g_0)}$ in $\Z$. This implies that 
$\#(L_{1,E}^{\perp}/L_1)=\det(\tilde{E})$  is divisible by $n^{2(g-g_0)}$
 and therefore is divisible by $n^2$.
\end{proof}

\begin{thm}
\label{quotient}
\begin{itemize}
\item[(i)]
$\tilde{L}=\tilde{L}(H,\alpha)$ is a central discrete subgroup of $\tilde{\G}(H,L_E^{\perp})$ that  meets $\multB (\C^{*})$ only at the identity.
\item[(ii)]
$$\tilde{L}_0:=\tilde{L}\bigcap \tilde{\G}(H,\ker(H))=\AA^{L}(L_0)=\{\AA_{H,l}\mid l\in L_0\}$$
is a discrete subgroup in the commutative connected complex Lie group $\tilde{\G}(H,\ker(H))$.
\item[(iii)]
 The commutative connected complex Lie group  $\tilde{\G}(H,\ker(H))/\tilde{L}_0$ is isomorphic to the
 quotient $\left(\ker(H)\times \C^{*}\right)/\bar{L}_0$ where 
 $\bar{L}_0:=\{(l,\alpha(l))\mid l\in L_0\}$ is a discrete subgroup in $\ker(H)\times \C^{*}$.
\end{itemize}
\end{thm}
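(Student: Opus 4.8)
The plan is to deduce all three assertions from the explicit description of $\tilde{L}$ via $\AA^{L}$ together with the group-theoretic structure of $\tilde{\G}(H,L_E^{\perp})$ recorded in Lemma~\ref{identity}.

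For (i), I first note that the integrality condition \eqref{integrality} gives $E(l,m)\in\Z$ for all $l,m\in L$, hence $L\subset L_E^{\perp}$ and consequently $\tilde{L}=\AA^{L}(L)\subset\tilde{\G}(H,L)\subset\tilde{\G}(H,L_E^{\perp})$. That $\tilde{L}$ meets $\multB(\C^{*})$ only at the identity was already observed when $\AA^{L}$ was introduced. For centrality I would use that $\tilde{\G}(H,L_E^{\perp})$ is generated by $\multB(\C^{*})$ and the $\BB_{H,u}$ with $u\in L_E^{\perp}$, so it suffices to check that each $\AA_{H,l}$ commutes with every such generator. Writing $\AA_{H,l}=\multB\bigl(\alpha(l)\eb^{\frac12\pi H(l,l)}\bigr)\,\BB_{H,l}$ and using that $\multB(\C^{*})$ is central, the relevant commutator equals the commutator pairing attached to \eqref{GHLEperp}, namely $\multB\bigl(\eb^{2\pi\ib E(l,u)}\bigr)$; since $l\in L$ and $u\in L_E^{\perp}$ force $E(l,u)\in\Z$, this commutator is trivial. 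Discreteness is cleanest in the coordinates furnished by $\Psi_H$: under the identification $\tilde{\G}(H,L_E^{\perp})\cong L_E^{\perp}\times\C^{*}$ as a complex manifold, $\tilde{L}$ is the set $\{(l,\alpha(l)\eb^{\frac12\pi H(l,l)})\mid l\in L\}$, whose first coordinates range over the discrete subgroup $L\subset L_E^{\perp}$, so each of its points is isolated.

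For (ii), I would simply specialize the general identity \eqref{GHLPi} to $\Pi=\ker(H)$. Since $L_0=L\cap\ker(H)$ by its definition in Subsection~\ref{normalForm}, this yields $\tilde{L}\cap\tilde{\G}(H,\ker(H))=\AA^{L}(L_0)=\{\AA_{H,l}\mid l\in L_0\}$. That $\tilde{\G}(H,\ker(H))$ is a commutative connected complex Lie group is exactly Example~\ref{kerHnice} transported through $\Psi_H$, and discreteness of $\tilde{L}_0$ follows either as a subgroup of the discrete group $\tilde{L}$ or directly from discreteness of the lattice $L_0$ in $\ker(H)$.

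For (iii), the key computational observation is that for $l\in L_0\subset\ker(H)$ one has $H(l,l)=0$, so the scalar factor in $\AA_{H,l}=\multB\bigl(\alpha(l)\eb^{\frac12\pi H(l,l)}\bigr)\BB_{H,l}$ collapses and $\AA_{H,l}=\Psi_H(l,\alpha(l))$. Hence, under the isomorphism $\Psi_H$ and the identification of Example~\ref{kerHnice}, $\tilde{\G}(H,\ker(H))$ becomes the direct product $\ker(H)\times\C^{*}$ and $\tilde{L}_0$ becomes exactly $\bar{L}_0=\{(l,\alpha(l))\mid l\in L_0\}$. By Remark~\ref{alphaL0} the restriction of $\alpha$ to $L_0$ is a homomorphism, so $\bar{L}_0$ is the graph of a homomorphism $L_0\to\C^{*}$ and is therefore a discrete subgroup of $\ker(H)\times\C^{*}$; passing to quotients gives the asserted isomorphism $\tilde{\G}(H,\ker(H))/\tilde{L}_0\cong(\ker(H)\times\C^{*})/\bar{L}_0$ of complex Lie groups. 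None of the steps is genuinely hard; the only point requiring care is the centrality in (i), where one must correctly invoke the defining integrality property of $L_E^{\perp}$ rather than that of $L$, together with the bookkeeping that the multiplier $\multB(\cdot)$ distinguishing $\AA_{H,l}$ from $\BB_{H,l}$ is central and hence irrelevant to the commutator.
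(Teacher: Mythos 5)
Your proposal is correct and follows essentially the same route as the paper: centrality via the commutator-pairing criterion from the end of Subsection 2.3 together with $E(L,L_E^{\perp})\subset\Z$, part (ii) via \eqref{GHLPi} applied to $\Pi=\ker(H)$, and part (iii) via Example \ref{kerHnice}. The only differences are cosmetic: you verify discreteness directly in the $\Psi_H$-coordinates (the paper instead argues by contradiction using that $\kappa(\tilde{L})=L$ is discrete and $\tilde{L}\cap\ker(\kappa)$ is trivial), and you spell out the computation $\AA_{H,l}=\Psi_H(l,\alpha(l))$ for $l\in L_0$ that the paper leaves implicit in (iii).
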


\begin{proof}
We have already seen that $\tilde{L}$ meets $\multB (\C^{*})$ only at the identity
and $\tilde{L}\subset \tilde{\G}(H,L)$. Since $L_E^{\perp}$ contains $L$, we have
$$ \tilde{\G}(H,L)=\kappa^{-1}(L)\subset \kappa^{-1}\left(L_E^{\perp}\right)= \tilde{\G}\left(H,L^{\perp}\right)$$
and therefore  $\tilde{L}\subset \tilde{\G}\left(H,L^{\perp}\right).$
Recall that $E(L_E^{\perp},L)\subset \Z$.  So, if $\tilde{l}\in \tilde{L}, \phi \in \tilde{\G}(H,L_E^{\perp})$ then $E\left(\kappa(\tilde{l}),\kappa(\phi)\right)\in \Z$ and therefore $\tilde{l}$
and  $\phi$ commute (see the very end of Subsect. \ref{BH}). This implies that $\tilde{L}$ is a central subgroup of $\tilde{\G}\left(H,L_E^{\perp}\right)$. 
In order to check the discreteness of $\tilde{L}$, recall that $\kappa(\tilde{L})=L$ is a discrete subgroup in $L_E^{\perp}$. Hence, if $\tilde{L}$ is {\sl not} discrete,
the intersection $\tilde{L}\bigcap \ker(\kappa)$ is infinite. However, $\ker(\kappa)=\multB (\C^{*})$ and we know that  $\tilde{L}$ meets $\multB (\C^{*})$ only at a single element.
The obtained contradiction proves that $\tilde{L}$ is discrete. This proves (i). Since $L_0=L\bigcap \ker(H)$, (ii) follows from (i) combined with \eqref{GHLPi} applied to $\Pi=\ker(H)$.
Now (iii) follows from (ii) combined with Example \ref{kerHnice}.
\end{proof}

\begin{rem}
The same arguments prove that  $\tilde{\G}(H,L)$
is a central subgroup of $\tilde{\G}\left(H,L_E^{\perp}\right)$. In fact, 
the natural ``multiplication map'' 
$$\multB(\C^{*})\times \tilde{L}(H,\alpha)\to \tilde{\G}(H,L)$$
is a group isomorphism.
\end{rem}

\begin{sect}
\label{thetaG}
Applying  short exact sequence \eqref{GHPi} to $\Pi=L_E^{\perp}$, we get a short exact sequence of complex Lie groups
\begin{equation}
\label{GHL1}
1 \to \C^{*} \overset{\multB}{\longrightarrow}  \tilde{\G}\left(H,L_E^{\perp}\right) \overset{\kappa}{\to} L_E^{\perp} \to 0
\end{equation}
where the image $\multB(\C^{*})$ is a central subgroup of  $\tilde{\G}\left(H,L_E^{\perp}\right)$. Each $u \in L_E^{\perp}$ lifts to
$\BB_{H,u}\in \tilde{\G}\left(H,L_E^{\perp}\right)$ and the {\sl commutator pairing}
$$L_E^{\perp} \times L_E^{\perp}  \to \multB(\C^{*})$$
attached to \eqref{GHL1}
 coincides with
$$u_1, u_2 \mapsto \multB\left(\eb^{2\pi \ib E(u_1,u_2)}\right) \ \forall u_1,u_2 \in L_E^{\perp}.$$
Recall that 
$$\tilde{L}\subset\tilde{\G}\left(H,L_E^{\perp}\right) \subset \tilde{\G}(H,V)\subset \Hol_0(V_{\LB}),$$
 is a central discrete subgroup $\tilde{L}$ of $\tilde{\G}\left(H,L_E^{\perp}\right)$ that acts discretely on  $V_{\LB}$
 and 
 $$\LL(H,\alpha)=V_{\LB}/\tilde{L}=V_{\LB}/\tilde{L}(H,\alpha).$$
This gives us the natural embedding of   the complex Lie quotient group
$$\G(H,\alpha):=\tilde{\G}\left(H,L_E^{\perp}\right)/\tilde{L}=\tilde{\G}\left(H,L_E^{\perp}\right)/\tilde{L}(H,\alpha)$$ into the group $\Hol(\LL(H,\alpha))$ of holomorphic automorphisms of 
the total space of $\LL(H,\alpha)$.
Further, we will identify $\G(H,\alpha)=\tilde{\G}\left(H,L_E^{\perp}\right)/\tilde{L}$ with its (isomorphic) image in $\Hol(\LL(H,\alpha))$. 
It follows from Lemma \ref{identity}(iii) that the action map
$$\G(H,\alpha)\times \LL(H,\alpha) \to \LL(H,\alpha)$$
is holomorphic.  
\end{sect}

\begin{sect}
\label{thetaD}
It follows from \eqref{GHL1} and \eqref{KLalpha} that
$\G(H,\alpha)=\tilde{\G}\left(H,L_E^{\perp}\right)/\tilde{L}$ is included in a short exact sequence of complex Lie groups
\begin{equation}
\label{GHL2}
1 \to \C^{*} \to \G(H,\alpha)\overset{\bar{\kappa}}{\to} L_E^{\perp}/L  \ (=K(\LL(H,\alpha)\ )\to 0.
\end{equation}
Here the image of $\C^{*}$ is a central subgroup in $\G(H,\alpha)$, each 
$$\lambda \in \C^{*}\to \G(H,\alpha) \subset \Hol(\LL(H,\alpha))$$
acts on the total space of $\LL(H,\alpha)$ as the multiplication by $\lambda$ at every fiber of $\LL(H,\alpha) \to X$,
i.e, $\lambda$ is mapped to $\mult_{\LL(H,\alpha)}(\lambda)$;
 the surjective complex Lie group homomorphism 
$$\bar{\kappa}:\G(H,\alpha)=\tilde{\G}\left(H,L_E^{\perp}\right)/\tilde{L}\to  L_E^{\perp}/L$$
 kills  the image of $\C^{*}$ and sends a coset
$\BB_{H,u} \tilde{L}$ to $u+L$ for each $u \in L^{\perp}$.

 Clearly, the  {\sl commutator pairing} 
attached to central extension \eqref{GHL2} is
\begin{equation}
\label{thetaCOMM}
\mathbf{e}_{H,\alpha}: L^{\perp}/L  \times L^{\perp}/L  \to \C^{*},  \
u_1+L, u_2+L \mapsto \eb^{2\pi\ib E(u_1,u_2)} \ \forall
u_1+L, u_2+L \in  L^{\perp}/L.
\end{equation}

\begin{rem}
\label{restrict}
\begin{itemize}
\item[(i)]
Clearly, the restriction of $\mathbf{e}_{H,\alpha}$ to $L_{1,E}^{\perp}/L_1
\times L_{1,E}^{\perp}/L_1$ coincides with the {\sl nondegenerate} pairing 
\eqref{pairingL1}.
\item[(ii)]
Clearly, $\ker(H)/L_0$ lies in the kernel of $\mathbf{e}_{H,\alpha}$. Combining this with (i), we obtain that $\ker(H)/L_0$ coincides with the kernel of $\mathbf{e}_{H,\alpha}$, since
 $$L_E^{\perp}/L=(\ker(H)/L_0)\oplus (L_{1,E}^{\perp}/L_1).$$

\end{itemize}
\end{rem}
\end{sect}

\begin{thm}
\label{GHalpha0}
The identity component $\G(H,\alpha)^{0}$ of $\G(H,\alpha)$ coincides 
with the preimage $\kappa^{-1}(\ker(H)/L_0)$ of 
$$\ker(H)/L_0\subset  L_E^{\perp}/L\subset V/L=X$$ and is canonically isomorphic as a complex Lie group
to the quotient $\tilde{\G}(H,\ker(H))/\tilde{L}_0$. In particular,  $\G(H,\alpha)^{0}$  is a central
subgroup of $\G(H,\alpha)$ that is isomorphic as a complex Lie group to $\left(\ker(H)\times\C^{*}\right)/\bar{L}_0$.
\end{thm}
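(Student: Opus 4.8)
The plan is to read off everything from the central extension \eqref{GHL2}, whose kernel $\C^{*}$ is \emph{connected}. First I would record the elementary fact that in any short exact sequence of Lie groups $1\to A\to G\overset{p}{\to}C\to 1$ with $A$ connected one has $G^{0}=p^{-1}(C^{0})$: indeed $p(G^{0})\subseteq C^{0}$ gives one inclusion, while $p^{-1}(C^{0})$ is an extension of the connected group $C^{0}$ by the connected group $A$, hence is itself connected and so lies in $G^{0}$. Applying this to \eqref{GHL2} with $A=\C^{*}$, $G=\G(H,\alpha)$, $C=L_E^{\perp}/L=K(\LL(H,\alpha))$, and invoking Remark \ref{pi0K}, which identifies the identity component $K(\LL(H,\alpha))^{0}$ with $\ker(H)/L_0$, I get at once
$$\G(H,\alpha)^{0}=\bar{\kappa}^{-1}\!\left(\ker(H)/L_0\right),$$
which is the first assertion.

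Next I would produce the canonical isomorphism with $\tilde{\G}(H,\ker(H))/\tilde{L}_0$. Write $\pi:\tilde{\G}(H,L_E^{\perp})\to \G(H,\alpha)=\tilde{\G}(H,L_E^{\perp})/\tilde{L}$ for the quotient map; since $\tilde{L}$ is discrete and central (Theorem \ref{quotient}(i)), $\pi$ is a holomorphic covering homomorphism. By Lemma \ref{identity}(iii) the identity component of $\tilde{\G}(H,L_E^{\perp})$ is $\tilde{\G}(H,\ker(H))$, and because $\pi$ is open and sends this connected open subgroup onto a connected open subgroup, $\G(H,\alpha)^{0}=\pi\!\left(\tilde{\G}(H,\ker(H))\right)$. (Equivalently, one checks directly that $\bar{\kappa}^{-1}(\ker(H)/L_0)=\pi(\kappa^{-1}(\ker(H)+L))$ and that $\pi(\kappa^{-1}(\ker(H)+L))=\pi(\kappa^{-1}(\ker(H)))$, using $\kappa(\AA_{H,l})=l$ together with $\AA_{H,l}\in\tilde{L}=\ker(\pi)$ to absorb the lattice translations.) The kernel of the restriction $\pi|_{\tilde{\G}(H,\ker(H))}$ is $\tilde{\G}(H,\ker(H))\cap\tilde{L}=\tilde{L}_0$ by Theorem \ref{quotient}(ii), so $\pi$ induces a bijective homomorphism $\tilde{\G}(H,\ker(H))/\tilde{L}_0\to\G(H,\alpha)^{0}$; as $\pi$ is a local biholomorphism this bijection is an isomorphism of complex Lie groups.

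Finally, for the ``in particular'' clause I would note that $\tilde{\G}(H,\ker(H))$ is a central subgroup of $\tilde{\G}(H,V)\supseteq\tilde{\G}(H,L_E^{\perp})$ (the last line of Subsection \ref{BH}), so its image $\G(H,\alpha)^{0}=\pi(\tilde{\G}(H,\ker(H)))$ under the surjection $\pi$ is central in $\G(H,\alpha)$; alternatively centrality follows from Remark \ref{restrict}(ii), since $\bar{\kappa}(\G(H,\alpha)^{0})=\ker(H)/L_0$ is exactly the radical of the commutator pairing $\mathbf{e}_{H,\alpha}$. The explicit description as $\left(\ker(H)\times\C^{*}\right)/\bar{L}_0$ is then just the isomorphism of Theorem \ref{quotient}(iii) transported through the previous paragraph. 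I expect the only genuinely delicate point to be the bookkeeping that these maps are isomorphisms of \emph{complex} Lie groups rather than merely of abstract groups; this is handled by the fact that $\pi$ is a holomorphic covering together with the complex-analytic structure furnished by Lemma \ref{real0complex} and Remark \ref{realcomplexLie1}, so no genuinely new estimate is needed.
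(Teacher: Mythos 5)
Your proposal is correct and follows essentially the same route as the paper: both identify $\G(H,\alpha)^{0}$ with the image of the identity component $\tilde{\G}(H,\ker(H))$ under the quotient by the discrete central subgroup $\tilde{L}$, compute the kernel of the restriction as $\tilde{L}_0$ via Theorem \ref{quotient}(ii), and deduce the last clause from Theorem \ref{quotient}(iii). The only cosmetic difference is that you obtain the equality $\G(H,\alpha)^{0}=\bar{\kappa}^{-1}(\ker(H)/L_0)$ from the general fact that an extension with connected kernel satisfies $G^{0}=p^{-1}(C^{0})$, whereas the paper verifies the nontrivial inclusion by exhibiting the explicit lifts $\BB_{H,u}\tilde{L}_0$ of elements $u+L_0\in\ker(H)/L_0$; both arguments are sound.
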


\begin{proof}
It follows from Theorem \ref{quotient} that 
$\G(H,\alpha)^{0}$ is the image of $\tilde{\G}(H,\ker(H))$ in $\tilde{\G}\left(H,L_E^{\perp}\right)/\tilde{L}=\G(H,\alpha)$
and this image coincides with
$$\tilde{\G}(H,\ker(H))/\left(\tilde{L}\bigcap \tilde{\G}(H,\ker(H))\right)=\tilde{\G}(H,\ker(H))/\tilde{L}_0\subset \tilde{\G}\left(H,L_E^{\perp}\right)/\tilde{L}=\G(H,\alpha).$$
Since $\ker(H)/L_0$ is the identity component of $L_E^{\perp}/L$,  \eqref{GHL2} implies that
$\G(H,\alpha)^{0}\subset \kappa^{-1}(\ker(H)/L_0)$.
The connectedness of $\C^{*}$ implies that its image in $\G(H,\alpha)$ (see \eqref{GHL2}) lies in $\G(H,\alpha)^{0}$.
The exactness of \eqref{GHL2} implies that in order to prove the desired equality, it suffices to check that for each $u+L_0\in \ker(H)/L_0$ (with $u\in \ker(H)$), there is
$\uu\in \G(H,\alpha)^{0}$ with $\bar{\kappa}(\uu)=u+L_0$. Thanks to \eqref{GHPi}  and \eqref{GHL1},
$$\uu:=\BB_{H,u}\tilde{L}_0\in \tilde{\G}(H,\ker(H))/\tilde{L}_0=\G(H,\alpha)^{0}$$
does the trick. Now the last assertion of Theorem follows from Theorem \ref{quotient}(iii).
\end{proof}

\begin{thm}
\label{abelianSUB}
Let $\tilde{B}$ be a subgroup of $\G(H,\alpha)$ and 
$$B:=\bar{\kappa}(\tilde{B})\subset  L^{\perp}/L$$
be its image, which is a subgroup in $L^{\perp}/L$.

\begin{itemize}
\item[(0)]
If $N$ is a subgroup of $B$ then 
$$\tilde{N}:=\bar{\kappa}^{-1}(N)\bigcap \tilde{B}=
\{u \in \tilde{B}\subset \G(H,\alpha)\mid \bar{\kappa}(u)\in B\}$$
is a normal subgroup in $\tilde{B}$.
In addition, if  $\tilde{B}$ is finite then 
 $[B:N]$ and $[\tilde{B}:\tilde{N}]$ coincide.

\item[(i)]
$\tilde{B}$ is commutative if and only if $B$ is isotropic with respect to $\mathbf{e}_{H,\alpha}$.
\item[(ii)]
Suppose that $H \ne 0$ and 
$$B\subset L_{1,E}^{\perp}/L_1\subset L^{\perp}/L.$$
Then
$\tilde{B}$ is commutative if and only if $B$ is isotropic with respect to $\mathbf{e}_E$.
If this is the case then  the index
$\left[\left(L_{1,E}^{\perp}/L_1\right):B\right]$ is divisible by 
$\sqrt{\#\left(L_{1,E}^{\perp}/L_1\right)}$. 
 \item[iii)]
Suppose that $H \ne 0$ and $n$ is a positive integer such that
$$E(L,L)\subset n\Z.$$
Let $\tilde{A}$ be a commutative subgroup  of $\G(H,\alpha)$ and let
$$A:=\bar{\kappa}(\tilde{A})\subset  L^{\perp}/L$$
be its image, which is a subgroup in $L^{\perp}/L$.
If $A\subset  L_{1,E}^{\perp}/L_1$ then the index $\left[\left(L_{1,E}^{\perp}/L_1\right):B\right]$ is divisible by $n$.
\item[iv)]
Suppose that $H \ne 0$ and
\begin{equation}
\label{projection}
\mathrm{pr}_2: L^{\perp}/L=(\ker(H)/L_0)\oplus (L_{1,E}^{\perp}/L) \to (L_{1,E}^{\perp}/L_1)
\end{equation}
be the projection map. Let us put
$$B_1:=\mathrm{pr}_2(B)=\mathrm{pr}_2\bar{\kappa}(\tilde{B})\subset L_{1,E}^{\perp}/L_1.$$
Then $\tilde{B}$ is commutative if and only if $B_1$ is isotropic with respect to $\mathbf{e}_E$.
If this is the case then the index $[(L_{1,E}^{\perp}/L_1):B]$ is divisible by 
$\sqrt{\#(L_{1,E}^{\perp}/L_1)}$. 
\end{itemize}
\end{thm}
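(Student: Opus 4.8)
The plan is to prove the four assertions in sequence, since each later part builds on the commutator-pairing machinery of Subsection \ref{groupCommutator} applied to the central extension \eqref{GHL2}, whose commutator pairing is the explicit form $\mathbf{e}_{H,\alpha}$ given in \eqref{thetaCOMM}. For part (0), the normality of $\tilde{N}=\bar\kappa^{-1}(N)\cap\tilde{B}$ follows because $N$ is normal in the commutative group $B$ (every subgroup of an abelian group is normal) and $\bar\kappa|_{\tilde{B}}:\tilde{B}\to B$ is a surjective homomorphism onto $B$, so preimages of normal subgroups are normal. The index count $[B:N]=[\tilde{B}:\tilde{N}]$ when $\tilde{B}$ is finite is immediate from the fact that $\bar\kappa$ induces a bijection $\tilde{B}/\tilde{N}\cong B/N$: indeed $\tilde{N}=\ker(\tilde{B}\to B/N)$, so $\tilde{B}/\tilde{N}$ embeds into $B/N$ and surjects onto it.

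For part (i), I would invoke directly the last sentence of Subsection \ref{groupCommutator}: a subgroup $\mathbf{K}$ of the total group is commutative if and only if its image under the quotient map is isotropic with respect to the commutator pairing. Here $\tilde{B}\subset\G(H,\alpha)$ has image $B=\bar\kappa(\tilde{B})$, and the commutator pairing attached to \eqref{GHL2} is exactly $\mathbf{e}_{H,\alpha}$, so $\tilde{B}$ is commutative iff $B$ is $\mathbf{e}_{H,\alpha}$-isotropic. Part (ii) then refines this: when $B\subset L_{1,E}^{\perp}/L_1$, Remark \ref{restrict}(i) identifies the restriction of $\mathbf{e}_{H,\alpha}$ to $L_{1,E}^{\perp}/L_1$ with the pairing $\mathbf{e}_E$ of \eqref{pairingL1}, so isotropy with respect to $\mathbf{e}_{H,\alpha}$ coincides with isotropy with respect to $\mathbf{e}_E$. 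The divisibility claim in (ii) is the one genuinely arithmetic point: for a maximal isotropic (Lagrangian) subspace $B$ of a nondegenerate alternating pairing on a finite abelian group of order $\#(L_{1,E}^{\perp}/L_1)$, one has $\#B\le\sqrt{\#(L_{1,E}^{\perp}/L_1)}$, hence $[(L_{1,E}^{\perp}/L_1):B]\ge\sqrt{\#(L_{1,E}^{\perp}/L_1)}$; the sharper \emph{divisibility} (rather than mere inequality) follows by passing to the orthogonal complement $B^{\perp}\supseteq B$, since nondegeneracy gives $\#B\cdot\#B^{\perp}=\#(L_{1,E}^{\perp}/L_1)$ and isotropy gives $B\subseteq B^{\perp}$, so $\#B$ divides $\#(L_{1,E}^{\perp}/L_1)/\#B=\#B^{\perp}$, whence $\sqrt{\#(L_{1,E}^{\perp}/L_1)}$ divides $\#B^{\perp}$, which in turn divides the index. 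I expect this orthogonal-complement argument over a finite abelian group (using the decomposition \eqref{splitU} into hyperbolic pieces $(\Z/d_i\Z)^2$) to be the main technical obstacle.

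For part (iii), the hypothesis $E(L,L)\subset n\Z$ combined with Lemma \ref{divn} gives that $\#(L_{1,E}^{\perp}/L_1)$ is divisible by $n^2$; since $\tilde{A}$ is commutative with image $A\subset L_{1,E}^{\perp}/L_1$, part (ii) yields that the index $[(L_{1,E}^{\perp}/L_1):A]$ is divisible by $\sqrt{\#(L_{1,E}^{\perp}/L_1)}$, and because $n^2\mid\#(L_{1,E}^{\perp}/L_1)$ we get $n\mid\sqrt{\#(L_{1,E}^{\perp}/L_1)}$, so $n$ divides the index. (I note the statement writes $B$ where $A$ is presumably intended; I would state the conclusion for $A=\bar\kappa(\tilde{A})$ to match the hypotheses.)

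For part (iv), I would reduce to the previous parts via the projection $\mathrm{pr}_2$. The key observation is that the kernel $\ker(H)/L_0$ of $\mathbf{e}_{H,\alpha}$ (by Remark \ref{restrict}(ii)) is exactly the part killed by the pairing, so for $u_1,u_2\in L^{\perp}$ one has $\mathbf{e}_{H,\alpha}(u_1+L,u_2+L)=\mathbf{e}_E(\mathrm{pr}_2(u_1+L),\mathrm{pr}_2(u_2+L))$; consequently $B$ is $\mathbf{e}_{H,\alpha}$-isotropic if and only if $B_1=\mathrm{pr}_2(B)$ is $\mathbf{e}_E$-isotropic, which by part (i) is equivalent to commutativity of $\tilde{B}$. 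For the index divisibility I would apply part (ii) to $B_1$, noting that $[(L_{1,E}^{\perp}/L_1):B]$ relates to $[(L_{1,E}^{\perp}/L_1):B_1]$; since $\mathrm{pr}_2$ is the projection onto the $L_{1,E}^{\perp}/L_1$ summand, the image $\mathrm{pr}_2(B)=B_1$ satisfies $[(L_{1,E}^{\perp}/L_1):B_1]\mid[(L_{1,E}^{\perp}/L_1):B]$ up to the contribution of the $\ker(H)/L_0$-component, and the isotropic bound from part (ii) applied to $B_1$ gives that $\sqrt{\#(L_{1,E}^{\perp}/L_1)}$ divides $[(L_{1,E}^{\perp}/L_1):B_1]$, hence the index $[(L_{1,E}^{\perp}/L_1):B]$. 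The care needed in tracking how the index over the full $L^{\perp}/L$ versus over $L_{1,E}^{\perp}/L_1$ behaves under $\mathrm{pr}_2$ is the subtle bookkeeping point of this last part.
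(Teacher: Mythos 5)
Your proposal is correct and follows essentially the same route as the paper: the commutator-pairing criterion from Subsection \ref{groupCommutator} applied to \eqref{GHL2} for (i), Remark \ref{restrict} to pass to $\mathbf{e}_E$ in (ii) and to kill the $\ker(H)/L_0$-components in (iv), Lemma \ref{divn} for (iii), and the surjection $\bar{\kappa}|_{\tilde{B}}$ for the index count in (0). The only difference is that you spell out the orthogonal-complement counting ($\#B\cdot\#B^{\perp}=\#(L_{1,E}^{\perp}/L_1)$ together with $B\subseteq B^{\perp}$, hence $\#(L_{1,E}^{\perp}/L_1)$ divides $(\#B^{\perp})^2$) behind the divisibility claim, which the paper merely asserts to follow from nondegeneracy, and you correctly flag that the $B$ appearing in the conclusions of (iii) and (iv) should be read as $A$ and $B_1$ respectively.
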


\begin{proof}
(0). Since $B\subset L_{1,E}^{\perp}/L_1$ is commutative, its every subgroup, including $N$, is normal in $B$.
Let us consider the {\sl surjective} homomorphism 
\begin{equation}
\label{BC}
\bar{\kappa}:\tilde{B} \to B
\end{equation}
 and denote its kernel by $\tilde{B}_0$, which is a normal subgroup in $\tilde{B}$. The surjectivity of \eqref{BC} implies that
the preimage $\tilde{N}\subset \tilde{B}$ of $N$ is also normal in $\tilde{B}$; in addition, $\tilde{N}$ contains $\tilde{B}_0$, which is normal in $\tilde{N}$. The surjection \eqref{BC} induces  group isomorphisms 
$$\tilde{B}/\tilde{B}_0\cong B, \ \tilde{N}/\tilde{B}_0\cong N.$$
If $\tilde{B}$ is finite then all the other groups involved are also finite and
$$\#(\tilde{B})=\#(\tilde{B}_0)\cdot\#(B), \ \#(\tilde{N})=\#(\tilde{B}_0)\cdot\#(N),$$
which implies that
$$[\tilde{B}:\tilde{N}]=\frac{\#(\tilde{B})}{\#(\tilde{N})}=\frac{\#(\tilde{B}_0)\cdot\#(B)}{\#(\tilde{B}_0)\cdot\#(N)}=\frac{\#(B)}{\#(N)}=[B:N].$$
It follows that $[\tilde{B}:\tilde{N}]=[B:N]$. This completes the proof of (0).

(i) follows from the description  \eqref{thetaCOMM} of $\mathbf{e}_{H,\alpha}$ as the commutator pairing attached to the central extension \eqref{GHL2}.

The first assertion of (ii) follows from (i) and Remark \ref{restrict}. The second assertion of (ii) follows from the first one and the {\sl nondegeneracy} of  $\mathbf{e}_E$.

(iii) follows from (ii) combined with Lemma \ref{divn}.

(iv) By (i), $\tilde{B}$ is commutative if and only if $B$ is isotropic with respect $\mathbf{e}_{H,\alpha}$.
Let 
$$x_1, x_2 \in  B\subset L^{\perp}/L=\ker(H)/L_0\oplus L_{1,E}^{\perp}/L_1\subset X.$$
We have
$$x_1=h_1+l_1, x_2=h_2+l_2; \ h_j\in \ker(H)/L_0, \ l_j\in L_{1,E}^{\perp}/L_1.$$
By Remark \ref{restrict}, each $h_j$ lies in the kernel of 
$\mathbf{e}_{H,\alpha}$. This implies that
$$\mathbf{e}_{H,\alpha}(x_1,x_2)=\mathbf{e}_{H,\alpha}(l_1,l_2)=\mathbf{e}_E(l_1,l_2).$$
This implies that $B$ is isotropic with respect to $\mathbf{e}_{H,\alpha}$ if and only if 
$B_1$ is isotropic with respect to $\mathbf{e}_E$. The remaining assertion about the index follows from the {\sl nondegeneracy} of  $\mathbf{e}_E$.
\end{proof}

\begin{sect}
\label{defTheta}
We call   $\G(H,\alpha)$ the {\sl theta group} of $\LL(H,\alpha)$. Recall (Remark \ref{pi0K}) that
\begin{equation}
\label{KLperp}
K(\LL(H,\alpha))=L^{\perp}/L\subset V/L=X.
\end{equation}
Clearly, 
 $$\G(H,\alpha)\subset \mathbf{S}(\LL(H,\alpha)).$$ More precisely, all elements  of $\mult_{\LL(H,\alpha)}(\C^{*})$ are liftings of the identity automorphism 
$T_e$ of $X$ (where $e$ is the zero of group law on $X$)
while $\BB_{H,u} \tilde{L}$  is a lifting of  $T_{x}$ where
$$x=u +L \in  L_E^{\perp}/L =K(\LL(H,\alpha))\subset V/L=X.$$
It follows that 
$$\bar{\kappa}: \G(H,\alpha) \to  L_E^{\perp}/L =K(\LL(H,\alpha))\subset X$$ coincides with the restriction of $$\rho=\rho_{\LL( H,\alpha)}: \mathbf{S}(\LL( H,\alpha)) \to X$$ (defined in Subsection \ref{SK})
to $\G(H,\alpha)\subset \mathbf{S}(\LL(H,\alpha))$.
\end{sect}

\begin{thm}
\label{theta0}
The identity component $\G(H,\alpha)^{0}=\bar{\kappa}^{-1}(\ker(H)/L_0)$ of the complex Lie group $\G(H,\alpha)$ 
 is the center of $\G(H,\alpha)$, which is included in the short exact sequence of complex Lie groups
$$1 \to \C^{*}\to \G(H,\alpha)^{0}\overset{\bar{\kappa}}{\to} \ker(H)/L_0 \to 0.$$
In particular,  $\G(H,\alpha)$ is commutative if $H=0$.
\end{thm}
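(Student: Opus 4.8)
The plan is to prove Theorem \ref{theta0} in two parts: first establishing that $\G(H,\alpha)^{0}$ is the center, and then reading off the short exact sequence and the commutativity criterion as immediate consequences. Recall from Theorem \ref{GHalpha0} that $\G(H,\alpha)^{0}=\bar{\kappa}^{-1}(\ker(H)/L_0)$ and that this subgroup is already known to be \emph{central} in $\G(H,\alpha)$. So the only substantive point is the reverse containment: every element of the center of $\G(H,\alpha)$ must lie in $\G(H,\alpha)^{0}$.

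For the central-containment direction I would argue via the commutator pairing. By Theorem \ref{abelianSUB}(i), an element $\uu\in\G(H,\alpha)$ commutes with all of $\G(H,\alpha)$ if and only if its image $x=\bar{\kappa}(\uu)\in L_E^{\perp}/L$ pairs trivially under $\mathbf{e}_{H,\alpha}$ with every element of $L_E^{\perp}/L$; that is, $x$ lies in the \emph{kernel} of the commutator pairing $\mathbf{e}_{H,\alpha}$ described in \eqref{thetaCOMM}. But Remark \ref{restrict}(ii) identifies this kernel precisely as $\ker(H)/L_0$. Hence $\bar{\kappa}(\uu)\in\ker(H)/L_0$, which by Theorem \ref{GHalpha0} means exactly $\uu\in\bar{\kappa}^{-1}(\ker(H)/L_0)=\G(H,\alpha)^{0}$. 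Combined with the centrality already supplied by Theorem \ref{GHalpha0}, this proves $\G(H,\alpha)^{0}$ equals the center. The one case to handle separately is $H=0$: then $E=0$, so $\mathbf{e}_{H,\alpha}$ is trivial and every element is central, giving commutativity directly; this also matches the fact that $\ker(H)=V$ and $L_0=L$, so $\ker(H)/L_0=L_E^\perp/L$ is everything.

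The short exact sequence is then obtained simply by restricting the central extension \eqref{GHL2} to the preimage of the subgroup $\ker(H)/L_0\subset L_E^{\perp}/L$. Since $\bar{\kappa}$ is a surjective homomorphism of complex Lie groups with kernel the central $\C^{*}$, its restriction to $\bar{\kappa}^{-1}(\ker(H)/L_0)=\G(H,\alpha)^{0}$ is surjective onto $\ker(H)/L_0$ with the same kernel $\C^{*}$, yielding
\begin{equation*}
1 \to \C^{*}\to \G(H,\alpha)^{0}\overset{\bar{\kappa}}{\to} \ker(H)/L_0 \to 0
\end{equation*}
as complex Lie groups. The final commutativity claim for $H=0$ follows because then $\G(H,\alpha)=\G(H,\alpha)^{0}$ equals the whole group, which is its own center.

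I do not expect a serious obstacle here, since the bulk of the work has been done in Theorems \ref{GHalpha0} and \ref{abelianSUB} and in Remark \ref{restrict}. The only point requiring a little care is making sure the center is computed correctly: centrality of an element is equivalent to lying in the radical (kernel) of the \emph{full} commutator pairing on $L_E^{\perp}/L$, not merely the nondegenerate pairing $\mathbf{e}_E$ on the quotient $L_{1,E}^{\perp}/L_1$. The decomposition $L_E^{\perp}/L=(\ker(H)/L_0)\oplus(L_{1,E}^{\perp}/L_1)$ together with Remark \ref{restrict} makes this identification transparent, so the argument stays clean. I would therefore present it as a short deduction rather than an independent computation.
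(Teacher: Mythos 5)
Your proof is correct and follows essentially the same route as the paper: both reduce centrality of an element $\uu$ to the condition that $\bar{\kappa}(\uu)$ lies in the kernel of the commutator pairing $\mathbf{e}_{H,\alpha}$ of the central extension, and then identify that kernel with $\ker(H)/L_0$. The only difference is one of bookkeeping --- you delegate the identification of the kernel to Remark \ref{restrict}(ii) (i.e.\ to the nondegeneracy of $\mathbf{e}_E$) and the centrality of $\G(H,\alpha)^{0}$ to Theorem \ref{GHalpha0}, whereas the paper redoes the computation inline with the basis $e_j,f_j$ and the elementary divisors $d_j(E)$; both arguments rest on the same decomposition $L_E^{\perp}/L=(\ker(H)/L_0)\oplus\left(L_{1,E}^{\perp}/L_1\right)$.
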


\

\begin{proof}[Proof of Theorem \ref{theta0}]

It follows from the results of Subsection \ref{thetaD} that $\mathfrak{u} \in \G(H,\alpha)$  lies in the {\sl center} of $\G(H,\alpha)$ if and only if 
$$x:=\bar{\kappa}(\mathfrak{u}) \in L_E^{\perp}/L$$
satisfies
$$x=v+L \ \text{ with } v \in  L_E^{\perp}, \ \eb^{2\pi\ib E(v,w)}=1 \ \forall w \in L_E^{\perp},$$
i.e.,
\begin{equation}
\label{intCondition}
E(v,w)=\IM(H(v,w))\in \Z \ \forall w \in L_E^{\perp}.
\end{equation}
Clearly,  each $v \in \ker(H)$ satisfies \eqref{intCondition} and therefore the center of $\G(H,\alpha)$ contains
$$\bar{\kappa}^{-1}\left(\ker(H)/\left(\ker(H)\bigcap L\right)\right)=\bar{\kappa}^{-1}\left(\ker(H)/L_0\right)=\G(H,\alpha)^{0}.$$
In particular, if $H=0$ then 
$$\ker(H)=V, \  L_0=L, \  L_E^{\perp}=V,   \ \ker(H)/L_0=L_E^{\perp}/L;$$
hence  $\G(H,\alpha)$ coincides 
 with its central subgroup $\G(H,\alpha)^{0}$ and therefore is commutative.

Now suppose that   (in the notation of Subsection \ref{normalForm})
$$v \not\in \ker(H)\oplus\left(\oplus_{i=1}^{g-g_0} U_i\right).$$
This implies that $H \ne 0$ and  there exist $u_0 \in \ker(H)$ and $u_i \in \frac{1}{d_i}U_i$ (for all $i$ with $1 \le i \le g-g_0$) such that {\sl not all} $u_i \in U_i$ and
$$u=u_0+\sum_{i=1}^{g-g_0}u_i.$$
Suppose that $u_j \not\in U_j$ for  a certain $j \in \{1, \dots, g-g_0\}$. Then
$u_j=a_j e_j+b_j f_j$ where $a_j, b_j \in \frac{1}{d_j}\Z$ and, at least, one of $a_j, b_j$ is {\sl not an integer}. Recall that 
$$\frac{1}{d_j}e_j, \frac{1}{d_j}f_j\in L_E^{\perp}.$$
However,
$$E\left(u, \frac{1}{d_j}e_j\right)=b_j, \ E\left(u, \frac{1}{d_j}f_j\right)=a_j$$
and therefore \eqref{intCondition} does not hold. This implies that if $\mathfrak{u}$ is a {\sl central} element
of $\G(H,\alpha)$
then
$v \in \ker(H)\oplus \left(\oplus_{i=1}^{g-g_0} U_i\right)$, i.e., 
$$\bar{\kappa}(\mathfrak{u})\in \ker(H)/L_0,$$
which means that $\mathfrak{u} \in \G(H,\alpha)^{0}$.
This completes the proof.
\end{proof}

\begin{thm}
\label{thetaS}
If  $\LL=\LL(H,\alpha)$ then $\G(H,\alpha)= \mathbf{S}(\LL( H,\alpha))$.
In particular,
\begin{equation}
\label{rhoKappa}
\rho_{\LL(H,\alpha)}=\bar{\kappa}, K(\LL(H,\alpha))=\bar{\kappa}(\G(H,\alpha))=\rho_{\LL(H,\alpha)}(\mathbf{S}(\LL( H,\alpha))).
\end{equation}
\end{thm}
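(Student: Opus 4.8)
The plan is to prove the two inclusions $\G(H,\alpha)\subseteq \SB(\LL(H,\alpha))$ and $\SB(\LL(H,\alpha))\subseteq \G(H,\alpha)$, of which the first is already available and the second carries the real content. The mechanism for the second inclusion is that any two liftings of the same translation $T_x$ differ by an element of $\ker(\rho_{\LL(H,\alpha)})$, so it suffices to produce, for each $\uu\in\SB(\LL(H,\alpha))$, one lifting of $\rho_{\LL(H,\alpha)}(\uu)$ inside $\G(H,\alpha)$.

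First I would recall what is already established. By Subsection \ref{defTheta} we have $\G(H,\alpha)\subseteq \SB(\LL(H,\alpha))$, the restriction of $\rho_{\LL(H,\alpha)}$ to $\G(H,\alpha)$ coincides with $\bar{\kappa}$, and by \eqref{GHL2} together with \eqref{KLperp} the homomorphism $\bar{\kappa}$ maps $\G(H,\alpha)$ \emph{onto} $K(\LL(H,\alpha))=L_E^{\perp}/L$. Moreover, by Subsection \ref{thetaD} the central $\C^{*}\subseteq\G(H,\alpha)$ is precisely $\mult_{\LL(H,\alpha)}(\C^{*})$, while by the construction of $\rho_{\LL}$ in Subsection \ref{SK} one has $\ker(\rho_{\LL(H,\alpha)})=\mult_{\LL(H,\alpha)}(\C^{*})$. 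Thus the entire kernel of $\rho_{\LL(H,\alpha)}$ already sits inside $\G(H,\alpha)$.

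Next I would prove the key containment $\rho_{\LL(H,\alpha)}(\SB(\LL(H,\alpha)))\subseteq K(\LL(H,\alpha))$. Let $\uu\in\SB(\LL(H,\alpha))$ be a lifting of $T_x$ with $x=\rho_{\LL(H,\alpha)}(\uu)$. By property (ii) in the definition of $\SB(\LL)$, for each $z\in X$ the automorphism $\uu$ restricts to a linear isomorphism $\LL_z\overset{\sim}{\to}\LL_{z+x}=(T_x^{*}\LL)_z$; assembling these fiberwise isomorphisms over $\mathrm{id}_X$ yields an isomorphism of holomorphic line bundles $\LL\cong T_x^{*}\LL$. By the definition of $K(\LL)$ recalled in Subsection \ref{SK}, this means exactly $x\in K(\LL(H,\alpha))$.

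Finally I would combine these facts. Given $\uu\in\SB(\LL(H,\alpha))$, put $x=\rho_{\LL(H,\alpha)}(\uu)\in K(\LL(H,\alpha))$ by the previous paragraph. Since $\bar{\kappa}=\rho_{\LL(H,\alpha)}|_{\G(H,\alpha)}$ maps $\G(H,\alpha)$ onto $K(\LL(H,\alpha))$, there is $\ww\in\G(H,\alpha)$ with $\rho_{\LL(H,\alpha)}(\ww)=x$. Then $\rho_{\LL(H,\alpha)}(\uu\ww^{-1})=0$, so $\uu\ww^{-1}\in\ker(\rho_{\LL(H,\alpha)})=\mult_{\LL(H,\alpha)}(\C^{*})\subseteq\G(H,\alpha)$, whence $\uu=(\uu\ww^{-1})\ww\in\G(H,\alpha)$. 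This yields $\SB(\LL(H,\alpha))\subseteq\G(H,\alpha)$ and hence the equality $\G(H,\alpha)=\SB(\LL(H,\alpha))$. The identities \eqref{rhoKappa} then follow at once: $\rho_{\LL(H,\alpha)}=\bar{\kappa}$ because the two groups coincide and their maps already agree on $\G(H,\alpha)$, and $K(\LL(H,\alpha))=\bar{\kappa}(\G(H,\alpha))=\rho_{\LL(H,\alpha)}(\SB(\LL(H,\alpha)))$. The main obstacle is the fiberwise argument of the third paragraph, where one passes from the mere existence of a fiberwise-linear lifting of $T_x$ to the genuine bundle isomorphism $\LL\cong T_x^{*}\LL$, i.e. to membership in $K(\LL)$; everything else is bookkeeping with the two central extensions and the shared central $\C^{*}$.
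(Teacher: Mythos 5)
Your proposal is correct and follows essentially the same route as the paper: the core step in both is showing $x=\rho_{\LL(H,\alpha)}(\uu)\in K(\LL(H,\alpha))$ by assembling the fiberwise linear maps into a bundle isomorphism $\LL\cong T_x^{*}\LL$, and then dividing by a lift of $x$ lying in $\G(H,\alpha)$. The only cosmetic difference is that the paper uses the explicit lift $\BB_{H,u}$ and re-runs the compactness-and-connectedness argument to see that $\uu\BB_{H,u}^{-1}$ is a constant scalar, whereas you take an abstract lift $\ww$ from the surjectivity of $\bar{\kappa}$ and quote the identification $\ker(\rho_{\LL})=\mult_{\LL}(\C^{*})$ already asserted in Subsection \ref{SK}, which rests on that same compactness argument.
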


\begin{rem}
\label{pi3K}
Recall (Remark \ref{pi0K}) that $\ker(H)/L_0$ is the identity component of  $K(\LL(H,\alpha))$.
Now results of Subsection  \ref{defTheta}
combined with Theorems \ref{thetaS}  imply that
\begin{equation}
\label{centerST}
\G(H,\alpha)^{0}=\bar{\kappa}^{-1}(\ker(H)/L_0)=\rho^{-1}\left(K(\LL(H,\alpha))^{0}\right)=\mathbf{S}(\LL(H,\alpha))^{0}.
\end{equation}
\end{rem}

\begin{proof}[of Theorem \ref{thetaS}]
We write  $p:\LL \to X$ for the structure morphism from the total space of the holomorphic line bundle to its base.

Let $\mathfrak{u} \in  \mathbf{S}(\LL)\subset \Hol(\LL)$. By definition of $\mathbf{S}(\LL)$, there is $x \in X$ such that 
$\mathfrak{u}: \LL \to \LL$ is a lifting of $T_x: X\to X$. In particular, 
the restriction of $\mathfrak{u}$ to  the fibers of  $\LL$ induces  the linear isomorphisms 
$$\mathfrak{u}_z: \LL_z \cong  \LL_{z+x}$$
between the fibers of $\LL$ at $z$ and $x+z$ for all $z\in X$.

 It follows from  \cite[Ch. 1, Sect. 2,  proposition 2.14]{Wells}
(applied to $f=T_x$)
that there exist an {\sl induced} holomorphic line bundle  
$$T_x^{*}\LL=\{(\mathfrak{l},z)\in \LL \times X\mid  p(\mathfrak{l})=z+x\}$$ over  $X$
with the structure morphism
$$T_x^{*}\LL \to X, \ (\mathfrak{l},z) \mapsto z,$$
  and a  holomorphic map  of total spaces of holomorphic line bundles over $X$
$$(T_x)_{*}: T_x^{*}\LL \to \LL, \  (\mathfrak{l},z) \mapsto \mathfrak{l}$$ 
that lifts $T_x$ and induces $\C$-linear isomorphisms between the corresponding fibers
$(T_x^{*}\LL)_z$ and $\LL_{z+x}$ for all $z\in X$. Clearly, $(T_x)_{*}$ is a biholomorphic isomorphism: indeed, 
its inverse is defined by 
$$\mathfrak{l} \mapsto  (\mathfrak{l},z)= (\mathfrak{l}, p(\mathfrak{l})-x).$$
It follows that the composition
$$(T_x)_{*}\circ\mathfrak{u}^{-1}: \LL \to \LL \to T_x^{*}\LL$$
is an isomorphism of holomorphic line bundles $\LL$ and $T_x^{*}\LL$ over $X$.  
Therefore holomorphic line bundles $\LL( H,\alpha)=\LL$ and  $T_x^{*}\LL( H,\alpha)=T_x^{*}\LL$ over $X$ are isomorphic.
 Hence
 $$x \in K(\LL( H,\alpha))= L_E^{\perp}/L \subset V/L=X.$$
 Pick $u \in L_E^{\perp}$ with $u+L=x$. Then $\mathfrak{u} \BB_{H,u}^{-1}$ is a holomorphic automorphism of  $\LL( H,\alpha)$ that leaves invariant every fiber $\LL(H,\alpha)_z$ and acts on it as a $\C$-linear automorphism. By compactness and connectedness of $X$, there is a nonzero scalar $\lambda\in \C^{*}$ such that $\mathfrak{u} \BB_{H,u}^{-1}$ acts as multiplication by $\lambda$ on every fiber. It follows that $\mathfrak{u} \BB_{H,u}^{-1}$ lies in $\G(H,\alpha)$. Since $\BB_{H,u}$ lies in $\G(H,\alpha)$ as well, we conclude that $\mathfrak{u} \in \G(H,\alpha)$.
\end{proof}

\begin{rem}
\label{ShortST}
It follows from Theorem \ref{thetaS} combined with \eqref{GHL2} and the results of Subsection \ref{defTheta} that
$\mathbf{S}(\LL( H,\alpha))=\G(H,\alpha)$ is included in a short exact sequence of complex Lie groups
\begin{equation}
\label{SrhoTheta}
1 \to \C^{*} \to \mathbf{S}(\LL( H,\alpha)) (=\G(H,\alpha)) \overset{\rho=\bar{\kappa}}{\longrightarrow} K(\LL(H,\alpha))\to 0.
\end{equation}
\end{rem}

\section{Proofs of Theorem \ref{pi0} and \ref{embedP1}}
\label{proofpi0}
\begin{defn}
\label{defSBLie}
Let $\LL$ be a holomorphic line bundle on $X$. Let us choose an isomorphism
of holomorphic line bundles $\psi:\LL \cong \LL(H,\alpha)$ for suitable A.H. date $(H,\alpha)$ where $(H,\alpha)$ is uniquely determined by the isomorphism class of $\LL$.
By Remark \ref{invariance} combined with Theorem \ref{thetaS},  there is a certain group isomorphism
$\psi_{\SB}:\SB(\LL)\cong \SB(\LL(H,\alpha)=\G(H,\alpha)$ that does {\sl not} depend on a choice of $\psi$.
Then there is the canonical structure of a complex Lie group on $\SB(\LL)$ such that the group isomorphism $\psi_{\SB}:\SB(\LL)\cong  \G(H,\alpha)$
is an isomorphism of complex Lie groups.
\end{defn}

\begin{cor}
\label{SBholAction}
The action map $\SB(\LL)\times \LL \to \LL$ and the group homomorphism $\rho_{\LL}: \SB(\LL)\to X$ are holomorphic.
\end{cor}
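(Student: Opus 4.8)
The plan is to deduce Corollary \ref{SBholAction} directly from the identification of $\SB(\LL)$ with the explicit theta group $\G(H,\alpha)$ that was established in Theorem \ref{thetaS}, together with the complex-analytic structure placed on $\SB(\LL)$ in Definition \ref{defSBLie}. First I would fix an isomorphism $\psi:\LL \cong \LL(H,\alpha)$ of holomorphic line bundles, as furnished by the Appel--Humbert classification. By Definition \ref{defSBLie}, the induced map $\psi_{\SB}:\SB(\LL)\cong \G(H,\alpha)$ is an isomorphism of complex Lie groups, and moreover the complex Lie structure on $\SB(\LL)$ is \emph{defined} to be the one transported from $\G(H,\alpha)$ through $\psi_{\SB}$. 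So every holomorphicity assertion about $\SB(\LL)$ reduces to the corresponding assertion for $\G(H,\alpha)$.

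For the action map, the key point is that $\psi$ is a biholomorphism of the total spaces $\LL$ and $\LL(H,\alpha)$, and it intertwines the two actions: for $\uu\in\SB(\LL)$ and $\mathfrak{l}\in\LL$ we have $\psi\bigl(\uu(\mathfrak{l})\bigr)=\psi_{\SB}(\uu)\bigl(\psi(\mathfrak{l})\bigr)$, by the very definition \eqref{isoSB} of $\psi_{\SB}$. Thus the action map $\SB(\LL)\times\LL\to\LL$ is conjugate, via the biholomorphisms $\psi_{\SB}\times\psi$ and $\psi$, to the action map $\G(H,\alpha)\times\LL(H,\alpha)\to\LL(H,\alpha)$. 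The latter was already shown to be holomorphic at the end of Subsection \ref{thetaG}, where it is deduced from Lemma \ref{identity}(iv) that the action $\tilde{\G}(H,L_E^{\perp})\times V_{\LB}\to V_{\LB}$ is holomorphic and descends to the quotient. Composing a holomorphic map with biholomorphisms on source and target preserves holomorphicity, so the action map for $\SB(\LL)$ is holomorphic.

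For the homomorphism $\rho_{\LL}$, I would use the commutative diagram \eqref{D} of Remark \ref{invariance}, which says that $\rho_{\LL}=\rho_{\LL(H,\alpha)}\circ\psi_{\SB}$. By Theorem \ref{thetaS}, namely the identity \eqref{rhoKappa}, the map $\rho_{\LL(H,\alpha)}$ coincides with $\bar{\kappa}:\G(H,\alpha)\to K(\LL(H,\alpha))\subset X$, which is a homomorphism of complex Lie groups by the exact sequence \eqref{GHL2} (itself a sequence of complex Lie groups). Since $\psi_{\SB}$ is an isomorphism of complex Lie groups and the inclusion $K(\LL(H,\alpha))\hookrightarrow X$ is holomorphic, the composite $\rho_{\LL}$ is a holomorphic homomorphism into $X$.

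I do not expect any genuine obstacle here: the corollary is essentially a transport-of-structure statement, and all the substantive analytic content has already been discharged in Lemma \ref{identity}(iv) and in the construction of the complex Lie structures in Section \ref{nonzeroH}. The only point requiring a little care is checking that $\psi$ intertwines the actions correctly, i.e. that conjugation by $\psi$ on the total space matches $\psi_{\SB}$ on the group; but this is immediate from the definition \eqref{isoSB} of $\psi_{\SB}$ as $\uu\mapsto\psi\uu\psi^{-1}$, and from the fact that $\psi_{\SB}$ is independent of the choice of $\psi$ so that both holomorphicity statements are well posed.
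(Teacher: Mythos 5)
Your proposal is correct and follows essentially the same route as the paper: the paper reduces to the case $\LL=\LL(H,\alpha)$, invokes Theorem \ref{thetaS} to identify $\SB(\LL)$ with $\G(H,\alpha)$, and cites the holomorphicity results of Subsection \ref{thetaG}. You merely spell out the transport-of-structure step (the intertwining via $\psi$ and the diagram \eqref{D}) that the paper leaves implicit in the phrase ``we may assume.''
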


\begin{proof}
We may assume that $\LL=\LL(H,\alpha)$ and therefore $\SB(\LL)=\G(H,\alpha)$. Then our assertion follows from the
results of Subsection \ref{thetaG} and Theorem \ref{thetaS}.
\end{proof}

\begin{proof}[Proof of Theorem \ref{pi0}]
(0)  and (i) are contained in Corollary \ref{SBholAction} and Theorem \ref{thetaS}. (iii) follows from the very Definition \ref{defSBLie}.
In order to check (ii), let us assume that $\LL=\LL(H,\alpha)$ and therefore 
$$\SB(\LL)=\G(H,\alpha),  \rho_{\LL}=\bar{\kappa}, \SB(\LL)^{0}=\G(H,\alpha)^{0}, K(\LL)^{0}=\ker(H)/L_0.$$
Then all the assertions of (ii) follow from Theorems \ref{GHalpha0} and Theorem \ref{theta0}.  
\end{proof}

\begin{proof}[Proof of Theorem \ref{embedP1}]
Denote by $\mathcal{V}$ the rank 2 vector bundle
$\mathcal{V}=\LL\oplus \mathbf{1}_X$ over $X$. By definition, 
 $Y_{\LL}$ is the projectivization of 
$\mathcal{V}$.

First, let us define an embedding
$$\SB(\LL) \hookrightarrow \Hol_0(\mathcal{V})=\Hol_0( \LL\oplus \mathbf{1}_X ).$$
In order to do that, recall that  each $\mathfrak{u} \in \SB(\LL) \subset \Hol_0(\LL)$ is a lifting of  
 of $T_x: X \to X$ where $x=\rho_{\LL}(\mathfrak{u})\in X$.  This allows us to define the action of 
$\mathfrak{u}$ on $\mathbf{1}_X=X \times \mathbb{C}$ as 
$$\bar{\kappa}_1(\mathfrak{u}): X \times \mathbb{C} \to X \times \mathbb{C}, (z,\lambda)\mapsto (z+\rho_{\LL}(\mathfrak{u}),\lambda) \ \forall z\in X, \lambda \in \C.$$
By Corollary \ref{SBholAction}, $\rho_{\LL}$ is a homomorphism of complex Lie groups, hence is holomorphic and therefore the corresponding  action map
$$\SB(\LL)\times \mathbf{1}_X \to \mathbf{1}_X,  \ \uu, (z,\lambda)\mapsto (z+\rho_{\LL}(\mathfrak{u}),\lambda) $$
is holomorphic.
 This gives us a (non-injective) group homomorphism
$$\bar{\kappa}_1: \SB(\LL)  \to \Hol_0(\mathbf{1}_X),$$
whose image meets ``scalar automorphisms''
$\mult_{\mathbf{1}_X}(\C^{*})$ only at the identity automorphism of $\mathbf{1}_X$.
Taking the ``direct sum'' of  the embedding  $\SB(\LL) \subset \Hol_0(\LL)$
with $\bar{\kappa}_1$, we get a group {\sl embedding}
$$\bar{\kappa}_2:\SB(\LL) \hookrightarrow \Hol_0( \LL\oplus \mathbf{1}_X)=\Hol_0(\mathcal{V}),$$
whose image also meets precisely one element of $\mult_{\mathcal{V}}(\C^{*})$, namely the {\sl identity automorphism} of $\mathcal{V}$. 
Clearly, the corresponding action map 
$$\SB(\LL)\times \mathcal{V}\to \mathcal{V},  \ \uu, \left(\mathfrak{l}_z; (z,\lambda)\right)\mapsto \left(\uu(\mathfrak{l}_z); (z+\rho_{\LL}(\mathfrak{u},\lambda)\right)
\ \forall z\in X, \mathfrak{l}_z\in \LL_z, \lambda\in \C, \uu\in \SB(\LL)$$
 is holomorphic as well, since the action map $\SB(\LL)\times \LL \to \LL$ is holomorphic,
thanks to Corollary \ref{SBholAction}. It is also clear that $\bar{\kappa}_2(\uu)$ is a lifting of $T_x$ with $x=\rho_{\LL}(u)$.
It follows that the  group homomorphism
$$\Upsilon_{\LL}:\SB(\LL)  \to \Hol(\Pb(\mathcal{V}))=\Hol(Y_{\LL})$$
induced by $\bar{\kappa}_2$
is an embedding, the corresponding action map $\SB(\LL) \times Y_{\LL} \to Y_{\LL}$ is holomorphic
and $\Upsilon_{\LL}(\uu)$ is a a lifting of $T_x$ with $x=\rho_{\LL}(u)$.
\end{proof}

\section{Jordan properties of theta groups}
\label{JordanTheta}
We keep the notation and assumption of Section \ref{AppelHumbert}.

\begin{thm}
\label{JordanT}
Suppose that $H \ne 0$. Then $\G(H,\alpha)$ is Jordan and its Jordan constant
 is 
$$\sqrt{\#(L_{1,E}^{\perp}/L_1)}=\prod_{i=1}^{g-g_0}d_i(E).$$
\end{thm}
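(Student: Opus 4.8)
The plan is to translate the computation of the Jordan constant into symplectic linear algebra over the finite abelian group $G:=L_{1,E}^{\perp}/L_1$, on which (by Subsection~\ref{finiteGroupPairing}) the commutator pairing $\mathbf{e}_E$ is \emph{nondegenerate} and \emph{alternating}. Write $d:=\prod_{i=1}^{g-g_0}d_i(E)$, so that $\#G=d^2$ by \eqref{splitU}. I will establish two matching inequalities. First, $J_{\G(H,\alpha)}\le d$: every finite subgroup admits an abelian normal subgroup of index at most $d$. Second, $J_{\G(H,\alpha)}\ge d$: there is a finite subgroup none of whose abelian subgroups has index below $d$. Both rest on Theorem~\ref{abelianSUB} together with the elementary remark that an isotropic subgroup $I$ of $G$ satisfies $\#I\le\sqrt{\#G}=d$, since nondegeneracy forces $\#I\cdot\#I^{\perp}=\#G$ while isotropy gives $I\subseteq I^{\perp}$, whence $(\#I)^2\le\#G$.

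For the upper bound, let $\tilde B\subset\G(H,\alpha)$ be a finite subgroup, and set $B:=\bar\kappa(\tilde B)\subset L_E^{\perp}/L$ and $B_1:=\pr_2(B)\subset G$, where $\pr_2$ is the projection \eqref{projection}. Since $B$ is a subgroup of $X$ it is abelian. Choose a maximal isotropic subgroup $N_1$ of $B_1$ for $\mathbf{e}_E$ and put $N:=(\pr_2|_B)^{-1}(N_1)$, so that $[B:N]=[B_1:N_1]$; by the computation in the proof of Theorem~\ref{abelianSUB}(iv) the equality $\mathbf{e}_{H,\alpha}(x_1,x_2)=\mathbf{e}_E(\pr_2 x_1,\pr_2 x_2)$ shows $N$ is isotropic for $\mathbf{e}_{H,\alpha}$. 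Hence $\tilde N:=\bar\kappa^{-1}(N)\cap\tilde B$ is abelian by Theorem~\ref{abelianSUB}(i) and normal in $\tilde B$ by Theorem~\ref{abelianSUB}(0) (as $N$ is a subgroup of the abelian group $B$), with $[\tilde B:\tilde N]=[B:N]=[B_1:N_1]$. To bound the last index, let $R$ be the radical of $\mathbf{e}_E|_{B_1}$; the induced form on $\bar B_1:=B_1/R$ is nondegenerate, a maximal isotropic subgroup $\bar N_1$ of $\bar B_1$ equals its own orthogonal complement (extend by any orthogonal element otherwise), and nondegeneracy gives $(\#\bar N_1)^2=\#\bar B_1$. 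Therefore $[B_1:N_1]=[\bar B_1:\bar N_1]=\sqrt{\#\bar B_1}\le\sqrt{\#B_1}\le\sqrt{\#G}=d$, so $[\tilde B:\tilde N]\le d$.

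For the lower bound I exhibit a finite subgroup realizing the index $d$. Using the symplectic basis of \eqref{splitUs}, put $u_i:=\tfrac{1}{d_i(E)}e_i$ and $w_i:=\tfrac{1}{d_i(E)}f_i$ in $L_{1,E}^{\perp}$. Since $d_i(E)u_i=e_i\in L$ and $\AA_{H,e_i}\in\tilde L$ becomes the identity in $\G(H,\alpha)$, the $d_i(E)$-th power of the image of $\BB_{H,u_i}$ lies in $\mult_{\LL}(\C^{*})$; as $\C^{*}$ is divisible, I may multiply by a scalar from $\mult_{\LL}(\C^{*})$ to obtain $g_i\in\G(H,\alpha)$ of order $d_i(E)$ with $\bar\kappa(g_i)=u_i+L$, and likewise $h_i$ from $\BB_{H,w_i}$. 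By \eqref{thetaCOMM} and \eqref{splitUi} the only nontrivial commutators are $[g_i,h_i]=\mult_{\LL}\!\left(\eb^{2\pi\ib/d_i(E)}\right)$, a primitive $d_i(E)$-th root of unity, while all $g_i,g_j$, all $h_i,h_j$, and $g_i,h_j$ with $i\ne j$ commute. Thus $\tilde B:=\langle g_i,h_i\mid 1\le i\le g-g_0\rangle$ is a finite group, a central product of Heisenberg groups, with $\bar\kappa(\tilde B)=L_{1,E}^{\perp}/L_1=G$ and $Z:=\tilde B\cap\mult_{\LL}(\C^{*})$ satisfying $\#\tilde B=\#Z\cdot\#G$. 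If $\tilde A\subset\tilde B$ is abelian then $A:=\bar\kappa(\tilde A)\subset G$ is isotropic for $\mathbf{e}_E$ by Theorem~\ref{abelianSUB}(i), so $\#A\le d$ and $\#\tilde A\le\#Z\cdot\#A\le\#Z\cdot d$, giving $[\tilde B:\tilde A]\ge\#G/d=d$. Hence no abelian subgroup of $\tilde B$, in particular no abelian normal one, has index below $d$, and $J_{\G(H,\alpha)}\ge d$.

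I expect the main obstacle to be the finite-group construction in the lower bound: one must confirm that the scalar-adjusted generators $g_i,h_i$ really have finite order and that they generate a \emph{finite} group with the stated commutator pairing, which relies on the orthogonal normal form \eqref{splitUs}--\eqref{splitUi} and on $E(\tfrac{1}{d_i(E)}e_i,\tfrac{1}{d_j(E)}e_j)=0$ for all $i,j$. The linear-algebra input of the upper bound---that a maximal isotropic subgroup of a nondegenerate alternating finite pairing is self-orthogonal of square-root order---is routine once one notes that $\mathbf{e}_E$ is alternating, so that cyclic subgroups are automatically isotropic and maximal isotropics can be recognized by the extension argument above.
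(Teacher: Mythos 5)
Your argument is correct, and the two halves compare to the paper's proof as follows. The upper bound is essentially the paper's: both pass to $B_1=\mathrm{pr}_2\bar\kappa(\tilde B)\subset L_{1,E}^{\perp}/L_1$, take a maximal isotropic subgroup for $\mathbf{e}_E$, pull it back to an abelian normal subgroup of $\tilde B$ via Theorem \ref{abelianSUB}, and bound its index by $\sqrt{\#(L_{1,E}^{\perp}/L_1)}$; the paper bounds $[B_1:A_1]$ through the embedding $B_1/A_1\hookrightarrow\Hom(A_1,\C^{*})$, while you quotient by the radical and use self-orthogonality of a maximal isotropic --- the same linear algebra packaged differently. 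The genuine divergence is in the lower bound. The paper invokes Lemma \ref{liftFinite}, an abstract lifting statement (proved via the $\bmu_{d^2}$ trick and divisibility of $\C^{*}$) that produces a finite subgroup $\tilde\Delta$ over all of $\Delta=L_{1,E}^{\perp}/L_1$, and then argues that every abelian \emph{normal} subgroup of $\tilde\Delta$ has index divisible by $\sqrt{\#\Delta}$. You instead build the witness group explicitly from the symplectic basis of \eqref{splitUs}--\eqref{splitUi}: scalar-correct the images of $\BB_{H,e_i/d_i(E)}$ and $\BB_{H,f_i/d_i(E)}$ to generators $g_i,h_i$ of orders $d_i(E)$, note via \eqref{thetaCOMM} that the only nontrivial commutators are the central scalars $\eb^{2\pi\ib/d_i(E)}$ (so the generated group is finite, being a central extension of the finite abelian group $G$ by the finite central commutator subgroup), and run a counting argument $\#\tilde A\le\#Z\cdot\#A\le\#Z\cdot\sqrt{\#G}$ valid for \emph{all} abelian subgroups, not just normal ones. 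What your route buys is independence from Lemma \ref{liftFinite} and a concrete Heisenberg-type model of the extremal subgroup; what the paper's route buys is a lifting lemma that works for an arbitrary finite subgroup of $K(\LL(H,\alpha))$ without choosing a basis. The only points worth tightening in a final write-up are the ones you flagged plus one you glossed over: state explicitly that the radical $R$ lies in every maximal isotropic subgroup of $B_1$ (so that $[B_1:N_1]=[\bar B_1:\bar N_1]$), and that finiteness of $\langle g_i,h_i\rangle$ follows because its commutator subgroup is central and finite while the abelianization is generated by finitely many torsion elements.
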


\begin{cor}
\label{JordanN}
Let $H\ne 0$ and $n$ be a positive integer such that
$$E(L,L)\subset n\mathbb{Z}.$$
Then the Jordan constant $J_{\G(H,\alpha)}$ of $\G(H,\alpha)$ is divisible by $n$. In particular, 
$J_{\G(H,\alpha)}\ge n$.
\end{cor}

\begin{proof}[Proof of Corollary \ref{JordanN}]
By Lemma \ref{divn}, $\#(L_{1,E}^{\perp}/L_1)$ is divisible by $n^2$.
Now the desired result follows readily from Theorem \ref{JordanT}.
\end{proof}

We will need the following Lemma that will be proven at the end of this section.

\begin{lem}
\label{liftFinite}
Let $\Delta$ be a finite subgroup in $K(\LL(H,\alpha))$.  Then there exists a finite subgroup $\tilde{\Delta}$ in $\G(H,\alpha)$ such that 
$\bar{\kappa}(\tilde{\Delta})=\Delta$.
\end{lem}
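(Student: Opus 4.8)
The plan is to use that the short exact sequence \eqref{GHL2} exhibits $\G(H,\alpha)$ as a \emph{central} extension of $K(\LL(H,\alpha))=L_E^{\perp}/L$ by the \emph{divisible} group $\C^{*}$, together with the bimultiplicativity of the commutator pairing $\mathbf{e}_{H,\alpha}$ recorded in \eqref{thetaCOMM}. The one real subtlety is that \eqref{GHL2} need not split, so I cannot simply restrict a global group-theoretic section of $\bar{\kappa}$ over $\Delta$. Instead I would construct the desired lift one generator at a time and then verify that the subgroup it generates is finite.

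First I would fix a finite generating set $\delta_1,\dots,\delta_r$ of $\Delta$ and let $d_i$ be the order of $\delta_i$, so that $d_i\delta_i=0$ in $K(\LL(H,\alpha))$. Since $\bar{\kappa}$ is surjective, choose any $\tilde{\delta}_i\in\G(H,\alpha)$ with $\bar{\kappa}(\tilde{\delta}_i)=\delta_i$. Because $\bar{\kappa}(\tilde{\delta}_i^{\,d_i})=d_i\delta_i=0$, each power $\tilde{\delta}_i^{\,d_i}$ lies in $\ker(\bar{\kappa})=\C^{*}$; write $\tilde{\delta}_i^{\,d_i}=c_i$ with $c_i\in\C^{*}$. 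Using that $\C^{*}$ is divisible, choose $w_i\in\C^{*}$ with $w_i^{d_i}=c_i^{-1}$ and replace $\tilde{\delta}_i$ by $w_i\tilde{\delta}_i$; since $\C^{*}=\ker(\bar{\kappa})$ is central in $\G(H,\alpha)$, the new lift (still denoted $\tilde{\delta}_i$) satisfies $\tilde{\delta}_i^{\,d_i}=1$ while still $\bar{\kappa}(\tilde{\delta}_i)=\delta_i$. Thus every generator is lifted to an element of finite order.

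Next I would control the commutators. By \eqref{thetaCOMM} together with the discussion of the commutator pairing in Subsection~\ref{groupCommutator}, the commutator $\tilde{\delta}_i\tilde{\delta}_j\tilde{\delta}_i^{-1}\tilde{\delta}_j^{-1}$ equals $\mathbf{e}_{H,\alpha}(\delta_i,\delta_j)\in\C^{*}$ and is independent of the chosen lifts. Since $\mathbf{e}_{H,\alpha}$ is bimultiplicative, $\mathbf{e}_{H,\alpha}(\delta_i,\delta_j)^{d_i}=\mathbf{e}_{H,\alpha}(d_i\delta_i,\delta_j)=\mathbf{e}_{H,\alpha}(0,\delta_j)=1$, so each such commutator is a root of unity. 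Hence all pairwise commutators of the generators lie in the finite group $\mu_M\subset\C^{*}$ of $M$-th roots of unity for a suitable $M$ (one may take $M$ to be the exponent of $\Delta$), and $\mu_M$ is central in $\G(H,\alpha)$ because it is contained in $\ker(\bar{\kappa})$.

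Finally, set $\tilde{\Delta}:=\langle\tilde{\delta}_1,\dots,\tilde{\delta}_r\rangle$. Since all pairwise commutators of the generators are central, $\tilde{\Delta}$ is nilpotent of class at most $2$ and its commutator subgroup is generated by those commutators, so $[\tilde{\Delta},\tilde{\Delta}]\subseteq\mu_M$ is finite. The abelianization $\tilde{\Delta}^{\mathrm{ab}}$ is generated by the images of the finitely many elements $\tilde{\delta}_i$, each of which has finite order; hence $\tilde{\Delta}^{\mathrm{ab}}$ is a finitely generated torsion abelian group and therefore finite. Consequently $\tilde{\Delta}$ is finite, and by construction $\bar{\kappa}(\tilde{\Delta})=\langle\delta_1,\dots,\delta_r\rangle=\Delta$, which is exactly the required lift. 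The main obstacle is precisely the possible non-splitting of \eqref{GHL2}; it is circumvented by the divisibility of $\C^{*}$ (to arrange finite-order lifts of the generators) and by the bimultiplicativity of $\mathbf{e}_{H,\alpha}$ (to keep the commutators among roots of unity), after which the finiteness criterion for class-$2$ groups concludes the argument.
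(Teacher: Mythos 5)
Your proof is correct, and it rests on the same two pillars as the paper's: the divisibility of the central subgroup $\C^{*}=\ker(\bar{\kappa})$ (to replace arbitrary lifts by finite-order lifts) and the fact that the commutator pairing $\mathbf{e}_{H,\alpha}$ takes values in roots of unity of order bounded by the exponent of $\Delta$. Where you diverge is in how finiteness of the lift is established. The paper lifts \emph{every} element $x\in\Delta$ to an element $\tilde{x}$ of the same order, forms the explicit finite \emph{set} $\tilde{\Delta}=\{\gamma\tilde{x}\mid \gamma\in\bmu_{d^2},\ x\in\Delta\}$ (with $d$ the exponent of $\Delta$), and then verifies by a direct computation with $(\tilde{x}_1\tilde{x}_2)^{d^2}=1$ that this set is closed under multiplication; this yields the explicit cardinality bound $\#(\tilde{\Delta})\le d^2\cdot\#(\Delta)$. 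You instead lift only a finite \emph{generating} set, observe that the subgroup generated is nilpotent of class at most $2$ with commutator subgroup inside the finite group $\bmu_M$, and conclude finiteness from the finiteness of the abelianization (a finitely generated abelian group with torsion generators). Your route trades the paper's explicit closure verification for a soft structural argument; it is slightly cleaner in that you never need to coordinate the lifts of $x$ and $-x$ or chase the scalar $\gamma$ relating $\tilde{x}_1\tilde{x}_2$ to $\tilde{x}_3$, at the cost of not producing an explicit description (or size bound) for $\tilde{\Delta}$ -- which is harmless here, since the lemma only requires existence of a finite lift.
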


\begin{proof}[Proof of Theorem \ref{JordanT}]
Let $\tilde{B}$ be a finite subgroup in $\G(H,\alpha)$. Let us consider its images
$$B=\bar{\kappa}(\tilde{B})\subset K(\LL(H,\alpha))=(\ker(H)/L_0)\oplus (L_{1,E}^{\perp}/L_1), \
B_1=\mathrm{pr}_2(B)=\mathrm{pr}_2 \bar{\kappa}(\tilde{B})\subset L_{1,E}^{\perp}/L_1.$$
Let $A_1$ be a {\sl maximal isotropic  subgroup} in $B_1$ with respect to $\mathbf{e}_E$.
The nondegenerate pairing $\mathbf{e}_E$ gives rise to an embedding
$$B_1/A_1 \hookrightarrow \Hom(A_1,\C^{*}), b+A_1 \mapsto \{a \mapsto \mathbf{e}_E(a,b) \ \forall a\in A_1\}.$$
Since the orders of finite commutative groups $A_1$ and $\Hom(A_1,\C^{*})$ coincide, 
$\#(B_1/A_1)$ divides $\#(A_1)$ and therefore $\#(B_1/A_1)^2$ divides $\#(B_1)$, which in turn, divides $\#(L_{1,E}^{\perp}/L_1)$. It follows that the index
$$[B_1:A_1]=\#(B_1/A_1)$$
{\sl does not exceed} (actually divides) $\sqrt{\#(L_{1,E}^{\perp}/L_1)}$.
Let us consider the subgroup
$$\tilde{A}:=(\mathrm{pr}_2 \bar{\kappa})^{-1}(A_1)\bigcap \tilde{B}.$$
Since $A_1$ is isotropic, it follows from Theorem \ref{abelianSUB}(iv)  that $\tilde{A}$ is a commutative subgroup. Since $A_1$ is obviously normal in (commutative) $B_1$,
the preimage $\tilde{A}$ of $A_1$ with respect to {\sl surjective}
$$\tilde{B}\overset{\mathrm{pr}_2\bar{\kappa}}{\longrightarrow} B_2$$
is a normal subgroup of  $\tilde{B}$, whose index does not exceed (actually equals)
$[B_1:A_1]$, which, in turn, does not exceed $\sqrt{\#(L_{1,E}^{\perp}/L_1)}$. It follows that $\G(H,\alpha)$ is Jordan and its Jordan constant does not exceed $\sqrt{\#(L_{1,E}^{\perp}/L_1)}$. We need to prove that the Jordan constant is, at least, $\sqrt{\#(L_{1,E}^{\perp}/L_1)}$.

In order to do that, let us  consider the finite subgroup
$$\Delta:=L_{1,E}^{\perp}/L_1=\{0\}\oplus\left(L_{1,E}^{\perp}/L_1\right)  \subset (\ker(H)/L_0)\oplus (L_{1,E}^{\perp}/L_1)=K(\LL(H,\alpha)).$$
By Lemma \ref{liftFinite}, there is a finite subgroup $\tilde{\Delta}\subset \G(H,\alpha)$ such that
$$\bar{\kappa}(\tilde{\Delta})=\Delta.$$
Let $A^{\prime} \subset \tilde{\Delta}$ be a {\sl commutative} normal subgroup of $\tilde{\Delta}$. By Theorem \ref{abelianSUB}(ii), the subgroup
$$A=\bar{\kappa}(A^{\prime})\subset \Delta= L_{1,E}^{\perp}/L_1$$
is  an isotropic subgroup  with respect to $\mathbf{e}_E$ and the index $[\left(L_{1,E}^{\perp}/L_1\right):A]$ is divisible by $\sqrt{\#(L_{1,E}^{\perp}/L_1)}$.    Let us define
$$\tilde{A}:=\bar{\kappa}^{-1}(A)\bigcap \tilde{\Delta} \subset  \tilde{\Delta}.$$
By Theorem \ref{abelianSUB}{0}, $\tilde{A}$ is a normal subgroup of $\tilde{\Delta}$ and
$$[\tilde{\Delta}:\tilde{A}]=[L_{1,E}^{\perp}/L_1:A].$$
This implies that $[\tilde{\Delta}:\tilde{A}]$ is divisible by $\sqrt{\#(L_{1,E}^{\perp}/L_1)}$.

Clearly, $\tilde{A}$ contains $A^{\prime}$. This implies that $[\tilde{\Delta}:A^{\prime}]$ is divisible by $[\tilde{\Delta}:\tilde{A}]$ and therefore is divisible by $\sqrt{\#(L_{1,E}^{\perp}/L_1)}$. However, if $U$ is a {\sl maximal isotropic subgroup} in  $L_{1,E}^{\perp}/L_1$ then
$$\#(U)=\sqrt{\#(L_{1,E}^{\perp}/L_1)}=[L_{1,E}^{\perp}/L_1:U].$$
Let  us put
$$\tilde{U}:=\bar{\kappa}^{-1}(U)\bigcap \tilde{\Delta} \subset  \tilde{\Delta}.$$
By Theorem \ref{abelianSUB}(0,ii),  $\tilde{U}$ is a commutative normal subgroup in 
$L_{1,E}^{\perp}/L_1$ of index $\sqrt{\#(L_{1,E}^{\perp}/L_1)}$.
It follows that the Jordan constant of $\G(H,\alpha)^{0}$ is, at least, $\sqrt{\#(L_{1,E}^{\perp}/L_1)}$.  This  completes the proof.

\end{proof}

\begin{proof}[ Proof of Lemma \ref{liftFinite}]
In what follows we identify  $\C^{*}$ with its image in $\G(H,\alpha)$ and view it as a certain central  subgroup of $\G(H,\alpha)$.
Let $d$ be the exponent of $\Delta$. Let us consider  the finite multiplicative group $\mathbf{\mu}_{d}$ of all $d$th roots of unity and the finite multiplicative group $\mathbf{\mu}_{d^2}$ of all $d^2$th roots of unity in $\C$. We have
$$\mathbf{\mu}_{d} \subset \mathbf{\mu}_{d^2} \subset \C^{*}\subset \G(H,\alpha).$$

For each $x \in \Delta$ choose its lifting $\tilde{x} \in \G(H,\alpha)$ with the same order as $x$
and such that the lifting $\widetilde{(-x)}$ of $-x$ coincides with $\tilde{x}^{-1}$.
 (This is possible, since  $\C^{*}$ is a central divisible subgroup in $\G(H,\alpha)$.)  Let us consider the finite set
$$\tilde{\Delta}=\{\gamma \tilde{x}\mid \gamma \in \mathbf{\mu}_{d^2}, x \in \Delta\}\subset \G(H,\alpha).$$
Clearly, $\bar{\kappa}(\gamma \tilde{x})=x$ and therefore $\bar{\kappa}(\tilde{\Delta})=\Delta$.  It remains to check that $\tilde{\Delta}$ is a subgroup in  $\G(H,\alpha)$.  Let $x_1,x_2 \in \Delta$ and $x_3=x_1 +x_2\in \Delta$. We need to compare $\tilde{x_1}\tilde{x_2}$ 
and $\tilde{x_3}$ in  $\G(H,\alpha)$. Notice that there is $\gamma \in \C^{*}$ such that
$$\tilde{x_3}=\gamma \tilde{x_1}\tilde{x_2}.$$
In addition,
$$\tilde{x_1}^d=\tilde{x_2}^d=\tilde{x_3}^d=1  \in \C^{*}\subset  \G(H,\alpha).$$
On the other hand, we have 
$$\gamma_0:=\tilde{x_1}\tilde{x_2}\tilde{x_1}^{-1}\tilde{x_2}^{-1}\in \mathbf{\mu}_d  \in \C^{*}\subset  \G(H,\alpha),$$
since the orders of  both $\tilde{x_1}$ and $\tilde{x_2}$  divide $d$. It follows that the images of $\tilde{x_1}$ and $\tilde{x_2}$ in the quotient
$\G(H,\alpha)/\mathbf{\mu}_d$ do commute and therefore the image of 
$\tilde{x_1}\tilde{x_2}$ in $\G(H,\alpha)/\mathbf{\mu}_d$ has order dividing $d$. This means that
$$(\tilde{x_1}\tilde{x_2})^d \in \mathbf{\mu}_d$$
and therefore 
$$(\tilde{x_1}\tilde{x_2})^{d^2}=1.$$
It follows that
$$1=\tilde{x_3}^{d^2}=\left(\gamma \tilde{x_1}\tilde{x_2}\right)^{d^2}=
\gamma^{d^2}(\tilde{x_1}\tilde{x_2})^{d^2}=\gamma^{d^2}\cdot 1=1.$$
This implies that $\gamma^{d^2}=1$ and therefore
$$\tilde{x_1}\tilde{x_2}=\gamma^{-1}\tilde{x_3}\in \tilde{\Delta}.$$
It follows  that $\tilde{\Delta}$ is a subgroup. (See also \cite[Sect. 4, p. 132, ex. 3]{BourbakiAI}.)
\end{proof}

\section{Proof of Theorem   \ref{pi01}}
\label{zeroproof}

We may and will assume that $\LL=L(H,\alpha)$.
We keep the notation and assumptions of  Section \ref{JordanTheta}.

By Theorem \ref{thetaS},
$\mathbf{S}(\LL)=\G(H,\alpha)$. By Theorem \ref{theta0},
$$\G(H,\alpha)^{0}:=\bar{\kappa}^{-1}(\ker(H)/L_0)$$
is the center of $\G(H,\alpha)$.

Suppose that $H=0$. Then 
 $K(\LL(H,\alpha))=X$  \cite[Cor. 1.9 on p. 7]{Kempf}; in particular, it is connected, i.e., the number of its connected components is $1$.
On the other hand, by Theorem \ref{theta0}, $\G(H,\alpha)$ is commutative
and therefore its Jordan constant is $1$. This gives us the desired result when $H = 0$.

Suppose that $H \ne 0$. By Theorem \ref{JordanT}, the Jordan constant of $\G(H,\alpha)$
is $\sqrt{\#\left(L_{1,E}^{\perp}/L_1\right)}$. By Remark \ref{pi0K},
$L_{1,E}^{\perp}/L_1$ is isomorphic to the group \newline 
$K(\LL(H,\alpha))/K(\LL(H,\alpha))^{0}$ of connected component of $K(\LL(H,\alpha))$.
This implies that the Jordan constant of $\G(H,\alpha)$ is $\sqrt{\#\left(K(\LL(H,\alpha))/K(\LL(H,\alpha))^{0}\right)}$.
 This completes the proof.

\section{$\C\Pb^1$-bundles over complex tori}
We start with the following elementary but useful observations that allow us to handle the groups of bimeromorphic automorphisms of $\C\Pb^1$-bundles, using an information about the groups of biholomorphic automorphisms.

\begin{rems}
\label{section}
Let $\mathcal{L}$ and $\mathcal{N}$ be  holomorphic line bundles over $X$. Assume that $\mathcal{L}$ admits a {\sl nonzero} holomorphic section say, $s$.  Let $n$ be a positive integer.

\begin{itemize}
\item[(0)]
Clearly,   $\mathcal{L}^n$ also admits a {\sl nonzero} holomorphic section $s^{\otimes n}$.
\item[(i)]
The holomorphic $\C$-linear map of rank 2 holomorphic vector bundles on $X$
$$ \mathcal{N} \oplus  \mathbf{1}_X \to  (\mathcal{N}\otimes\mathcal{L})\oplus\mathbf{1}_X, \
(t_x; x,\lambda) \mapsto (t_x\otimes  s(x);  x,\lambda) \ \forall x \in X, t_x \in \mathcal{N}_{x}, \lambda\in \C$$
induces a bimeromorphic isomorphism of the corresponding 
 $\C\Pb^1$-bundles $\Pb( \mathcal{N}\oplus \mathbf{1}_X)=Y_{\mathcal{N}}$ and  $\Pb((\mathcal{N}\otimes \mathcal{L})\oplus \mathbf{1}_X)
=Y_{\mathcal{N}\otimes \mathcal{L}}$ over $X$. Therefore the groups    $\Bim(Y_{\mathcal{N}})$  and $\Bim(Y_{\mathcal{N}\otimes \mathcal{L}})$  are isomorphic.

Taking into account that  $\mathcal{L}^n$ also admits a {\sl nonzero} holomorphic section, we obtain that  the groups    $\Bim\left(Y_{\mathcal{N}}\right)$  and 
$\Bim(Y_{\mathcal{N}\otimes \mathcal{L}^n})$  are isomorphic  for all positive integers $n$. 

\item[(ii)]
It follows from  (i) applied to $\mathcal{N}=\mathbf{1}_X$ combined with Example \ref{rP1} that for all positive integers $n$ the $\C\Pb^1$-bundles
$X \times \C\Pb^1$ and $Y_{\mathcal{L}^n}$ are bimeromorphic, and therefore the groups $\Bim(X\times \C\Pb^1)$ and
$\Bim(Y_{\mathcal{L}^n})$  are isomorphic.
\item[(iii)]
It follows from (i) and (ii) that for all positive integers $n$ the group $\Bim\left(Y_{\mathcal{N}}\right)$ contains a subgroup 
isomorphic to $\mathbf{S}(\mathcal{N}\otimes \mathcal{L}^n)$ and the group $\Bim(X\times \C\Pb^1)$ contains a subgroup
isomorphic to  $\mathbf{S}(\mathcal{L}^n)$. 
 We will use this observation  (together with Theorem \ref{pi0}) in the proof of Theorems \ref{manP1} and \ref{manP2}.
\end{itemize}
\end{rems}

\label{firstproof}
\begin{proof}[Proof of Theorem \ref{manP1}]
Since $X$ has {\sl positive} algebraic dimension, it follows from the results of \cite[Ch. 2, Sect. 6]{BL} that there exists a surjective holomorphic homomorphism $\psi:X \to A$ to a positive-dimensional complex abelian variety $A$. There exists a very ample holomorphic line bundle $\mathcal{M}$ on $A$ such that the group $\mathrm{H}^0(A,\mathcal{M})$ of global sections of $\mathcal{M}$
has $\C$-dimension at least $2$. Let us consider the induced   holomorphic line bundle $\psi^{*}\mathcal{M}$ on $X$.
Since $\psi$ is surjective,  the group $\mathrm{H}^0(X,\psi^{*}\mathcal{M})$ of global sections of $\psi^{*}\mathcal{M}$ also has $\C$-dimension at least $2$,
because $\mathrm{H}^0(A,\mathcal{M})$ embeds into  $\mathrm{H}^0(X,\phi^{*}\mathcal{M})$.
 There exists an A.-H. data $(H,\alpha)$    on $X$ such $\psi^{*}\mathcal{M}$ is isomorphic to $\LL(H,\alpha)$.
This implies that $\LL(H,\alpha)$ has at least two linearly independent nonzero holomorphic sections. Now if $H=0$ then
$\LL(H,\alpha)=\LL(0,\alpha)$ and one of the following two conditions holds:

\begin{itemize}
\item[(i)]
 $\alpha=\alpha_0$, i.e., $\LL(H,\alpha)=\LL(0,\alpha_0)$ is isomorphic to $\mathbf{1}_X$ 
and $$\mathrm{H}^0(X,\LL(H,\alpha))=\mathrm{H}^0(X,\LL(0,\alpha_0))=\mathrm{H}^0(X, \mathbf{1}_X)=\C.$$
\item[(ii)]
$\alpha \ne \alpha_0$. It follows from \cite[Th. 2.1]{Kempf} that 
$$\mathrm{H}^0(X,\LL(H,\alpha))=\mathrm{H}^0(X,\LL(0,\alpha))=\{0\}.$$
\end{itemize}
Since the $\C$-dimension of $\mathrm{H}^0(X,\LL(H,\alpha))$ is at least $2$,
neither (i) nor (ii) holds.
This implies  that   $H\ne 0$.

 Let $n$ be a positive integer. Then $nH\ne 0$ and the holomorphic line bundle
$$\mathcal{L}(nH,\alpha^n)\cong\mathcal{L}(H,\alpha)^{\otimes n}$$
over $X$
also admits a nonzero holomorphic section.  Notice that
$$E_n:=\IM(nH)=nE \ \text{ where } E=\IM(H).$$
In particular,
$E_n(L,L)\subset n\Z$.
It follows from  Corollary \ref{JordanN} applied to $\mathcal{L}(nH,\alpha^n)$  that the Jordan constant of $\G\left(nH,\alpha^n\right)$ is at least $n$. By Theorem \ref{embedP1}, there is a
 group embedding 
 $$\G(nH, \alpha^n)\hookrightarrow \Hol\left(\Pb\left(\mathcal{L}(nH,\alpha^n\right)\oplus  \mathbf{1}_X\right)).$$ 
 By Remark \ref{section}, $\Bim(X\times \C\Pb^1)$ and $\Bim\left(\Pb\left( \mathcal{L}(nH,\alpha^n)\oplus \mathbf{1}_X\right)\right)$ are isomorphic.  This implies  that for all positive integers $n$ the group $\Bim(X\times \C\Pb^1)$ contains a subgroup, whose Jordan constant 
 is at least $n$.
 It follows that $\Bim(X\times \C\Pb^1)$ is {\sl not} Jordan.
\end{proof}

\section{Pencils of Hermitians forms}
\label{pencils}
In order to prove Theorem \ref{manP2}, we need to construct families of Hermitian forms and corresponding  alternating forms. We keep the notation and assumptions of Section \ref{AppelHumbert}.

Let $\HH: V \times V\to\C$ be an Hermitian form. Let us consider its imaginary part
$$\EE: V \times V \to \R, \ (u,v) \mapsto \IM(\HH(u,v)),$$
which is an alternating $\R$-bilinear form on $V$.
Let us assume that
$$\EE(L,L) \subset \Z.$$

\begin{defn}
We say that $\HH$ is dominated by $H$ if
$$\ker(H)\subset \ker(\HH).$$
\end{defn}

\begin{sect}
\label{dom}
For every positive integer $n$ let us consider the Hermitian form
$$H_n:=H+n\HH: V \times V \to \C,$$
 whose  imaginary part
 $$E_n:=\IM(H_n)=E+n\EE: V \times V \to \R$$
 is an alternating $\R$-bilinear form on $V$.
 Clearly, for all $n$
 $$E_n(L,L) \subset \Z.$$
 If $\HH$ is dominated by $H$ then every $H_n$ is also dominated by $H$.
 \end{sect}
 
 \begin{thm}
 \label{pencilH} 
 Suppose that $H\ne 0$  (i.e., $g>g_0$) and that $\HH$ is dominated by $H$.
 Then
 for all but finitely many $n$ 
 \begin{equation}
 \label{kerHn}
 \ker(H)=\ker(\HH)
 \end{equation}
 and the restriction
 \begin{equation}
 \label{EnND}
 E_n\bigm|_{L_1}: L_1\times L_1 \to \Z
 \end{equation}
 of $E_n$ to $L_1$ is a nondegenerate alternating bilinear form.
 \end{thm}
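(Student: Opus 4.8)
The plan is to derive both assertions from a single non-vanishing statement for the family of restricted forms $E_n|_{L_1}$, and then to extract the kernel identity \eqref{kerHn} (which asserts $\ker(H_n)=\ker(H)$) from that nondegeneracy together with the domination hypothesis. First I would dispose of \eqref{EnND}. Fixing the basis $\{\bar{l}_1,\dots,\bar{l}_{2g-2g_0}\}$ of $L_1$ from Subsection \ref{normalForm1}, the matrix of $E_n|_{L_1}$ has entries $E_n(\bar{l}_i,\bar{l}_j)=E(\bar{l}_i,\bar{l}_j)+n\,\EE(\bar{l}_i,\bar{l}_j)$, each an affine-linear function of $n$. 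Hence
$$
P(t):=\det\bigl(E(\bar{l}_i,\bar{l}_j)+t\,\EE(\bar{l}_i,\bar{l}_j)\bigr)_{i,j}
$$
is a polynomial in $t$ of degree at most $2(g-g_0)$, and its value at $t=0$ is $\det(E|_{L_1})$, which by Subsections \ref{normalForm} and \ref{normalForm1} is a nonzero integer (the nondegeneracy of $E|_{L_1}$ being built into the splitting $L=L_0\oplus L_1$). Thus $P$ is not the zero polynomial and has at most $2(g-g_0)$ real roots; for every integer $n$ outside this finite set, $\det(E_n|_{L_1})=P(n)\ne 0$, so $E_n|_{L_1}$ is nondegenerate.

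For the kernel identity, the inclusion $\ker(H)\subseteq\ker(H_n)$ holds for every $n$ and uses only domination: if $u\in\ker(H)$ then $H(u,\cdot)=0$, and since $\ker(H)\subset\ker(\HH)$ we also have $\HH(u,\cdot)=0$, so $H_n(u,\cdot)=H(u,\cdot)+n\HH(u,\cdot)=0$. For the reverse inclusion I would restrict to those $n$ for which $E_n|_{L_1}$ is nondegenerate and work with the real direct sum $V=\ker(H)\oplus(L_1\otimes\R)$ of Subsection \ref{normalForm}, recalling $\ker(H)=L_0\otimes\R$. Exactly as in the proof of Lemma \ref{identity}, the relations \eqref{ib} applied to $H_n$, together with the fact that $L$ spans $V$ over $\R$, yield
$$
\ker(H_n)=\{u\in V\mid E_n(u,l)=0\ \forall l\in L\}.
$$
Given $u\in\ker(H_n)$, write $u=u'+w$ with $u'\in\ker(H)$ and $w\in L_1\otimes\R$. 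Since $u'\in\ker(H)\subseteq\ker(H_n)$ by the inclusion just proved, the difference $w=u-u'$ lies in $\ker(H_n)$ as well, so $E_n(w,l_1)=0$ for all $l_1\in L_1$; as $L_1$ spans $L_1\otimes\R$ over $\R$ and $E_n$ is $\R$-bilinear, $w$ is $E_n$-orthogonal to all of $L_1\otimes\R$. Nondegeneracy of $E_n|_{L_1}$ then forces $w=0$, whence $u=u'\in\ker(H)$. This gives $\ker(H_n)\subseteq\ker(H)$, and combined with the previous inclusion it establishes \eqref{kerHn} for the same cofinite set of $n$.

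I expect the only genuinely delicate point to be this reverse inclusion $\ker(H_n)\subseteq\ker(H)$: everything rests on the fact that the very sublattice $L_1$ controlling the nondegeneracy of $E_n$ simultaneously provides a real complement to $\ker(H)$, so that $E_n$-orthogonality to $L_1$ suffices to annihilate the $(L_1\otimes\R)$-component. By contrast, the nondegeneracy step is routine once one observes that $\det(E_n|_{L_1})$ is a polynomial in $n$ already nonzero at $n=0$; the role of the domination hypothesis is precisely to convert this non-vanishing into the statement that the perturbed kernel is exactly $\ker(H)$ rather than something strictly larger.
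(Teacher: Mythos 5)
Your proposal is correct and follows essentially the same route as the paper: both arguments show that $\det\bigl(E_n\bigm|_{L_1}\bigr)$ is a polynomial in $n$ that is not identically zero because $\det\bigl(E\bigm|_{L_1}\bigr)\neq 0$ (you read this off the constant term for the pencil $E+t\,\EE$, the paper off the leading coefficient for $\EE+t\,E$ --- a cosmetic difference stemming from the paper's own inconsistency about whether $H_n=H+n\HH$ or $\HH+nH$), and both then deduce the kernel identity from nondegeneracy on $L_1\otimes\R$ together with the vanishing of $E_n$ on $\ker(H)$ supplied by domination and the splitting $V=\ker(H)\oplus(L_1\otimes\R)$. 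Your reading of \eqref{kerHn} as $\ker(H_n)=\ker(H)$ matches what the paper actually proves.
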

 
 \begin{proof}
 Let $\tilde{\EE}$ be the  square matrix of $\EE\bigm|_{L_1}$ of order  $2g-2g_0$
 with respect to the basis $\{\bar{l}_1, \dots, \bar{l}_{2g-2g_0}\}$ of $L_1$.  (Recall that  $\tilde{E}$ is the matrix of $E\bigm|_{L_1}$ with respect to the same basis.)
 Then
 for all $n$ the matrix $\tilde{\EE}+n\tilde{E}$ coincides with  the  matrix $\tilde{E}_n$ of $E_n\bigm|_{L_1}$.
 with respect to $\{\bar{l}_1, \dots, \bar{l}_{2g-2g_0}\}$. Recall that the matrix $\tilde{E}$ is nondegenerate and
  consider the polynomial
 \begin{equation}
 \label{detPo}
 f_{H,\HH}(T): =\det(\tilde{E})\cdot \det\left(\tilde{E}^{-1}\tilde{\EE}+T\right)\in \Q[T].
 \end{equation} 
 Clearly, $f_{H,\HH}(T)$ is a degree $2g-2g_0$ polynomial with (positive) leading coefficient 
 $\det(\tilde{E})$.
  We have
 $$\det(\tilde{E}_n)=\det(\tilde{\EE}+n\tilde{E})=\det\left(\tilde{E}\left(\tilde{E}^{-1}\tilde{\EE}+n\right)\right)=$$
 $$\det(\tilde{E})\det\left(\tilde{E}^{-1}\tilde{\EE}+n\right)= f_{H,\HH}(n).$$
 In other words,
 \begin{equation}
 \label{detPol}
  \det(\tilde{E}_n)= f_{H,\HH}(n).
  \end{equation}
 Since $f_{H,\HH}(T)$ is {\sl not} a constant, $\det(\tilde{E}_n)\ne 0$ for all but finitely many $n$.
  
 Let us assume that $\det(\tilde{E}_n)\ne 0$, which is true for all but finitely many positive integers $n$. Then
  $E_n\bigm|_{L_1}$ is {\sl nondegenerate}. It follows that  the restriction of $E_n$ to $L_1\otimes\R$ is nondegenerate as well.
 On the other hand, the restriction of  $E_n$  to  $\ker(H)$ is identically $0$. This implies  that $\ker(E_n)=\ker(H)$
 and therefore $$\ker(H_n)=\ker(E_n)=\ker(H).$$ 
 \end{proof}
 
 \begin{defn}
 \label{perpDef}
 Suppose that $H \ne 0$ and $n$ is a positive integer such that $\ker(H)=\ker(\HH)$  and   $E_n\bigm|_{L_1}$ is a {\sl nondegenerate}  (By Theorem \ref{pencilH}, these properties hold for all but finitely many positive integers $n$.)  Let us define
 $L_{1,E_n}^{\perp}$ as
 $$L_{1,E_n}^{\perp}=\{\tilde{x}\in  L_1\otimes\R\mid E_n(\tilde{x},l)\in \Z \ \forall l\in L\}.$$
 \end{defn}
 
 \begin{rem}
 \label{quotientPerp}
 
 Applying arguments of  Subsection \ref{normalForm1} to
  {\sl nondegenerate} $E_n\bigm|_{L_1}$  (instead of $E\bigm|_{L_1}$), we obtain that
 $L_{1,E_n}^{\perp}$ is a free $\Z$-module of rank $2g-2g_0$ that lies in $L_1\otimes \Q$ and contains $L_1$ as a subgroup of finite index, i.e.,
 the quotient  $L_{1,E_n}^{\perp}/L_1$ is a finite commutative group.
 
 It follows from  \eqref{detPol}  and the arguments of Subsection \ref{normalForm1} applied to $E_n$ (instead of $E$) that
 \begin{equation}
 \#(L_{1,E_n}^{\perp}/L_1)=\det(\tilde{E}_n)= f_{H,\HH}(n).
 \end{equation}
 Since  $ f_{H,\HH}(T)$ is a polynomial of positive degree,  if $n$ tends to infinity
 then $\#(L_{1,E_n}^{\perp}/L_1)$ also tends to infinity, i.e., 
 $\sqrt{\#(L_{1,E_n}^{\perp}/L_1)}$  tends to infinity as well.
 \end{rem}
 
 \begin{thm}
 \label{JordanP}
 Let $H \ne 0$ be a semi-positive Hermitian form, $\HH$ a Hermitian form that is dominated  by $H$,
  $(\HH,\alpha)$  an A.-H. data, $\mathcal{L}(\HH,\alpha)$ the corresponding holomorphic line bundle on $X$
  and $Y_{\mathcal{L}(\HH,\alpha)}$ the corresponding $\C\Pb^1$-bundle on $X$.
  
  Then the group $\Bim(Y_{\mathcal{L}(\HH,\alpha)})$ is not Jordan.
 \end{thm}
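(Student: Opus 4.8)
The plan is to imitate the proof of Theorem \ref{manP1}, the one new feature being that the $\C\Pb^1$-bundle is built from $\mathcal{L}(\HH,\alpha)$ rather than from the trivial bundle. The idea is to absorb $\mathcal{L}(\HH,\alpha)$ into the pencil of Section \ref{pencils}: for each positive integer $n$ I would produce a line bundle whose Hermitian form is $H_n=\HH+nH$ and whose associated $\C\Pb^1$-bundle is bimeromorphically interchangeable with $Y_{\mathcal{L}(\HH,\alpha)}$, and then invoke the fact that the theta group $\G(H_n,\cdot)$ has Jordan constant $\sqrt{\#(L_{1,E_n}^{\perp}/L_1)}=\sqrt{f_{H,\HH}(n)}$, which tends to infinity with $n$.

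First I would use the semi-positivity of $H$ to manufacture a nonzero holomorphic section. Since $H\ge 0$ and $H\ne 0$, the kernel $\ker(H)$ is a proper complex subspace, $L_0=L\cap\ker(H)$ is a lattice of maximal rank in it (Subsection \ref{normalForm}), and $H$ descends to a positive-definite Hermitian form on the quotient torus $A:=X/X_0$, where $X_0:=\ker(H)/L_0$. Thus $A$ is a positive-dimensional complex abelian variety carrying a polarization. Choosing a semicharacter $\gamma$ of $H$ whose restriction to $L_0$ is trivial (possible because $L=L_0\oplus L_1$ and $E\bigm|_{L_0}=0$, so $\gamma\bigm|_{L_0}$ can be cancelled by a genuine character of $L$), the bundle $\mathcal{L}(H,\gamma)$ becomes the pullback along $X\to A$ of an ample line bundle on $A$, hence admits a nonzero holomorphic section $s$. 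By Remark \ref{section}(0) each power $\mathcal{L}(nH,\gamma^n)\cong\mathcal{L}(H,\gamma)^{\otimes n}$ then admits the nonzero section $s^{\otimes n}$. This is the step where the hypothesis on $H$ is genuinely used, and I expect it to be the main point to get right.

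With the section in hand the assembly is formal. By \eqref{tensorL} one has $\mathcal{L}(\HH,\alpha)\otimes\mathcal{L}(nH,\gamma^n)\cong\mathcal{L}(\HH+nH,\alpha\gamma^n)=\mathcal{L}(H_n,\alpha\gamma^n)$, so Remark \ref{section}(i), applied with the multiplier $\mathcal{L}(nH,\gamma^n)$, yields an isomorphism $\Bim(Y_{\mathcal{L}(\HH,\alpha)})\cong\Bim(Y_{\mathcal{L}(H_n,\alpha\gamma^n)})$ for every $n$. By Theorem \ref{embedP1} the theta group $\G(H_n,\alpha\gamma^n)=\mathbf{S}(\mathcal{L}(H_n,\alpha\gamma^n))$ embeds into $\Hol(Y_{\mathcal{L}(H_n,\alpha\gamma^n)})\subset\Bim(Y_{\mathcal{L}(H_n,\alpha\gamma^n)})$, so each such $\Bim$ contains a copy of $\G(H_n,\alpha\gamma^n)$.

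Finally I would invoke the pencil estimates. Because $\HH$ is dominated by $H$, Theorem \ref{pencilH} guarantees that for all but finitely many $n$ one has $\ker(H_n)=\ker(H)$, whence $H_n\ne 0$ and the canonical decomposition $L=L_0\oplus L_1$ is unchanged with $E_n\bigm|_{L_1}$ nondegenerate; thus Theorem \ref{JordanT} applies and the Jordan constant of $\G(H_n,\alpha\gamma^n)$ equals $\sqrt{\#(L_{1,E_n}^{\perp}/L_1)}$. By Remark \ref{quotientPerp} this is $\sqrt{f_{H,\HH}(n)}$, which tends to infinity since $f_{H,\HH}(T)$ has positive degree $2g-2g_0$ (as $H\ne 0$ forces $g>g_0$). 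Hence $\Bim(Y_{\mathcal{L}(\HH,\alpha)})$ contains subgroups of arbitrarily large Jordan constant and so is not Jordan.
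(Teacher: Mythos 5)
Your proposal is correct and follows the paper's own strategy step for step: tensor $\mathcal{L}(\HH,\alpha)$ with powers of a section-admitting bundle whose Hermitian form is (a positive multiple of) $H$, use Remark \ref{section}(i) to identify $\Bim(Y_{\mathcal{L}(\HH,\alpha)})$ with $\Bim(Y_{\mathcal{L}(H_n,\cdot)})$, embed the theta group $\G(H_n,\cdot)$ via Theorem \ref{embedP1}, and let Theorem \ref{pencilH}, Theorem \ref{JordanT} and Remark \ref{quotientPerp} drive the Jordan constants $\sqrt{f_{H,\HH}(n)}$ to infinity. The one place you diverge is the section-existence step: the paper replaces $H$ by $2H$ so that the trivial semicharacter $\alpha_0$ becomes admissible and then quotes Kempf's Theorem~2.1 for semi-positive forms, whereas you keep $H$, normalize the semicharacter to be trivial on $L_0$ (legitimate: semicharacters for $H$ form a torsor under $\Hom(L,\UU(1))$, and $\gamma\bigm|_{L_0}$ is a genuine character by Remark \ref{alphaL0}, so it extends to $L$ via the splitting $L=L_0\oplus L_1$ and can be cancelled), and then realize $\mathcal{L}(H,\gamma)$ as the pullback of an ample bundle on the abelian variety $X/(\ker(H)/L_0)$. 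Both routes are valid; yours trades the doubling of $H$ for a short detour through the quotient torus, and you also read the pencil the right way around ($H_n=\HH+nH$, as actually used in the proof of Theorem \ref{pencilH}).
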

 
 \begin{proof}
 Replacing $H$ by $2H$, we may and will assume that its imaginary part $E$ satisfies
 $E(L,L)\subset 2\Z$.
 Then $(H,\alpha_0)$ is an A.-H. data. Since $H$ is semi-positive,
 it follows from \cite[Th. 2.1 on p. 9]{Kempf}
 that the holomorphic line bundle
 $\mathcal{L}(\HH,\alpha_0)$ admits a {\sl nonzero} holomorphic section.  Since for all positive integers $n$
 $$H_n=\HH+n H,  \alpha=\alpha\cdot \alpha_0^n,$$
 we obtain that
 $$\mathcal{L}(H_n,\alpha)\cong\mathcal{L}(\HH,\alpha)\otimes \mathcal{L}(H,\alpha_0)^n.$$
 It follows from Remark \ref{section} that the groups $\Bim(Y_{\mathcal{L}(\HH,\alpha)})$ and
 $\Bim(Y_{\mathcal{L}(H_n,\alpha)})$ are isomorphic.
 On the other hand,  by Theorem \ref{embedP1}, $\Aut(Y_{\mathcal{L}(H_n,\alpha)})$ contains a subgroup isomorphic to $\G(H_n,\alpha)$.
 In light of Theorems \ref{pencilH} and   Theorem \ref{JordanT} (applied to $(H_n,\alpha)$),  for all sufficiently large $n$
 the Jordan constant of $\G(H_n,\alpha))$ is  $\sqrt{\#(L_{1,E_n}^{\perp}/L_1)}$. It follows from Remark \ref{quotientPerp}
 that the Jordan constant of $\G(H_n,\alpha)$ tends to infinity when $n$ tends to infinity.  Since each  
 $$\G(H_n,\alpha) \hookrightarrow \Hol(Y_{\mathcal{L}(H_n,\alpha)}) \subset \Bim(Y_{\mathcal{L}(H_n,\alpha)})$$
 is isomorphic to a certain subgroup of $\Bim(Y_{\mathcal{L}(\HH,\alpha)})$, we conclude that the Jordan constant of 
 $\Bim(Y_{\mathcal{L}(\HH,\alpha)})$ is $\infty$, i.e., $\Bim(Y_{\mathcal{L}(\HH,\alpha)})$ is {\sl not} Jordan.
 \end{proof}

\section{Complex tori and abelian varieties}
\label{secondproof}
\begin{sect}
\label{AV}
A complex abelian variety $A$ of positive dimension  is a complex torus $W/\Gamma$ where $W$ is a $\C$-vector space of finite positive dimension
and $\Gamma\subset W$ is a discrete additive group of maximal rank $2\dim_{\C}(W)$. In addition, there exists a {\sl polarization},
i.e., a {\sl positive-definite} Hermitian form
$$H_A:W \times W \to \C$$
such that 
$$\IM(H_A(\gamma_1,\gamma_2))\in \Z \ \forall \gamma_1,\gamma_2 \in \Gamma.$$
\end{sect}
\begin{proof}[Proof of Theorem \ref{manP2}]
Every surjective holomorphic homomorphism $\psi:X \to A$ is induced by a certain surjective $\C$-linear map
$\bar{\psi}: V \to W$  such that  $\psi(L)\subset \Gamma$
in the sense that
$$\psi(v+L)=\bar{\psi}(v)+\Gamma\in W/\Gamma=A \ \forall v+L\in V/L=X.$$
Every holomorphic line bundle $\mathcal{M}$ on $A$ is isomorphic to $\mathcal{L}(\HH_A,\beta)$
for a certain  A.-H. data  $(\HH_A,\beta)$ where the Hermitian form
$$\HH_A: W \times W\to \C$$ satisfies
$$\EE_A(\gamma_1,\gamma_2):=\IM(\HH_A(\gamma_1,\gamma_2))\in \Z \ \forall \gamma_1,\gamma_2 \in \Gamma$$
and the map $\beta:\Gamma \to \UU(1)$ satisfies
$$\beta(\gamma_1+\gamma_2)=(-1)^{\EE_A(\gamma_1,\gamma_2)}\beta(\gamma_2)\beta(\gamma_1) \ \forall \gamma_1,\gamma_2 \in \Gamma.$$
In addition, it follows from \cite[Lemma 2.3.4 on p. 33]{BLA} that the induced holomorphic line bundle $\psi^{*}\mathcal{M}$ on $X$ is isomorphic to 
$\mathcal{L}(\HH,\alpha_1)$ where
$$\HH:V \times V \to \C,  \ \HH(v_1,v_2)= \HH_A(\bar{\psi}(v_1),\bar{\psi}(v_2));$$
$$\alpha_1: L \to \UU(1), \ \alpha_1(l)=\beta(\bar{\psi}(l)).$$
Clearly,
$$\ker(\bar{\psi})\subset \ker(\HH)\subset V.$$
Let us choose a  polarization $H_A$ on $A$
 and consider the  Hermitian form
$$H: V \times V \to \C,  \ H(v_1,v_2)= H_A(\bar{\psi}(v_1),\bar{\psi}(v_2)).$$
Clearly, $H\ne 0$, it is semi-positive and for all $l_1,l_2\in L$
$$\IM(H(l_1,l_2))=\IM(H_A(\bar{\psi}(l_1), \bar{\psi}(l_2))\in \Z,$$
because $\bar{\psi}(l_1), \bar{\psi}(l_2)\in \Gamma$. On the other hand, since $H_A$ is positive and therefore nondegenerate,
$\ker(H)=\ker(\bar{\psi})$. This implies that 
$\ker(H)\subset  \ker(\HH)$,
 i.e., $\HH$ is {\sl dominated} by $H$.
It follows from Theorem \ref{JordanP} that the group $\Bim(Y_{\mathcal{L}(\HH,\alpha)})$ is {\sl not} Jordan
for every holomorphic line bundle $\mathcal{L}(\HH,\alpha)$ where $\alpha: L \to \UU(1)$ is any map such that
$(\HH,\alpha)$ is an A.-H. data.  On the other hand, every  holomorphic line bundle on $X$  that is isomorphic to
$\mathcal{L}(\HH,\alpha_1)\otimes \mathcal{F}_0$ with  $\mathcal{F}_0\in \Pic^0(X)$ is isomorphic to 
$\mathcal{L}(\HH,\alpha)$ for suitable $\alpha$. In order to finish the proof, one has only to recall that 
$\mathcal{L}(\HH,\alpha_1)$ is isomorphic to  $\psi^{*}\mathcal{M}$.
\end{proof}

\begin{proof}[Proof of Theorem \ref{manP3}]
A nonzero complex subtorus $X_0\subset X$  and the quotient $A=X/X_0$ admit the following description.
There exists a nonzero $\C$-vector subspace $U\subset V$ such that $L_U=L\bigcap U$ is a  lattice
of rank $2\dim_{\C}(W)$ in $U$, the quotient $L/L_U$ is a  lattice of rank $2\dim_{\C}(V/U)$ in the nonzero $\C$-vector space $W:=V/U$ and
$$X_0=U/L_U\subset V/L=X, \ A=(V/U)/(L/L_U)=W/\Gamma$$
where 
$$ \Gamma:=L/L_U\subset V/U=W.$$
We may assume that $\mathcal{F}=\mathcal{L}(\HH,\alpha)$ for a certain A.-H. date $(\HH,\alpha)$ on $X$ where $\HH$ is an Hermitian form
$$\HH: V \times V \to \C,$$
whose imaginary part
$$\EE:=\IM(\HH): V \times V \to \R$$
is integer valued on $L\times L$. The   restriction of $\mathcal{F}$ to $X_0$ lies in $\Pic^0(X_0)$. It follows from \cite[Lemma 2.3.4 on p. 33]{BLA}
 that
$$\HH(U,U)=\{0\}.$$
This implies that  $\HH$ induces the biadditive form
$$S: U\times W(=V/U) \to \C, \ S(u,v+U)=\HH(u,v)$$
such that
$$S(\lambda u,w)=\lambda S(u,w), \  S(u,\lambda w)=\bar{\lambda} \cdot S(u,w)$$
for all $u\in U, w\in W, \lambda\in \C$. In addition,
\begin{equation}
\label{ImS}
\IM(S(l,\gamma))\in \Z \ \forall l\in L_U\subset U, \gamma \in \Gamma\subset W.
\end{equation}
Clearly, 
\begin{equation}
\label{HkerH}
S=0 \ \text{ if and only if } U\subset \ker(\HH).
\end{equation}
Let us consider the $\dim_{\C}(W)$-dimensional $\C$-vector space $\Hom_{\text{anti-lin}}(W,\C)$ of  $\C$-antilinear
maps
$$h: W \to \C, \  h(w_1+w_2)=h(w_1)+h(w_2), h(\lambda w)=\bar{\lambda}\cdot h(w) \ \forall w_1,w_2,w\in W, \lambda\in \C$$
and the  lattice
$$\Gamma_{\text{anti-lin}}:=\{h\in \Hom_{\text{anti-lin}}(W,\C)\mid \IM(h(\gamma))\in \Z \ \forall \gamma \in \Z\}\subset \Hom_{\text{anti-lin}}(W,\C)$$
of  rank $2\dim_{\C}(W)$.
The form $S$ defines the $\C$-linear homomorphism of vector spaces
$$a_S: U \to \Hom_{\text{anti-lin}}(W,\C),  \ u \mapsto \{w \mapsto S(u,w)\}.$$
Clearly, 
\begin{equation}
\label{SaS}
S=0 \ \text{ if and only if } a_S=0.
\end{equation}
In light of \eqref{ImS}, 
$a_U(L_U)\subset \Gamma_{\text{anti-lin}}$.
This implies that $a_S$ induces a holomorphic homomorphism of complex tori
$$b_S:U/L_U \to \Hom_{\text{anti-lin}}(W,\C)/\Gamma_{\text{anti-lin}}, \ u+L_U \mapsto a_S(u)+\Gamma_{\text{anti-lin}}.$$
Recall that $U/L_U=X_0$ and   $W/\Gamma$ is our complex abelian variety $A$. It is proven in  \cite[Sect. 3]{Mumford} that
$\Hom_{\text{anti-lin}}(W,\C)/\Gamma_{\text{anti-lin}}$ is the {\sl dual abelian variety} $\hat{A}$ of $A$.  We are given that
$\Hom(X_0,A)=\{0\}$. Since every abelian variety and its dual are isogenous, $\Hom(X_0,\hat{A})=\{0\}$ as well. It follows that
$b_S=0$. This means that  the $\C$-vector subspace  $a_S(U)$ lies in the  lattice $\Gamma_{\text{anti-lin}}$ and therefore $a_S=0$. By \eqref{SaS}, $S=0$.
Now it follows from \eqref{HkerH} that 
$U \subset \ker(\HH)$.
This implies that there is an Hermitian form 
$\HH_A: W \times W \to \C$
on $W =V/U$ such that
\begin{equation}
\label{HHA}
\HH_A(v_1+U, v_2+U)=\HH(v_1,v_2) \ \forall \ v_1, v_2\in V; \ v_1+U, v_2+U \in V/U=W.
\end{equation}
We have
$$\IM\left(\HH_A(l_1+L_U, l_2+L_U)\right)=\IM(\HH(l_1, l_2))\in \Z \ \forall
l_1,l_2\in L; l_1+L_U, l_2+L_U \in  L/L_U=\Gamma.$$
By \cite[Sect. 1.4, Lemma 1.6]{Kempf}, there exists a map $\beta:\Gamma \to \UU(1)$ such  that
$(\HH_A,\beta)$ is an A.-H. data on $A$. Let $\LL(\HH_A,\beta)$ be the corresponding holomorphic line bundle
on $A$.  The {\sl inverse image} $\psi^{*}\LL(\HH_A,\beta)$ on $X$ is a holomorphic line bundle on $X$ that
is isomorphic to some $\LL(\HH^{\prime},\alpha^{\prime})$.  It follows from \cite[Lemma 2.3.4 on p. 33]{BLA} that
 the Hermitian form $\HH^{\prime}$ on $V$ and the map $\alpha^{\prime}:L\to \UU(1)$ are as follows:
\begin{equation}
\label{Hprime}
\HH^{\prime}(v_1,v_2)=\HH_A(v_1+U,v_2+U) \ \forall v_1,v_2\in U.
\end{equation}
$$\alpha^{\prime}(l)=\beta(l+L_U) \ \forall l\in L.$$
It follows from \eqref{Hprime} and \eqref{HHA} that $\HH^{\prime}=\HH$. This means that $\psi^{*}\LL(\HH_A,\beta)$
is isomorphic to $\LL(\HH,\alpha_1)$. Since $\mathcal{F}=\mathcal{L}(H,\alpha)$, it is isomorphic
to $\psi^{*}\LL(\HH_A,\beta)\otimes \mathcal{F}_0$ where 
$\mathcal{F}_0=\LL(0, \alpha/\alpha_1) \in \Pic^0(X)$.
Now the desired result follows from Theorem \ref{manP3}.
\end{proof}

\section{$\Pic^0$ and theta groups}
\label{Pic0}
In this section we revisit theta groups that correspond to 
the case $H=0$. 
The main idea is to identify the theta group of a line bundle from $\Pic^{0}$ and the total space of the bundle with zero section removed.
(See \cite{Zarhin,ZarhinTG}  where the case  of abelian varieties was discussed.)

Recall  that a holomorphic line bundle $\LL$ over $X$ lies in  $\Pic^0(X)$ if the corresponding Hermitian form $H$ is zero, i.e., $\LL\cong L(0,\alpha)$. 
If this is the case then $$\alpha:L \to \UU(1)\subset \C^{*}$$ is a {\sl group homomorphism} and $\LL(0,\alpha)$ is the quotient
of the direct product $V \times \C$ modulo the following action of $L$.
$$(v,c) \mapsto (v+l, \alpha(l) c) \ \forall l \in L; \ v \in V, c\in \C.$$
On the other hand, the $\C^{*}$-bundle $\LL(0,\alpha)^{*}$ over $X$ obtained from $\LL(0,\alpha)$
by removing zero section may be viewed as the quotient $\left(V\times \C^{*}\right)/\tilde{L}$ of the {\bf commutative complex  Lie group}
$V\times \C^{*}$ by its {\bf discrete subgroup} 
$$\tilde{L}:=\{(l,\alpha(l))\mid l\in L\}\subset L \times \C^{*}.$$
In particular, $\LL(0,\alpha)^{*}$ carries the natural structure of a commutative complex Lie group. It is included in the short exact sequence of commutative complex Lie groups
$$1 \to \C^{*} \to \LL(0,\alpha)^{*} \to (V/L=) X \to 0.$$
Notice that the natural  faithful action of $V\times \C^{*}$ on $V \times \C$ descends to the faithful action
of $\LL(0,\alpha)^{*}$ on $\LL(0,\alpha)$, so one may view $\LL(0,\alpha)^{*}$ as a subgroup of 
$\Hol(\LL(0,\alpha))$. 
\begin{rem}
\label{scalar}
Clearly,  
$\LL(0,\alpha)^{*}\subset \mathbf{S}(\LL(0,\alpha))\subset \Hol(\LL(0,\alpha))$ and
for each $c\in \C^{*}$ 
$$(0,c)\tilde{L}\in \LL(0,\alpha)^{*}\subset \mathbf{S}(\LL(0, \alpha))\subset \Hol(\LL(0,\alpha))$$
acts as multiplication by $c$ in all fibers of $\LL(0,\alpha)\to X$.
\end{rem}

\begin{thm}
\label{main}
$\LL(\alpha,0)^{*}= \mathbf{S}(\LL(0, \alpha))$. In particular, $\mathbf{S}(\LL(0,\alpha))$ is commutative.
\end{thm}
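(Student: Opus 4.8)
The plan is to deduce this from the general identification of theta groups carried out in Section \ref{nonzeroH}, specialized to the case $H=0$. By Theorem \ref{thetaS} we already know that $\mathbf{S}(\LL(0,\alpha))=\G(0,\alpha)$, so the entire content of the statement is to recognize the abstractly defined theta group $\G(0,\alpha)$ as the concrete object $\LL(0,\alpha)^{*}$ introduced at the beginning of this section. To this end I would first unwind the definition $\G(0,\alpha)=\tilde{\G}(0,L_E^{\perp})/\tilde{L}$ when $H=0$: then $E=\IM(H)=0$, so $L_E^{\perp}=V=\ker(H)$ and hence $\tilde{\G}(0,L_E^{\perp})=\tilde{\G}(0,V)=\tilde{\G}(0,\ker(H))$. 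By Example \ref{kerHnice} the isomorphism $\Psi_0$ identifies this group with the ordinary direct product $V\times\C^{*}$, and by Remark \ref{realcomplexLie1}(ii) (with $H=0$) its action on $V_{\LB}=V\times\C$ is simply $(u,\lambda)\cdot(v,c)=(v+u,\lambda c)$, i.e. translation by $u$ in $V$ together with scaling by $\lambda$ in $\C$. This is exactly the natural faithful action of $V\times\C^{*}$ on $V\times\C$ recalled at the start of this section.

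Next I would match the discrete subgroups. Since $H=0$ one has $\AA_{0,l}(v,c)=(v+l,\alpha(l)c)$, so under $\Psi_0^{-1}$ the subgroup $\tilde{L}=\AA^{L}(L)$ corresponds precisely to $\{(l,\alpha(l))\mid l\in L\}\subset V\times\C^{*}$, which is the subgroup also denoted $\tilde{L}$ in the present section. Passing to quotients, the action of $\tilde{\G}(0,V)$ on $V_{\LB}$ descends to the action of $(V\times\C^{*})/\tilde{L}=\LL(0,\alpha)^{*}$ on $V_{\LB}/\tilde{L}=\LL(0,\alpha)$, and these two actions coincide as subgroups of $\Hol(\LL(0,\alpha))$. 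Therefore $\G(0,\alpha)=\LL(0,\alpha)^{*}$ inside $\Hol(\LL(0,\alpha))$, and combined with Theorem \ref{thetaS} this gives $\mathbf{S}(\LL(0,\alpha))=\LL(0,\alpha)^{*}$. The commutativity is then immediate, since $\LL(0,\alpha)^{*}$ is a quotient of the commutative complex Lie group $V\times\C^{*}$ (it also follows from Theorem \ref{theta0}).

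I expect no serious obstacle here: the argument is essentially a careful unwinding of the definitions, and the only point requiring attention is the bookkeeping that the two groups labelled $\tilde{L}$ (one as $\AA^{L}(L)$, the other as $\{(l,\alpha(l))\}$) genuinely agree, together with the compatibility of the two descriptions of the fibrewise action on the total space. As a self-contained alternative that avoids Theorem \ref{thetaS}, one could instead prove the inclusion $\mathbf{S}(\LL(0,\alpha))\subseteq\LL(0,\alpha)^{*}$ directly: given $\mathfrak{u}\in\mathbf{S}(\LL(0,\alpha))$ lifting $T_x$ with $x\in K(\LL(0,\alpha))=X$, pick $u\in V$ with $u+L=x$, compose $\mathfrak{u}$ with the inverse of the descended translation by $u$ to obtain an automorphism fixing the base and acting fibrewise linearly, and then invoke the compactness and connectedness argument from the proof of Theorem \ref{thetaS} to see that this fibre scalar is a single constant $\lambda\in\C^{*}$; the reverse inclusion is Remark \ref{scalar}.
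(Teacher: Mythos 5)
Your proposal is correct, but your primary route differs from the one the paper actually takes. The paper's proof of Theorem \ref{main} is a direct, self-contained argument: given $\mathfrak{u}\in\mathbf{S}(\LL(0,\alpha))$ lifting $T_y$, it chooses the explicit lift $\tilde{y}=(v,1)\tilde{L}\in\LL(0,\alpha)^{*}$ of $T_y$, observes that $\mathfrak{u}\tilde{y}^{-1}$ preserves every fiber and acts on it linearly, and uses compactness and connectedness of $X$ to conclude that the resulting nonvanishing holomorphic function on $X$ is a constant $c\in\C^{*}$, whence $\mathfrak{u}\tilde{y}^{-1}\in\LL(0,\alpha)^{*}$; the reverse inclusion is Remark \ref{scalar}. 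This is exactly the ``self-contained alternative'' you sketch in your last paragraph. Your main argument instead quotes Theorem \ref{thetaS} to get $\mathbf{S}(\LL(0,\alpha))=\G(0,\alpha)$ and then identifies $\G(0,\alpha)=\tilde{\G}(0,L_E^{\perp})/\tilde{L}$ with $(V\times\C^{*})/\{(l,\alpha(l))\}=\LL(0,\alpha)^{*}$ by unwinding the definitions at $H=0$ (so $E=0$, $L_E^{\perp}=V=\ker(H)$, $\AA_{0,l}(v,c)=(v+l,\alpha(l)c)$); this bookkeeping is all correct, including the crucial check that the two groups called $\tilde{L}$ coincide under $\Psi_0$ and that the descended actions agree inside $\Hol(\LL(0,\alpha))$. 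The trade-off: your route is shorter once Theorem \ref{thetaS} is granted and makes the structural point that Theorem \ref{main} is literally the $H=0$ instance of the general identification $\mathbf{S}(\LL(H,\alpha))=\G(H,\alpha)$, whereas the paper's direct argument keeps Section \ref{Pic0} logically independent of the theta-group machinery (consistent with its stated aim of giving \emph{another} proof of commutativity); note also that the compactness-and-connectedness step you would need for the direct inclusion is the same one hiding inside the proof of Theorem \ref{thetaS}, so the two routes ultimately rest on the same analytic fact.
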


\begin{proof}
Let $\mathfrak{u} \in \mathbf{S}(\LL(0, \alpha))$.  Then there is $y \in X$ such that $\mathfrak{u}$ is a lifting of $T_y$.
Choose $\tilde{y} \in \LL(0,\alpha)^{*}$ that lifts $T_y$ as well. For example, take $v \in V$
such that $y=V+L$ and put
$$\tilde{y}=(v,1)\tilde{L}\in \left(V\times \C^{*}\right)/\tilde{L}.$$
Then $\mathfrak{u}\tilde{y}^{-1}$ is an automorphism of $\LL(0,\alpha)$ that sends every fiber of $\LL(0,\alpha)\to X$ into itself
and acts on each  such fiber as a $\C$-linear automorphism. This means that there is a holomorphic function 
$f$ on $X$  that does not vanish and such that $\mathfrak{u}\tilde{y}^{-1}$ acts on the fiber $\LL(0, \alpha)_z$
as the multiplication by $f(z)\in \C^{*}$ for all  $z\in X$. The compactness and connectedness of $X$ implies that there is $c \in \C^{*}$
such that $f(z)=c$ for all $z\in X$. It follows from Remark \ref{scalar}(ii) that
$u\tilde{y}^{-1}\in \LL(0,\alpha)^{*}$. Since $\tilde{y}\in \LL(0,\alpha)^{*}$, we have
$\mathfrak{u}=\left(\mathfrak{u}\tilde{y}^{-1}\right)\tilde{y}\in  \LL(0,\alpha)^{*}$.
This completes the proof.
\end{proof}


\end{document}